\documentclass{article}
\usepackage{hyperref,graphicx}
\usepackage{authblk}
\usepackage{comment}
\usepackage{microtype}
\usepackage{fullpage}
\usepackage{url}
\usepackage{amsfonts,amscd,amsmath,amssymb,amsthm}
\usepackage[english]{babel}
\usepackage{mathtools}
\usepackage{bbm}

\newcommand{\R}{\mathbb{R}}

\newcommand{\Q}{\mathbb{Q}}
\newcommand{\E}{\mathbb{E}}

\renewcommand{\P}{\mathbb{P}}

\newcommand{\calG}{G}

\newcommand{\calA}{\mathcal{A}}

\newcommand{\normal}{\mathcal{N}}

\newcommand{\pdiffII}[3]{\ifstrequal{#2}{#3}
{\frac{\partial^2 #1}{\partial #2^2}}
{\frac{\partial^2 #1}{\partial #2 \partial #3}}
}
\newcommand{\diffII}[3]{\ifthenelse{\equal{#2}{#3}}
{\frac{d^2 #1}{d #2^2}}
{\frac{d^2 #1}{d #2 d #3}}
}

\newcommand{\toD}{\stackrel{d}{\to}}
\let\Pr\relax
\DeclareMathOperator{\Pr}{Pr}
\DeclareMathOperator{\Pois}{Pois}

\DeclareMathOperator{\Var}{Var}
\DeclareMathOperator{\Cov}{Cov}

\DeclareMathOperator{\tr}{tr}

\let\vec\relax
\DeclareMathOperator{\vec}{vec}
\DeclareMathOperator{\Multinom}{Multinom}
\DeclareMathOperator{\Normal}{\mathcal{N}}
\DeclareMathOperator{\diag}{diag}
\newcommand{\prob}[1]{\mathbb{P}_n\left(#1\right)}
\newcommand{\probcond}[1]{\hat{\mathbb{P}}_n\left(#1\right)}

\newcommand{\expec}[1]{\mathbb{E}\left(#1\right)}
\newcommand{\probER}[1]{\mathbb{Q}_n\left(#1\right)}
\newcommand{\abs}[1]{\left|#1\right|}
\newcommand{\sigtil}{\widetilde{\sigma}}
\newcommand{\tautil}{\widetilde{\tau}}
\newcommand{\Ntil}{\widetilde{N}}
\newcommand{\Xtil}{\widetilde{X}}
\newcommand{\xitil}{\widetilde{\xi}}

\newcommand{\bigoh}[1]{O\left(#1\right)}
\newcommand{\trans}[1]{{#1}^{\intercal}}
\newcommand{\defas}{:=}
\newcommand{\eye}{\mathbb{I}}
\newcommand{\1}{\mathbf{1}}
\newcommand{\indicator}[1]{\mathbbm{1}_{\{#1\}}}
\newcommand{\cin}{{c_{\mathrm{in}}}}
\newcommand{\cout}{{c_{\mathrm{out}}}}
\newcommand{\ein}{{m_{\mathrm{in}}}}
\newcommand{\eout}{{m_{\mathrm{out}}}}
\newcommand{\mein}{{\overline{m}_{\mathrm{in}}}}
\newcommand{\meout}{{\overline{m}_{\mathrm{out}}}}
\newcommand{\mm}{\overline{m}}
\newcommand{\J}{\mathbb{J}}
\newcommand{\e}{\mathrm{e}}

\newcommand{\ER}{Erd\H{o}s-R\'{e}nyi}
\newcommand{\SBM}{\text{SBM}}

\newcommand{\connectivity}{T}
\newcommand{\overlap}{\alpha}

\newcommand{\dcond}{d_{\mathrm{c}}}
\newcommand{\dcondupper}{d_{\mathrm{c}}^{\mathrm{upper}}}
\newcommand{\dcondlower}{d_{\mathrm{c}}^{\mathrm{lower}}}
\newcommand{\avgdeg}{d}
\newcommand{\Bin}{\mathrm{Bin}}

\DeclareMathOperator{\Poisson}{Poisson}
\DeclareMathOperator{\olap}{overlap}
\DeclareMathOperator{\Tr}{tr}

\newtheorem{definition}{Definition}
\newtheorem{theorem}{Theorem}
\newtheorem{lemma}{Lemma}

\newtheorem{proposition}{Proposition}


\begin{document}
\title{Information-theoretic thresholds for community detection in sparse networks}

\author[1]{Jess Banks \thanks{banks.jess.m@gmail.com}}
\author[1]{Cristopher Moore \thanks{moore@santafe.edu}}
\author[2,3]{Joe Neeman \thanks{joeneeman@gmail.com}}
\author[4]{Praneeth Netrapalli \thanks{praneeth@microsoft.com}}

\affil[1]{Santa Fe Institute, Santa Fe NM}
\affil[2]{Institute of Applied Mathematics, University of Bonn, Bonn, Germany}
\affil[3]{Mathematics Department, University of Texas, Austin TX}
\affil[4]{Microsoft Research, Cambridge MA}
\maketitle
\begin{abstract}
We give upper and lower bounds on the information-theoretic threshold for
community detection in the stochastic block model.  Specifically, consider a
symmetric stochastic block model with $q$ groups, average degree $d$, and
connection probabilities $\cin/n$ and $\cout/n$ for within-group and
between-group edges respectively; let $\lambda = (\cin-\cout)/(q\avgdeg)$.  We
show that, when $q$ is large, and $\lambda = O(1/q)$, the critical value of $d$
at which community detection becomes possible---in physical terms, the
condensation threshold---is
\[
\dcond = \Theta\!\left( \frac{\log q}{q \lambda^2} \right) \, , 
\]
with tighter results in certain regimes.  Above this threshold, we show that
any partition of the nodes into $q$ groups which is as `good' as the planted one,
in terms of the number of within- and between-group edges, is correlated with it.
This gives an exponential-time algorithm that performs better than chance; 
specifically, community detection becomes possible below the Kesten-Stigum bound 
for $q \ge 5$ in the disassortative case $\lambda < 0$, and for $q \ge 11$ in the 
assortative case $\lambda > 0$  
(similar upper bounds were obtained independently by Abbe and Sandon).  
Conversely, below this threshold, we show that no algorithm can label the vertices better 
than chance, or even distinguish the block model from an \ER\ random graph with 
high probability.

Our lower bound on $\dcond$ uses Robinson and Wormald's small subgraph conditioning method,
and we also give (less explicit) results for non-symmetric stochastic block
models.  In the symmetric case, we obtain explicit results by using bounds
on certain functions of doubly stochastic matrices due to Achlioptas and Naor;
indeed, our lower bound on $\dcond$ is their second moment lower bound on the
$q$-colorability threshold for random graphs with a certain effective
degree.
\end{abstract}

 \section{Introduction}
The Stochastic Block Model (SBM) is a random graph ensemble with planted
community structure, where the probability of a connection between each pair of
vertices is a function only of the groups or communities to which they belong.
It was originally invented in sociology (\cite{HLL83}); it was reinvented in
physics and mathematics under the name ``inhomogeneous random
graph'' (\cite{Soderberg02,BJR07}), and in computer science as the planted
partition problem (e.g.~\cite{mcsherry}).  

Given the current interest in network science, the block model and its variants
have become popular parametric models for the detection of community structure.
An interesting set of questions arise when we ask to what extent the
communities, i.e., the labels describing the vertices' group memberships, can
be recovered from the graph it generates.  In the case where the average degree
grows as $\log n$, if the structure is sufficiently strong then the underlying
communities can be recovered (\cite{BC09}), and the threshold at which this
becomes possible has recently been
determined (\cite{abbe-bandeira-hall,abbe-sandon,agarwal-etal}).  Above this
threshold, efficient algorithms exist that recover the communities exactly,
labeling every vertex correctly with high probability; below this threshold,
exact recovery is information-theoretically impossible. 

In the sparse case where the average degree is $O(1)$, finding the communities
is more difficult, since we effectively have only a constant amount of
information about each vertex.  In this regime, our goal is to label the
vertices better than chance, i.e., to find a partition with nonzero correlation
or mutual information with the ground truth.  This is sometimes called the
\emph{detection} problem to distinguish it from exact recovery.  A set of phase
transitions for this problem was conjectured in the statistical physics
literature based on tools from spin glass theory (\cite{DKMZ11a,DKMZ11b}).  Some of these conjectures
have been made rigorous, while others remain as tantalizing open problems.

Besides the detection problem, it is natural to ask whether a graph generated by the stochastic block model
can be distinguished from an Erd\H{o}s-R\'enyi random graph with the same average degree.
This is called the \emph{distinguishability} problem, and it is believed to have
the same threshold as the detection problem. Although distinguishing a graph from the stochastic
block model from an Erd\H{o}s-R\'enyi graph seems intuitively easier than actually detecting the
communities, we do not know any rigorous proof of this statement.

\subsection{The Kesten-Stigum bound, information-theoretic detection, and condensation}

Although we will also deal with non-symmetric stochastic block models, in this
discussion we focus on the symmetric case where the $q$ groups are of equal expected size, 
and the probability of edges between
vertices within and between groups are $\cin/n$ and $\cout/n$ respectively for constants $\cin, \cout$.  
The expected average degree of the resulting graph is then
\begin{equation}
\label{eq:avgdeg}
\avgdeg = \frac{\cin + (q-1)\cout}{q} \, . 
\end{equation}
It is convenient to parametrize the strength of the community structure as 
\begin{equation}
\label{eq:lambda}
\lambda = \frac{\cin-\cout}{q\avgdeg} \, .
\end{equation}
As we will see below, this is the second eigenvalue of a transition matrix describing how labels are ``transmitted'' between neighboring vertices.  It lies in the range
\[
-\frac{1}{q-1} \le \lambda \le 1 \, ,
\]
where $\lambda = -1/(q-1)$ corresponds to $\cin = 0$ (also known as the planted
graph coloring problem) and $\lambda = 1$ corresponds to $\cout = 0$
where vertices only connect to others in the same group.  We say that block
models with $\lambda > 0$ and $\lambda < 0$ are \emph{assortative} and
\emph{disassortative} respectively.

The conjecture of~\cite{DKMZ11a,DKMZ11b} is that efficient algorithms exist if and only if we are above the threshold
\begin{equation}
\label{eq:kesten-stigum}
d = \frac{1}{\lambda^2} \, .
\end{equation}
This is known in information theory as the Kesten-Stigum
threshold (\cite{Kesten1966,Kesten1966a}), and in physics as the Almeida-Thouless
line (\cite{AlmeidaThouless78}).

Above the Kesten-Stigum threshold, \cite{DKMZ11a,DKMZ11b} claimed that
community detection is computationally easy, and moreover that belief
propagation---also known in statistical physics as the cavity method---is
asymptotically optimal in that it maximizes the fraction of vertices labeled
correctly (up to a permutation of the groups).  For $q=2$, this was proved
in~\cite{MNS-colt}; very recently \cite{abbe-sandon-more-groups} showed
that a type of belief propagation performs better than chance for all $q$. In
addition,~\cite{bordenave-lelarge-massoulie} showed that a spectral clustering
algorithm based on the non-backtracking operator succeeds all the way down to
the Kesten-Stigum threshold (proving a conjecture of~\cite{Krzakala13}, who
introduced the algorithm).

What happens below the Kesten-Stigum threshold is more complicated.
\cite{DKMZ11a,DKMZ11b} conjectured that for sufficiently small $q$, community
detection is information-theoretically impossible when $d < 1/\lambda^2$.
\cite{MNS12} proved this in the case $q=2$: first, they showed that the
ensemble of graphs produced by the stochastic block model becomes
\emph{contiguous} with that produced by \ER\ graphs of the same average degree,
making it impossible even to tell whether or not communities exist with high
probability.  Secondly, by relating community detection to the Kesten-Stigum
reconstruction problem on trees (\cite{EKPS}), they showed that for
most pairs of vertices the probability, given the graph, that they are in the
same group asymptotically approaches $1/2$.  Thus it is impossible, even if we
could magically compute the true posterior probability distribution, to label
the vertices better than chance. 

On the other hand, \cite{DKMZ11a,DKMZ11b} conjectured that for sufficiently
large $q$, namely $q \ge 5$ in the assortative case $\cin > \cout$ and $q \ge
4$ in the disassortative case $\cin < \cout$, there is a ``hard but
detectable'' regime where community detection is information-theoretically
possible, but computationally hard.  One indication of this is the extreme case
where $\cin = 0$: this is equivalent to the planted graph coloring problem
where we choose a uniformly random coloring of the vertices, and then choose $dn/2$ edges
uniformly from all pairs of vertices with different colors.  In this case, we
have $\lambda = -1/(q-1)$ and~\eqref{eq:kesten-stigum} becomes $d > (q-1)^2$.
However, while graphs generated by this case of the block model are
$q$-colorable by definition, the $q$-colorability threshold for \ER\ graphs
grows as $2q \ln q$ (\cite{achlioptas-naor}), and falls below the Kesten-Stigum
threshold for $q \ge 5$.  In between these two thresholds, we can at least
distinguish the two graph ensembles by asking whether a $q$-coloring exists;
however, finding one might take exponential time. 

More generally, planted ensembles where some combinatorial structure is built
into the graph, and un-planted ensembles such as \ER\ graphs where these
structures occur by chance, are believed to become distinguishable at a phase transition called
\emph{condensation} (\cite{Krzakala2007a}).  Below this point, the two ensembles
are contiguous; above it, the posterior distribution of the partition or coloring conditioned 
on the graph---in physical terms, the Gibbs distribution---is
dominated by a cluster of states surrounding the planted state.  For instance,
in random constraint satisfaction problems, the uniform distribution on
solutions becomes dominated by those near the planted one; in our setting, the
posterior distribution of partitions becomes dominated by those close to the
ground truth (although, in the sparse case, with a Hamming distance that is
still linear in $n$).  Thus the condensation threshold is believed to be the threshold
for information-theoretic community detection.  Below it, even optimal Bayesian
inference will do no better than chance, while above it, typical partitions
chosen from the posterior will be fairly accurate (though finding these typical
partitions might take exponential time).  

We note that some previous results show that community detection is possible
below the Kesten-Stigum threshold when the sizes of the groups are
unequal (\cite{zhang-moore-newman}).  In addition, even a
vanishing amount of initial information can make community detection possible
if the number of groups grows with the size of the
network (\cite{kanade-mossel-schramm}).

\subsection{Our contribution}
\label{sec:intro-contribution}

We give rigorous upper and lower bounds on the condensation threshold.  Our
bounds are most explicit in the case of symmetric stochastic block models, in
which case we give upper and lower bounds for the condensation threshold
as a function of $q$ and
$\lambda$.  First, we use a first-moment argument to show that if 
\begin{equation}
\label{eq:d-upper}
d > \dcondupper 
= \frac{2 q \log q}
{(1+(q-1) \lambda) \log (1+(q-1) \lambda) 
+ (q-1)(1-\lambda) \log (1-\lambda)} \, ,
\end{equation}
then, with high probability, the only partitions that are as good as the
planted one---that is, which have the expected number of edges within and
between groups---have a nonzero correlation with the planted one.  As a result,
there is a simple exponential-time algorithm for labeling the vertices better
than chance: simply test all partitions, and output the first good one.  

We note that $\dcondupper < 1/\lambda^2$ for $q \ge 5$ when $\lambda$ is
sufficiently negative, including the case $\lambda = -1/(q-1)$ corresponding to
graph coloring discussed above.  Moreover, for $q \ge 11$, there also exist
positive values of $\lambda$ for $\dcondupper < 1/\lambda^2$.  Thus for
sufficiently large $q$, detectability is information-theoretically possible
below the Kesten-Stigum threshold, in both the assortative and disassortative
case.  Similar (and somewhat tighter) results were obtained independently
by~\cite{abbe-sandon-isit}.

We then show that community detection is information-theoretically impossible if
\begin{equation}
\label{eq:d-lower}
d < \dcondlower 
= \frac{2\log(q-1)}{q-1} \frac{1}{\lambda^2}
\, . 
\end{equation}
Using the small
subgraph conditioning method, we show that the block model and the \ER\
graph are contiguous whenever the second moment of the ratio between their
probabilities---roughly speaking, the number of good partitions in an \ER\
graph---is appropriately bounded.  We 	also show that this second moment bound
implies non-detectability, in that the posterior distribution on any finite
collection of vertices is asymptotically uniform.  This reduces the proof of
contiguity and non-detectability to a second moment argument; in the case of a
symmetric stochastic block model, this consists of maximizing a certain
function of doubly stochastic matrices.  

Happily, this latter problem was largely solved by \cite{achlioptas-naor}, who
used the second moment method to give nearly tight lower bounds on the
$q$-colorability threshold.  Our bound~\eqref{eq:d-lower} corresponds
to their lower bound on $q$-colorability for $G(n,\avgdeg'/n)$ where $\avgdeg'
= \avgdeg \lambda^2 (q-1)^2$.  Intuitively, $\avgdeg'$ is the degree of a
random graph in which the correlations between vertices in the $q$-colorability
problem are as strong as those in the stochastic block model with average degree $\avgdeg$
and eigenvalue $\lambda$.

Our bounds are tight in some regimes, and rather loose in others.  Let $\mu$ denote $(\cin-\cout)/\avgdeg$.  If $\mu$ is constant and $q$ is large, we have
\[
\lim_{q \to \infty} \frac{\dcondupper}{\dcondlower}
= \frac{\mu^2}{(1+\mu) \log (1+\mu) - \mu} \, .
\]
In the limit $\mu = -1$, corresponding to graph coloring, this ratio is $1$, inheriting the tightness of previous upper and lower bounds on $q$-colorability.  For other values of $\mu$, our bounds match up to a multiplicative constant.  In particular, when $q$ is constant and $|\lambda|$ is small, they are about a factor of $2$ apart:
\[
\frac{2\log(q-1)}{q-1} \le \dcond \lambda^2 \le \frac{4 \log q}{q-1} (1+O(q \lambda)) \, . 
\]
When $\lambda \ge 0$ is constant and $q$ is large, we have 
\[
\dcondupper = \frac{2}{\lambda} (1 + O(1/\log q)) \, .
\]
Thus, in the limit of large $q$, detectability is possible below the Kesten-Stigum threshold whenever $\lambda < 1/2$.

\section{Definitions and results}

A stochastic block model with $q \ge 2$ communities is parametrized by two quantities: the distribution
$\pi \in \Delta_{q}$ of vertex classes and the symmetric matrix $M \in \R^{q \times q}$ of edge
probabilities. Given these two parameters, a random graph from the block model
$\calG(n, M/n, \pi)$ is generated as follows: for each vertex $v$, sample
a label $\sigma_v$ in $[q] = \{1, \dots, q\}$ independently with distribution $\pi$. Then,
for each pair $(u, v)$, include the edge $(u, v)$ in the graph independently with probability
$n^{-1} M_{\sigma_u,\sigma_v}$. Since we will worq with a fixed $M$ and $\pi$ throughout,
we denote $\calG(n, M/n, \pi)$ by $\P_n$. Note that according to the preceding description,
we have the following explicit form for the density of $\P_n$:
\[
\P_n(G, \sigma) = \prod_{v \in V(G)} \pi_{\sigma_v}
\prod_{(u, v) \in E(G)} \frac{M_{\sigma_u,\sigma_v}}{n}
\prod_{(u, v) \not \in E(G)} \left(1 - \frac{M_{\sigma_u,\sigma_v}}{n}\right).
\]
We will assume throughout that every vertex in $G \sim \P_n$ has the same expected degree.
(In terms of $M$ and $\pi$, this means that $\sum_j M_{ij} \pi_j$ does not depend on $i$.)
Without this assumption, reconstruction and distinguishability -- at least in the way
that we will define them -- are trivial, since we gain non-trivial information on
the class of a vertex just by considering its degree.

With the preceding assumption in mind, let $d = \sum_j M_{ij} \pi_j$ be the expected
degree of an arbitrary vertex. In order to discuss distinguishability, we will compare
$\P_n$ with the Erd\H{o}s-R\'enyi distribution $\Q_n := \calG(n, d/n)$.

Throughout this work, we will make use of the matrix $T$ defined by
\[
T_{ij} = \frac 1d \pi_i M_{ij}, \label{eq:T-def}
\]
or in other words, $T = \frac 1d \diag(\pi) M$. Note that $T$ is a stochastic matrix, in the
sense that it has non-negative elements and all its rows sum to 1. The Perron-Frobenius eigenvectors
of $T$ are $\pi$ on the right, and $\1$ on the left (where $\1$ denotes the vector of ones),
and the corresponding eigenvalue is 1. We let $\lambda_1, \dots, \lambda_q$ be the eigenvalues of
$T$, arranged in order of decreasing absolute value (so that $\lambda_1 = 1$ and $|\lambda_2| \le 1$).
The second of these turns out to be the most important for us; therefore, set $\lambda = \lambda_2$.

There is an important probabilistic interpretation of the matrix $T$ relating to the local
structure of $G \sim \P_n$; although we will not rely on this interpretation
in the current work, it played an important role in~\cite{MoNeSl:13}. Indeed,
one can show that for any fixed radius $R$,
the $R$-neighborhood of a vertex in $G \sim \P_n$ has almost the same distribution as a Galton-Watson
tree with radius $R$ and offspring distribution $\Poisson(d)$. Then, the class labels on the
neighborhood can be generated by first choosing the label of the root according to $\pi$ and then,
conditioned on the root's label being $i$, choosing its children's labels independently to be
$j$ with probability $T_{ij}$. This procedure continues down the tree: any vertex with parent $u$
has probability $T_{\sigma_u j}$ to receive the label $j$. Thus, $T$ is the transition matrix
of a certain Markov process that describes a procedure for approximately generating the class
labels on a local neighborhood in $G$.

In part of this work, we will deal with the symmetric case, in which $\pi_i = \frac 1q$ for all $i$ and
\begin{equation}
\label{eq:c-cin-cout}
M_{i,j} = \begin{cases} \cin & \mbox{if $i=j$} \\ \cout & \mbox{if $i \ne j$} \, . 
\end{cases}
\end{equation}
In this case, the expected average degree is
\[
\avgdeg = \frac{\cin + (q-1)\cout}{q} \, ,
\]
the Markov transition matrix (which is symmetric, and hence doubly stochastic) is
\begin{equation}
	\connectivity
	= \frac{1}{q\avgdeg} \begin{pmatrix}
			\cin & {} & \cout \\
			{} & \ddots & {} \\
			\cout & {} & \cin
	\end{pmatrix}
	= \lambda \eye + (1-\lambda) \frac{\J}{q},\,
	\label{eq:connectivity}
\end{equation}
where $\eye$ is the identity matrix, $\J$ is the matrix of all $1$s, and where 
\[
\lambda = \frac{\cin-\cout}{q\avgdeg} 
\]
is $\connectivity$'s second eigenvalue.
We can think of $\lambda$ as the probability that information is transmitted
from $u$ to $v$: with probability $\lambda$ we copy $u$'s group label to $v$,
and with probability $1-\lambda$ we choose $v$'s group uniformly from $[q]$.  The
parameter $\lambda$ interpolates between the case $\lambda=1$ where all edges
are within-group, to an \ER\ graph where $\lambda = 0$ and edges are placed
uniformly at random, to $\lambda < 0$ where edges are more likely between
groups than within them.  This gives a useful reparametrization of the model in
terms of $c$ and $\lambda$, where
\begin{align}
	\cin &= \avgdeg (1 + (q-1) \lambda) \nonumber \\
	\cout &= \avgdeg (1 - \lambda) \, .
	\label{eq:reparam}
\end{align}

  For labellings $\sigma$ and $\tau$ in $[q]^n$, define their \emph{overlap} by
\[
\olap(\sigma, \tau) = \frac{1}{n}\max_\rho \sum_{i=1}^q \left(|\sigma^{-1}(i) \cap \tau^{-1}(\rho(i))| - \frac 1n |\sigma^{-1}(i)| |\tau^{-1} (\rho(i))| \right),
\]
where the supremum runs over all permutations $\rho$ of $[q]$. In words, $\sigma$ and $\tau$
have a positive overlap if there is some relabelling of $[q]$ so that they are positively correlated.

\begin{definition}
We say that the block model $\P_n = \calG(n, M/n, \pi)$ is \emph{detectable}
if there is some $\delta > 0$ and an algorithm $\calA$ mapping graphs to labellings
such that if $(G, \sigma) \sim \P_n$ then
\[
 \lim_{n \to \infty} \Pr(\olap(\calA(G), \sigma) > \delta) > 0.
\]
\end{definition}

\begin{definition}
We say that $\P_n$ and $\Q_n$ are \emph{asymptotically orthogonal} if there is a sequence $A_n$ of events such
that $\P_n(A_n) \to 0$ and $\Q_n(A_n) \to 1$.

We say that $\P_n$ and $\Q_n$ are \emph{contiguous} if for every sequence $A_n$ of events,
$\P_n(A_n) \to 0$ if and only if $\Q_n(A_n) \to 0$.
\end{definition}

Our main result is the following:
\begin{theorem}\label{thm:main}
  Consider the symmetric stochastic block model $\P_n$ with $q$ communities, average degree $\avgdeg$,
  and second-eigenvalue $\lambda$. Define
\begin{align}
\dcondupper 
&= \frac{2 q \log q}
{(1+(q-1) \lambda) \log (1+(q-1) \lambda) 
+ (q-1)(1-\lambda) \log (1-\lambda)} \\
\dcondlower 
&= \frac{2\log(q-1)}{q-1}\frac{1}{\lambda^2} \, .
\end{align}
If $\avgdeg > \dcondupper$ then $\P_n$ and $\Q_n$ are asymptotically orthogonal, and $\P_n$ is detectable.
If $\avgdeg < \dcondlower$ then $\P_n$ and $\Q_n$ are contiguous, and $\P_n$ is not detectable.
\end{theorem}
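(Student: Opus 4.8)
\noindent\textit{Overview.} The two regimes require different tools: a first-moment / union-bound argument above $\dcondupper$, and a second-moment argument, refined by small subgraph conditioning, below $\dcondlower$.

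\smallskip

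\noindent\textit{The regime $\avgdeg > \dcondupper$.} Call a partition $\tau$ of $[n]$ into $q$ nearly-equal groups \emph{good} if its numbers of within- and between-group edges each lie within $n^{2/3}$ of the planted expectations $\ein = \cin n/(2q)$ and $\eout = (q-1)\cout n/(2q)$. To obtain asymptotic orthogonality I would take $A_n = \{$no good partition exists$\}$. Under $\Q_n = \calG(n,\avgdeg/n)$ the $m = \avgdeg n/2$ edges form a uniform random set, so for a fixed $\tau$ the within-group edge count is hypergeometric with mean $m/q$; since the planted target fraction $\ein/m = (1+(q-1)\lambda)/q$ differs from $1/q$ whenever $\lambda\neq 0$, a large-deviation estimate gives $\Pr_{\Q_n}(\tau \text{ is good}) = \exp(-\tfrac{\avgdeg n}{2}\,D + o(n))$ with
\[
D = \tfrac{1+(q-1)\lambda}{q}\log\bigl(1+(q-1)\lambda\bigr) + \tfrac{(q-1)(1-\lambda)}{q}\log(1-\lambda),
\]
and multiplying by the $e^{n\log q + o(n)}$ choices of balanced $\tau$ shows $\E_{\Q_n}[\#\{\text{good }\tau\}]\to 0$ precisely when $\avgdeg > 2\log q / D = \dcondupper$. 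Hence $\Q_n(A_n)\to 1$ by Markov's inequality, whereas $\P_n(A_n)\to 0$ because the planted partition's edge counts concentrate within $O(\sqrt n)$ of their means. For detectability I would run the same first moment \emph{conditionally on $\sigma$}, organized by the $q\times q$ joint type between $\tau$ and $\sigma$: conditioned on $\sigma$ the edges remain independent, the within-$\tau$ edge count is a sum of independent binomials whose parameters are fixed by the joint type, and its large-deviation rate for hitting the planted target equals $D$ at the ``flat'' type $\tfrac1{q^2}\J$, where $\tau$ effectively sees an Erd\H{o}s--R\'enyi graph. A first-moment computation over this type then shows that, with high probability, no good $\tau$ has overlap below a suitable constant $\delta > 0$ with $\sigma$; consequently the brute-force algorithm that enumerates partitions and returns any good one achieves overlap at least $\delta$ with the planted labelling.

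\smallskip

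\noindent\textit{The regime $\avgdeg < \dcondlower$.} Here I would obtain contiguity of the graph marginals from Janson's small subgraph conditioning theorem applied to the likelihood ratio $X_n = \P_n(\cdot)/\Q_n(\cdot)$. Three inputs are needed. (i) Short-cycle asymptotics: under $\Q_n$ the number of $k$-cycles converges to $\Poisson(\mu_k)$ with $\mu_k = \avgdeg^k/(2k)$, and under $\P_n$ to $\Poisson(\mu_k(1+\delta_k))$ with $\delta_k = \tr(\connectivity^k) - 1 = \sum_{i\geq 2}\lambda_i^k$, using $\E[\prod_i M_{\sigma_i\sigma_{i+1}}] = \tr((\avgdeg\,\connectivity)^k)$. (ii) The summability condition $\sum_k \mu_k\delta_k^2 < \infty$, which holds since $|\delta_k|\leq (q-1)|\lambda|^k$ reduces it to $\sum_k \tfrac{(q-1)^2}{2k}(\avgdeg\lambda^2)^k$, finite because $\avgdeg < \dcondlower < 1/\lambda^2$. (iii) The second-moment identity $\E_{\Q_n}[X_n^2]\to \prod_{k\geq 3} e^{\mu_k\delta_k^2}$. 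Point (iii) is the heart of the matter. Expanding $X_n$ over planted labellings gives $\E_{\Q_n}[X_n^2] = \sum_{\sigma,\tau}\pi^{\otimes n}(\sigma)\pi^{\otimes n}(\tau)\prod_{u<v}(\text{per-edge factor})$, and the equal-expected-degree hypothesis collapses each per-edge factor to a function of the $q\times q$ joint type $\rho$ of $(\sigma,\tau)$, so that $\E_{\Q_n}[X_n^2] = \sum_\rho e^{n\Phi(\rho)+o(n)}$, where $\Phi(\rho)$ is the joint entropy of $\rho$ plus an explicit quadratic form in $\rho$ determined by $M$. By Laplace's method the sum is governed by $\max_\rho\Phi$, and one must show that for $\avgdeg<\dcondlower$ this maximum is attained only at the flat type $\rho=\tfrac1{q^2}\J$ --- i.e., where $\sigma$ and $\tau$ are independent --- with the Gaussian fluctuations there reproducing precisely the constant $\prod_{k\geq 3}e^{\mu_k\delta_k^2}$. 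For the symmetric model the quadratic form in $\Phi$ is, up to the entropy term, exactly the one analyzed by Achlioptas and Naor in the $q$-colourability second moment for $\calG(n,\avgdeg'/n)$ with effective degree $\avgdeg' = \avgdeg\lambda^2(q-1)^2$; the plan is then to invoke their optimization, whose validity range $\avgdeg' < 2(q-1)\log(q-1)$ is exactly $\avgdeg < \dcondlower$.

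\smallskip

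\noindent\textit{Non-detectability, and the main obstacle.} Finally, non-detectability follows by repeating the second-moment computation with the labels of a bounded number of vertices pinned and checking that this alters $\E_{\Q_n}[X_n^2]$ only by a factor $1+o(1)$; a now-standard argument (as in \cite{MNS12}) then shows that the posterior law of the labels of any fixed finite vertex set converges in $L^1$ to $\pi^{\otimes k}$, so that no estimator --- even the Bayes-optimal one --- beats chance. The main obstacle is part (iii): proving that $\Phi$ attains its maximum only at the flat type and extracting the exact multiplicative constant, and in particular recognizing the optimization as the one solved by Achlioptas and Naor at effective degree $\avgdeg'$. By contrast, the cycle-count bookkeeping (i)--(ii) and the first-moment arguments for the upper bound are comparatively routine.
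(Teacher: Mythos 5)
Your architecture matches the paper's closely: a first-moment union bound above $\dcondupper$, small subgraph conditioning plus the Achlioptas--Naor optimization below $\dcondlower$, and a pinning/Cauchy--Schwarz argument for non-detectability. The identification of the effective degree $\avgdeg' = \avgdeg\lambda^2(q-1)^2$ and the validity range of the Achlioptas--Naor bound is exactly what the paper uses.

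There is, however, one genuine gap in the lower-bound part. You propose to apply small subgraph conditioning directly to the unconditioned likelihood ratio $X_n = \P_n/\Q_n$, with the second-moment input
$\E_{\Q_n}[X_n^2] \to \prod_{k\ge 3}e^{\mu_k\delta_k^2}$. This does not hold throughout $\avgdeg < \dcondlower$. The sum defining $\E_{\Q_n}[X_n^2]$ ranges over all pairs $(\sigma,\tau)$, and highly unbalanced labellings contribute divergent terms. Concretely, take $\sigma=\tau$ constant (say all vertices in group $1$); the per-edge factor $\E_{\Q_n}[W_{uv}^2]$ equals $1 + (\cin - \avgdeg)^2/(\avgdeg n) + O(n^{-2})$, so this single pair contributes
\[
\P_n(\sigma)^2 \prod_{u<v}\E_{\Q_n}[W_{uv}^2]
\;=\; q^{-2n}\,\exp\!\Bigl(\tfrac{n}{2}\,\avgdeg\,\lambda^2(q-1)^2 + O(1)\Bigr),
\]
which diverges once $\avgdeg > 4\log q/\bigl(\lambda^2(q-1)^2\bigr)$. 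For every $q\ge 4$ one checks $(q-1)\log(q-1) > 2\log q$, so there is a nonempty interval of $\avgdeg$ strictly below $\dcondlower$ on which $\E_{\Q_n}[X_n^2]=\infty$, and your step (iii) is false as stated. The paper's fix is to condition on the event $\Omega_n$ that all group sizes lie within $a_n=o(n)$ (with $a_n\gg\sqrt n$) of their means: one defines $\hat\P_n=(\P_n\mid\Omega_n)$, proves the second-moment convergence for $\hat\P_n/\Q_n$ (where the restriction to $\Omega_n$ forces the relevant joint types into $\Delta_{q^2}(\pi)$, exactly the domain of the $Q$-functional and of the Achlioptas--Naor optimization), applies small subgraph conditioning to get $\hat\P_n$ contiguous to $\Q_n$, and then observes that $\P_n(\Omega_n)\to 1$ to conclude $\P_n$ and $\Q_n$ are contiguous. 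The same conditioning also shows up in the non-detectability pinning argument, where the quantities $\tilde N_{ij}$ are controlled only on $\Omega_n$; without it the uniform integrability step fails for the same reason as above. You should insert this conditioning explicitly; otherwise the Laplace-type expansion is being carried out over the wrong domain of joint types and the ``flat type is the unique maximizer'' claim would have to be proved over all of $\Delta_{q^2}$ rather than over the doubly stochastic polytope, which is where Achlioptas--Naor applies.

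One smaller remark: the Laplace-method intuition (``max of $\Phi$ at the flat type, plus Gaussian fluctuations'') is the right picture, but to actually extract the constant $\prod_k\psi(\avgdeg\lambda_i\lambda_j)$ you need more than the exponential rate -- the paper goes through a multinomial CLT for the overlap counts $N_{ij}$ together with a uniform-integrability estimate (Proposition~\ref{prop:ui}, which again lives on $\Omega_n$), not a direct saddle-point expansion. You flag this as ``the main obstacle,'' which is accurate; just be aware that the $\Omega_n$-restriction is what makes that estimate go through.
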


The lower bound in Theorem~\ref{thm:main} comes from a more general (but less explicit) bound that holds also
for block models that are not symmetric. In order to state the more general result, we must first introduce
some notation.

\begin{definition}\label{def:D}
 Let $\Delta_m$ denote the probability simplex in $\R^m$:
\begin{align*}
  \Delta_m \defas \{p \in \R^m:  p_i \geq 0, \sum_{i=1}^m p_i = 1\}.
\end{align*}
   Define $D: \Delta_m \times \Delta_m \to \R$ by
  \[
   D(p, \tilde p) = \sum_{i=1}^m p_i \log (p_i/\tilde p_i).
  \]
\end{definition}
 
Note that if we interpret $p, \tilde p \in \Delta_m$ as probability distributions on a $m$-point
set, then $D(p, \tilde p)$ is exactly the Kullback-Leibler divergence of $p$ with respect to $\tilde p$.

\begin{definition}
For $\pi \in \Delta_q$, define
\[
 \Delta_{q^2}(\pi) \defas \{
 (p_{ij})_{i,j=1}^q \in \Delta_{q^2}: \sum_{i=1}^q p_{ij} = \pi_j \text{ and } \sum_{j=1}^q p_{ij} = \pi_i
 \text{ for all } i,j
 \}.
\]
In other words, elements of $\Delta_{q^2}(\pi)$ are probability distributions on $[q]^2$ that have $\pi$ as
their marginal distributions.
\end{definition}

\begin{definition}\label{def:Q}
For $\pi \in \Delta_q$ and a $q \times q$ matrix $A$, let $p = \pi \otimes \pi$, where $\otimes$ denotes Kronecker product and define
\[
 Q(\pi, A) = \sup_{\alpha \in \Delta_{q^2}(\pi)} \frac{\trans{(\alpha - p)} (A \otimes A) (\alpha - p)}{D(\alpha, p)}.
\]
\end{definition}

Although we do not know any simple algebraic expression for $Q$, one can easily
compute numerical approximations.
For non-symmetric stochastic block models, our main result is that $Q$ gives
a lower bound on the detectability threshold:

\begin{theorem}\label{thm:non-distinguish}
Let $\P_n = \calG(n, M/n, \pi)$ and $\Q_n = \calG(n, d/n)$, where $d = \sum_{j} M_{ij} \pi_j$.
If
\[Q(\pi, (M - d \J)/\sqrt{2d}) < 1\]
then $\P_n$ and $\Q_n$ are contiguous and $\P_n$ is non-detectable.
\end{theorem}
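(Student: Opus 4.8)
The plan is to establish contiguity via the classical second moment method: it suffices to show that the ratio $\d\P_n/\d\Q_n$ has a bounded second moment under $\Q_n$, i.e. that $\E_{\Q_n}[(\d\P_n/\d\Q_n)^2] = O(1)$, but in the sparse regime this fails outright because of fluctuations in short-cycle counts. So the real plan is to apply the small subgraph conditioning method of Robinson and Wormald: I would condition on the counts $X_k$ of cycles of each length $k$ (for all fixed $k$), show that these are asymptotically independent Poisson random variables under both $\P_n$ and $\Q_n$, compute the limiting means, and verify that the truncated second moment $\E_{\Q_n}[(\d\P_n/\d\Q_n)^2]$ grows exactly like $\prod_k \exp((\delta_k - 1)^2 \mu_k / \ldots)$ matches the correction predicted by the cycle counts. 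The first step, then, is to compute $\E_{\Q_n}[(\d\P_n/\d\Q_n)^2]$ asymptotically. Writing $\d\P_n/\d\Q_n$ as an average over the planted labelling $\sigma$ of $\d\P_n(\cdot\mid\sigma)/\d\Q_n$, the square becomes a double average over two independent labellings $\sigma, \tau$; taking the expectation over the \ER\ graph decouples the edges, and one gets (up to lower-order multiplicative factors) a sum over pairs $(\sigma,\tau)$ weighted by $\prod_{u<v} (1 + \text{something involving } M_{\sigma_u\sigma_v}, M_{\tau_u\tau_v})$. Exponentiating and replacing the sum over $(\sigma,\tau)$ by an integral over the empirical joint distribution $\alpha \in \Delta_{q^2}(\pi)$, Laplace/large-deviations asymptotics give an exponent of the form $n \sup_{\alpha} \big( \tfrac{1}{2d}\,(\alpha - p)^\intercal (M\otimes M - \ldots)(\alpha-p) - D(\alpha, p) \big) + o(n)$, and the condition $Q(\pi, (M - d\J)/\sqrt{2d}) < 1$ is exactly the statement that this supremum is attained (to leading order) only at $\alpha = p$, so the exponential part contributes $1 + o(1)$; the polynomial corrections are precisely what the cycle counts explain.

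The second step is to justify the small subgraph conditioning bookkeeping. I would verify: (i) under $\Q_n$, $(X_k)_k \to (\Pois(\mu_k))_k$ jointly, with $\mu_k = \tfrac{1}{2k} d^k$ (standard for \ER); (ii) under $\P_n$, $(X_k)_k \to (\Pois(\mu_k(1 + \delta_k)))_k$ jointly, where $\delta_k$ comes from the local tree structure — here the Markov matrix $T$ (equivalently, $M$ and $\pi$) enters, and one expects $1 + \delta_k = \tr(T^k) = 1 + \sum_{j \ge 2} \lambda_j^k$, since a cycle of length $k$ in $\P_n$ carries a factor tracking the class labels around it; and (iii) the crucial analytic identity that the exponential growth rate of the truncated second moment equals $\sum_k \tfrac{(\lambda_k' )^2 \mu_k}{\ldots}$ — more precisely, that the polynomial part of $\E_{\Q_n}[(\d\P_n/\d\Q_n)^2]$ equals $\prod_k \exp\!\big( \tfrac{\mu_k \delta_k^2}{1} \big)$ up to the right normalization, which is finite iff $\sum_k \mu_k \delta_k^2 < \infty$, i.e. iff $\sum_k \tfrac{1}{2k} d^k (\sum_{j\ge 2}\lambda_j^k)^2 < \infty$, i.e. iff $d\lambda^2 < 1$. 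This last convergence is automatic in the regime of interest, and combined with step one (the exponential part being trivial) the Robinson–Wormald theorem yields contiguity.

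The third step is the passage from contiguity to non-detectability, i.e. to showing the posterior on any fixed finite set of vertices is asymptotically uniform. The idea is that detectability is itself (essentially) an event-based property: if some algorithm $\calA$ achieved overlap bounded below by $\delta$ with positive $\P_n$-probability, one could build from it a sequence of events $A_n$ (e.g. "$\olap(\calA(G), \hat\sigma(G)) > \delta$" for a suitable estimator, or more carefully a statement about the empirical overlap distribution) on which $\P_n$ and $\Q_n$ behave differently — but under $\Q_n$ there is no planted labelling, so any fixed labelling has overlap concentrating at $0$, giving $\Q_n(A_n) \to 0$ while $\P_n(A_n) \not\to 0$, contradicting contiguity. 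Making this fully rigorous requires care because the overlap is defined via the true $\sigma$, which $\Q_n$ doesn't have; the standard fix is to phrase everything in terms of two independent samples from the posterior and the symmetry between them, or to invoke the known equivalence (in the literature on these models) between contiguity, non-reconstruction, and vanishing mutual information. I expect \textbf{the main obstacle} to be step one: carrying out the large-deviations analysis of the second moment uniformly enough to separate the clean exponential term (governed by $Q < 1$) from the subexponential cycle corrections, and identifying the latter precisely with $\prod_k \exp(\mu_k\delta_k^2/\ldots)$ so that the hypotheses of the small subgraph conditioning method are met exactly; the symmetric specialization to $\dcondlower = \tfrac{2\log(q-1)}{q-1}\lambda^{-2}$ is then a matter of plugging the doubly-stochastic structure into $Q$ and invoking the Achlioptas–Naor optimization.
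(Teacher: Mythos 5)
Your steps (1) and (2) follow the paper's architecture for the contiguity half of the theorem reasonably closely: second moment, small subgraph conditioning with cycle counts, $\delta_k = \tr(T^k)-1$, and the product $\prod_k \exp(\mu_k\delta_k^2)$. Two things to tighten there. First, the paper never works with the bare ratio $\d\P_n/\d\Q_n$; it works with $Y_n = \mathbbm{1}_{\Omega_n}\,\d\P_n/\d\Q_n$, where $\Omega_n$ forces the empirical label frequencies to be within $a_n = o(n)$ of $n\pi$. This conditioning is what forces the empirical overlap matrix $\alpha$ to lie in $\Delta_{q^2}(\pi)$ (both marginals equal $\pi$), which is the domain over which $Q$ is defined; without it the sup in your Laplace exponent runs over a larger set and the argument does not reduce to the hypothesis $Q < 1$. (Your phrase ``truncated second moment'' may be gesturing at this, but it needs to be made explicit; contiguity of $\hat\P_n$ and $\Q_n$ then transfers to $\P_n$ and $\Q_n$ because $\P_n(\Omega_n)\to 1$.) Second, the condition $Q<1$ is not merely ``the sup is attained at $\alpha=p$''---that only gives a subexponential second moment. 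The paper uses $Q<1$ to prove uniform integrability of $\exp(\frac{1}{2d}X^\intercal A^{\otimes 2}X)$ where $X$ is the centered, $\sqrt{n}$-normalized overlap count; UI plus a CLT then gives the exact constant $\prod_{i,j\ge 2}\psi(d\lambda_i\lambda_j)$ needed to match the cycle product. The sharp constant, not just $O(1)$, is what the Robinson--Wormald hypothesis requires.

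Your step (3) contains a genuine gap. You propose to deduce non-detectability from contiguity via events $A_n$ built from an algorithm's output, and then fall back on a ``known equivalence (in the literature on these models) between contiguity, non-reconstruction, and vanishing mutual information.'' No such equivalence is known; indeed this very paper says explicitly in the introduction that although distinguishability is believed to be no harder than detection, ``we do not know any rigorous proof of this statement.'' The obstruction is exactly the one you flag and then wave away: the overlap $\olap(\calA(G),\sigma)$ involves the planted $\sigma$, which exists only as part of the joint distribution $(G,\sigma)\sim\P_n$; contiguity is a statement about the marginal of $G$ alone, and does not control any functional of $\sigma$. The paper's proof of non-detectability is therefore a separate argument, not a corollary of contiguity: it fixes a finite set $S$ of vertices, compares $\hat\P_n(G\mid\sigma_S = a)$ and $\hat\P_n(G\mid\sigma_S = b)$ by Cauchy--Schwarz against $\Q_n$, and shows the resulting second-moment quantity depends on $(a,b)$ only up to $o(1)$---a computation parallel to, but distinct from, Proposition~\ref{prop:second-moment}, again hinging on the same uniform integrability supplied by $Q<1$. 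This yields that the posterior $\P_n(\sigma_u\mid G,\sigma_S)$ is asymptotically $\pi$, and from there a direct estimate bounds the expected overlap of any estimator. You need to replace step (3) with an argument of this type; contiguity alone is not enough.
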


For comparison with the Kesten-Stigum bound, note that $Q(\pi, (M - d\J)/\sqrt{2d}) < 1$
implies that $\lambda^2 d < 1$. This comes from comparing the second derivatives at $p$ in
the numerator and denominator of $Q$: if $Q < 1$ then the Hessian of the numerator must
be smaller (in the semidefinite order) than that of the denominator, and this turns out to be equivalent
to $\lambda^2 d < 1$.

We remark that while $Q(\pi, (M - d\J)/\sqrt{2d}) < 1$ is only a sufficient condition for the contiguity
of $\P_n$ and $\Q_n$, it is actually a sharp condition for a certain second moment to exist:
\begin{proposition}\label{prop:second-moment}
Fix a sequence $a_n$ with $a_n = o(n)$ and $a_n = \omega(\sqrt n)$. Let $\Omega_n$
be the event that for all $i \in [q]$, $|\sigma^{-1}(i)| = n\pi_i \pm a_n$.
With the notation of Theorem~\ref{thm:non-distinguish},
take $\hat \P_n$ to be $\P_n$ conditioned on $\Omega_n$.
If $Q(\pi, (M - d\J)/\sqrt{2d}) < 1$ then
\begin{equation}\label{eq:second-moment}
\lim_{n\to\infty} \E_{\Q_n} \left(\frac{\hat \P_n}{\Q_n}\right)^2 = (1 + o(1)) \prod_{i,j=2}^q \psi(d\lambda_i \lambda_j)
< \infty,
\end{equation}
where $\lambda_1, \cdots, \lambda_q$ are the eigenvalues of $T$ (cf.~\eqref{eq:T-def}) such that $1=\lambda_1 \geq \abs{\lambda_2} \geq \cdots \geq \abs{\lambda_q}$, and $\psi(x) = (1-x)^{-1/2} e^{-x/2-x^2/4}$.
On the other hand, if $Q(\pi, (M-d\J)/\sqrt{2d}) > 1$ then
 \[\lim_{n\to\infty} \E_{\Q_n} \left(\frac{\hat \P_n}{\Q_n}\right)^2 = \infty.\]
\end{proposition}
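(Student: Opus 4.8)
\textbf{Proof plan for Proposition~\ref{prop:second-moment}.}
The plan is to compute the second moment $\E_{\Q_n}(\hat\P_n/\Q_n)^2$ directly by expanding the square as a sum over pairs of labellings and identifying the dominant contribution. Writing $L_n = \hat\P_n/\Q_n$, one has $\E_{\Q_n} L_n^2 = \E_{\Q_n}\E_{\Q_n}[\,\mathbbm{1}[G\sim\P_n\text{-density}]\cdots\,]$; more precisely, since $\P_n(G) = \sum_\sigma \P_n(G,\sigma)$, the Radon--Nikodym derivative against $\Q_n$ is $L_n(G) = \E_{\sigma}[\,\prod_{(u,v)\in E}\tfrac{M_{\sigma_u\sigma_v}}{d}\prod_{(u,v)\notin E}\tfrac{1-M_{\sigma_u\sigma_v}/n}{1-d/n}\,]$, with $\sigma$ drawn from $\pi^{\otimes n}$ conditioned on $\Omega_n$. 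Squaring and taking $\E_{\Q_n}$ introduces a second independent labelling $\tau$; the edge-indicators integrate out (under $\Q_n$ each pair is present independently with probability $d/n$) to leave
\[
\E_{\Q_n} L_n^2 = \E_{\sigma,\tau}\Bigl[\prod_{u<v}\Bigl(\tfrac dn\cdot\tfrac{M_{\sigma_u\sigma_v}M_{\tau_u\tau_v}}{d^2} + (1-\tfrac dn)\cdot\tfrac{(1-M_{\sigma_u\sigma_v}/n)(1-M_{\tau_u\tau_v}/n)}{(1-d/n)^2}\Bigr)\Bigr],
\]
where $\sigma,\tau$ are i.i.d.\ from $\pi^{\otimes n}\!\mid\Omega_n$. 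The per-pair factor is $1 + \tfrac{1}{nd}(M_{\sigma_u\sigma_v}-d)(M_{\tau_u\tau_v}-d) + O(n^{-2})$, so taking logs and summing over the $\binom n2$ pairs, the leading term is governed by the empirical overlap matrix $\alpha = (\alpha_{ij})$ with $\alpha_{ij} = \tfrac1n|\sigma^{-1}(i)\cap\tau^{-1}(j)|$, and one obtains approximately $\exp\bigl(\tfrac{n}{2d}\,\trans{(\alpha-p)}(M-d\J)^{\otimes 2}(\alpha-p)\bigr)$ where $p = \pi\otimes\pi$ (the diagonal $u=v$ terms and the $O(n^{-2})$ corrections are lower order, but must be tracked to get the exact constant).

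The next step is to average this over the randomness of $\sigma,\tau$. The number of pairs $(\sigma,\tau)$ with overlap matrix $\alpha$ is, by a standard multinomial/Stirling estimate, $\exp(n H(\alpha) + o(n))$ relative to its value at $\alpha = p$, where the exponent is exactly $-n D(\alpha, p)$ after accounting for the marginal constraints $\alpha\in\Delta_{q^2}(\pi)$ forced (up to $a_n = o(n)$ fluctuations) by conditioning on $\Omega_n$. Hence by Laplace's method the second moment is, to exponential order, $\exp\bigl(n\sup_{\alpha\in\Delta_{q^2}(\pi)}\bigl[\tfrac{1}{2d}\trans{(\alpha-p)}(M-d\J)^{\otimes2}(\alpha-p) - D(\alpha,p)\bigr]\bigr)$. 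This supremum is zero iff $\tfrac{1}{2d}\trans{(\alpha-p)}(M-d\J)^{\otimes2}(\alpha-p) \le D(\alpha,p)$ for all $\alpha$, i.e.\ iff $Q(\pi,(M-d\J)/\sqrt{2d})\le 1$ (using $(M-d\J)^{\otimes2} = (M-d\J)\otimes(M-d\J)$ and the definition of $Q$); and if $Q>1$ the supremum is strictly positive, forcing $\E_{\Q_n}L_n^2\to\infty$. This gives the second (divergence) assertion and shows that $Q<1$ is necessary for boundedness.

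For the matching upper bound and the exact constant $\prod_{i,j=2}^q\psi(d\lambda_i\lambda_j)$ when $Q<1$, the point is that the supremum above is attained only at $\alpha = p$, so the integral concentrates in a $\Theta(1/\sqrt n)$-neighborhood of $p$ and a local Gaussian (Laplace) expansion applies. Writing $\alpha = p + x/\sqrt n$ with $x$ in the tangent space to $\Delta_{q^2}(\pi)$ at $p$, the exponent becomes a nondegenerate negative-definite quadratic form: the Hessian of $D(\cdot,p)$ at $p$ is $\diag(1/p_{ij})$ restricted to that tangent space, and the quadratic part of the numerator is $\tfrac{1}{2d}(M-d\J)^{\otimes2}$, so the Gaussian integral yields a ratio of determinants. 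Diagonalizing simultaneously using the eigenbasis of $T$ (the tangent space decomposes according to eigenvectors $v_i\otimes v_j$ for $i,j\ge2$, since the $i=1$ or $j=1$ directions are killed by the marginal constraints, and $\diag(\pi)M$ has eigenvalues $d\lambda_i$) turns this ratio into $\prod_{i,j\ge2}(1-d\lambda_i\lambda_j)^{-1/2}$; the extra factors $e^{-d\lambda_i\lambda_j/2 - (d\lambda_i\lambda_j)^2/4}$ in $\psi$ come from the first- and second-order corrections beyond the crude per-pair approximation (the $O(n^{-2})$ edge terms, the $u=v$ diagonal, and the cubic remainder in the log-expansion), which contribute an $O(1)$ multiplicative correction that must be computed carefully.

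\textbf{Main obstacle.} The conceptual heart is the Laplace argument, but the genuinely delicate part is the bookkeeping that produces the \emph{exact} constant rather than just the exponential order: one must control the tail of the $\alpha$-sum away from $p$ uniformly (using $Q<1$ strictly, plus the fact that $D$ grows at least quadratically so that no second local maximum escapes), and simultaneously expand the per-pair product to second order, showing that all the ``junk'' ($u=v$ terms, $1/n^2$ edge corrections, $n\times(x/\sqrt n)^3$ cubic terms) either vanishes or assembles precisely into the $e^{-x/2-x^2/4}$ factors of $\psi$. Getting the constraint-space dimension count right — so that the product runs over $i,j\in\{2,\dots,q\}$ and not all of $[q]$ — is where the structure of $\Delta_{q^2}(\pi)$ and the Perron eigenvector of $T$ enter, and is the step I would be most careful about.
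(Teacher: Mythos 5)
Your plan is correct and is essentially the approach the paper takes: expand the square over pairs of labellings, extract the rate function $\frac{1}{2d}\trans{(\alpha-p)}(M-d\J)^{\otimes 2}(\alpha-p) - D(\alpha,p)$ whose sign is governed by $Q$, and then do a local Gaussian analysis at $\alpha=p$ (which the paper packages as a CLT for the normalized multinomial $X$ plus a uniform-integrability lemma that is itself proved by exactly the Stirling/tail-control comparison you outline). The one place the paper is tighter than your sketch is that the $e^{-x/2-x^2/4}$ factors are not left as an unspecified ``$O(1)$ correction'' but are computed explicitly as $\exp(\nu_1+\nu_2)$ — $\nu_1$ from the $u=v$ diagonal and $\nu_2$ from the $O(n^{-2})$ per-pair terms — and identified with $-\frac12\sum d\lambda_i\lambda_j - \frac14\sum(d\lambda_i\lambda_j)^2$ via traces of $B=T-\pi\trans{\1}$, exactly as you anticipate.
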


\subsection{Outline of the paper}
We prove the upper bound of Theorem~\ref{thm:main} in Section~\ref{sec:first}. In Section~\ref{sec:second},
we prove the lower bound of Theorem~\ref{thm:main} assuming Theorem~\ref{thm:non-distinguish}. In Section~\ref{sec:secondmoment},
we prove Proposition~\ref{prop:second-moment}. Finally, in Section~\ref{sec:proof-mainthm}, we prove
Theorem~\ref{thm:non-distinguish}. Some auxiliary results are proved in Appendix~\ref{sec:UI-multinomials}.

\subsection{Outline of the proofs}
The part of Theorem~\ref{thm:main} regarding $\dcondupper$ follows from union bounds.
First, note that under $\P_n$, groups in the planted partition have average in-degree of about
$\cin/k$ and average out-degree of about $(k-1)\cout/k$. We call such partitions ``good.''
In order
to show orthogonality, we show that with high probabability, graphs from $\Q_n$ have no good partitions.
(That is, the events $A_n$ witnessing orthogonality are $A_n = \{G \text{ has no good partitions}\}$.)
We show this by computing the probability that a particular partition is good and comparing it to the
number of all partitions.
In order to show detectability, we show that with high probability under $\P_n$, every good partition
is correlated with the planted partition: we bound the probability that a given partition is good, and
sum the probabilities over all partitions that are uncorrelatd with the planted one.

The part of Theorem~\ref{thm:main} regarding $\dcondlower$ follows from Theorem~\ref{thm:non-distinguish}.
We recognize that the optimization problem in the definition
of $Q$ may be written as an optimization over the set of doubly-stochastic matrices.
Using tools due to \cite{achlioptas-naor} (Theorem~\ref{thm:achlioptas-naor} and Lemma~\ref{lem:achlioptas-naor}), we prove
that $d < \dcondlower$ implies that $Q < 1$, and we conclude by applying Theorem~\ref{thm:non-distinguish}.

Proposition~\ref{prop:second-moment} is the main technical step in the proof of contiguity in
Theorem~\ref{thm:non-distinguish}. With Proposition~\ref{prop:second-moment} in hand,
we apply the small subgraph conditioning method 
(see Theorem~\ref{thm:conditional-second-mom})  
which is a type of conditional second moment method.  
In order to apply it, we only need to know the limiting distribution of small subgraphs under $\hat \P_n$ and $\Q_n$ 
(which are already known) and~\eqref{eq:second-moment} from Proposition~\ref{prop:second-moment}.

The proof of Proposition~\ref{prop:second-moment} itself is tedious but elementary: we expand the
square and write the result as the exponential of a quadratic form in multinomial random variables.
Shifted and renormalized, the multinomial variables have a Gaussian limit; the expectation of
an exponentiated quadratic form of Gaussian variables can be computed exactly, and gives~\eqref{eq:second-moment}.
In order to apply the central limit theorem in the above argument, one needs to check
that the exponentiated quadratic form in multinomial variables is uniformly integrable. This
naturally leads to the condition on $Q$: we need to compare an exponentiated quadratic form
with the multinomial probability mass function, which is essentially an exponentiated entropy.
In the end, we need the entropy to dominate the quadratic form (which is exactly what happens
with $Q < 1$).

Finally, to prove non-detectability in Theorem~\ref{thm:non-distinguish} we compare the distribution
$\P_n$ to the distribution (call it $\tilde \P_n$) obtained by conditioning on the labels of a
constant number of vertices. If we can show that the resulting distributions are close in total variation,
it implies that the labels of those vertices cannot be statistically inferred.
Applying the Cauchy-Schwarz inequality to the total variation distance, it is enough to show that
\[
  \E_{\Q_n} \mathbbm{1}_{\Omega_n} \left(\frac{\P_n}{\Q_n} - \frac{\tilde \P_n}{\Q_n}\right)^2
\]
is small. This naturally leads to a computation very similar to the proof of Proposition~\ref{prop:second-moment}.
The only difference is that we are now conditioning on the labels of a constant number of vertices,
but that has very little effect.

\subsection{Conclusions and future work}
\label{sec:conclusion}

We (and, independently, \cite{abbe-sandon-isit}) have shown that community detection is information-theoretically possible below the Kesten-Stigum threshold.  However, we have not given any evidence that it is computationally hard.  Of course, we cannot hope to prove this without knowing that $\mathrm{P} \ne \mathrm{NP}$, but we could hope to prove that certain classes of algorithms take exponential time.  In particular, we could show that Monte Carlo algorithms or belief propagation take exponential time to find a good partition, assuming their initial states or messages are uniformly random.  

Physically, we believe this occurs because there is a free energy barrier between a ``paramagnetic'' phase of partitions which are essentially random, and a ``ferromagnetic'' or ``retrieval'' phase which is correlated with the planted partition (\cite{DKMZ11a,DKMZ11b,zhang-moore}).  Proving this seems within reach: rigorous results have been obtained in random constraint satisfaction problems (\cite{achlioptas-coja-oghlan,coja-oghlan-efthymiou}) showing that solutions become clustered with $O(n)$ Hamming distance and $O(n)$ energy barriers between them.  In particular, Markov chain Monte Carlo algorithms for sampling the posterior distribution, such as Metropolis-Hastings or Glauber dynamics that update the label of one vertex at time according to its marginal distribution conditioned on the current labels of its neighbors, take exponential time to travel from one cluster to another.  The goal in this case would be to show in a planted model that Monte Carlo takes exponential time to find the cluster corresponding to the planted solution.  

Finally, both our upper and lower bounds can be improved.  Our upper bound requires that w.h.p.\ all good partitions are correlated with the planted one.  We could obtain better bounds by requiring that this is true w.h.p.\ of \emph{most} good partitions, which would require a lower bound on the typical number of good partitions with large overlap.  In the limit $\lambda \to 1$ of strong assortative structure, for instance, one can use the fact that vertices of degree $1$ can be set to match their neighbors, or set freely to give the same typical overlap as the planted partition.  Using these and other ideas,~\cite{abbe-sandon-isit} showed that $\dcond \to 1$ as $\lambda \to 1$, while our bounds only give $\dcond \le 2$.  (For regimes where $\dcond$ is large, their bounds and ours are asymptotically equivalent.)  Further improvements seem possible.  


The second moment lower bound could be improved as it was for the $k$-colorability threshold in~\cite{coja-oghlan-vilenchik}.  Indeed, the condensation threshold $\dcond$ for $k$-coloring was determined exactly in~\cite{bapst-condensation-coloring} for sufficiently large $k$.  It is entirely possible that their techniques could work here.  Note that constraint satisfaction problems correspond to zero-temperature models in physics, while the block model with $\cin, \cout \ne 0$ corresponds to a spin system at positive temperature; but some rigorous results have recently been obtained here as well by~\cite{bapst-positive-temperature}.



\section{Upper bound for symmetric SBMs: Proof of upper bound in Theorem~\ref{thm:main}}
\label{sec:first}

In this section, we prove the part of Theorem~\ref{thm:main} relating to $\dcondupper$.
Recall that Theorem~\ref{thm:main} assumes a symmetric block model; i.e., $\pi_i = 1/q$ for every $i$, and the connectivity matrix $M$ is determined by only two parameters, $\cin$ and $\cout$.

Our upper bound on the detectability threshold hinges on the following observation.  We say a partition is \emph{balanced} if it has $n/q$ vertices in each group.  With high probability, a graph generated by the SBM has at least one balanced partition, close to the the planted one, where the number of within-group and between-group edges $\ein$ and $\eout$ are close to their expectations.  That is, 
\begin{equation}
\label{eq:in-close}
|\ein - \mein| < n^{2/3} 
\quad \text{and} \quad 
|\eout - \meout| < n^{2/3}
\end{equation}
where
\begin{align} 
\mein &= \frac{\cin}{2q} \,n
= \frac{\avgdeg (1+(q-1)\lambda)}{2q} \,n \nonumber \\
\meout &= \frac{(q-1) \cout}{2q} \,n
= \frac{\avgdeg (q-1)(1-\lambda)}{2q} \,n \, .
\label{eq:mein-def}
\end{align}
This follows from standard concentration inequalities on the binomial
distribution: the number of vertices in each group in $\sigma$ is w.h.p.\ $n/q
+ o(n^{2/3} / \log n)$, in which case~\eqref{eq:in-close} holds w.h.p.  Since
the maximum degree is w.h.p.\ less than $\log n$, we can modify $\sigma$ to
make it balanced while changing $\ein$ and $\eout$  by $o(n^{2/3})$.

Call such a partition \emph{good}.  We will show that if $\avgdeg > \dcondupper$ all good partitions are correlated with the planted one.  As a result, there is an exponential algorithm that performs better than chance: simply use exhaustive search to find a good partition, and output it.

\subsection{Distinguishability from $G(n,\avgdeg/n)$}

As a warm-up, we show that if $\avgdeg > \dcondupper$ the probability that an \ER\ graph has a good partition is exponentially small, so the two distributions $\P$ and $\Q$ are asymptotically orthogonal.

Let $G$ be a graph generated by $G(n,\avgdeg/n)$.  We condition on the high-probability event that it has $m$ edges with $|m - \mm| < n^{2/3}$ with
\[
\mm = \mein + \meout = \avgdeg n/2 \, ,
\]
in which case $G$ is chosen from $G(n,m)$.  Since $G$ is sparse, we can think of its $m$ edges as chosen uniformly with replacement from the $n^2$ possible ordered pairs.  With probability $\Theta(1)$ the resulting graph is simple, with no self-loops or multiple edges, and hence uniform in $G(n,m)$.  Thus any event that holds with high probability in the resulting model holds with high probability in $G(n,m)$ as well.  Call this model $G'(n,m)$.  

For a given balanced partition $\sigma$, the probability in $G'(n,m)$ that a given edge has its endpoints in the same group is $1/q$.  Thus, up to subexponential terms resulting from summing over the $n^{2/3}$ possible values of the error terms, the probability that a given $\sigma$ is good is 
\[
\Pr[\Bin(\mm,1/q) = \mein] = {\mm \choose \mein} (1/q)^\mein (1-1/q)^{\meout} \, . 
\]
The rate of this large-deviation event is given by the Kullback-Leibler divergence between binomial distributions with success probability $1/q$ and $\mein/\mm$, 
\begin{align*}
\lim_{\mm \to \infty} \frac{1}{\mm} \log \Pr[\Bin(\mm,1/q) = \mein]
&= -\frac{\mein}{\mm} \log \frac{\mein/\mm}{1/q} - \frac{\meout}{\mm} \log \frac{\meout/\mm}{1-1/q} \\
&= -\frac{\cin}{q \avgdeg} \log \frac{\cin}{\avgdeg} - \left( 1 - \frac{\cin}{q \avgdeg} \right) \log \frac{q \avgdeg-\cin}{\avgdeg (q-1)} \, , 
\end{align*}
where we used $\mein/\mm = \cin/(q \avgdeg)$ and $\meout/\mm = 1-\cin/(q \avgdeg)$.  Writing this in terms of $\avgdeg$ and $\lambda$ as in~\eqref{eq:reparam} and simplifying gives
\begin{equation}
\label{eq:good-rate}
\lim_{n \to \infty} \frac{1}{n} \log \Pr[\mbox{$\sigma$ is good}] 
= - \frac{\avgdeg}{2q} \big[
(1+(q-1) \lambda) \log (1+(q-1) \lambda) 
+ (q-1)(1-\lambda) \log (1-\lambda)
\big] 
\, . 
\end{equation}
Now, by the union bound, since there are at most $q^n$ balanced partitions, the probability that any good partitions exist is exponentially small whenever the function in~\eqref{eq:good-rate} is less than $-\log q$.  This tells us that the block model is distinguishable from an \ER\ graph whenever 
\[
\avgdeg > \dcondupper 
= \frac{2 q \log q}
{(1+(q-1) \lambda) \log (1+(q-1) \lambda) 
+ (q-1)(1-\lambda) \log (1-\lambda)} \, ,
\]
As noted above, the limit $\lambda = -1/(q-1)$ corresponds to the planted graph coloring problem.  In this case $\dcondupper$ is simply the first-moment upper bound on the $q$-colorability threshold, 
\[
\dcondupper = \frac{2 \log q}{-\log (1-1/q)} < 2 q \log q \, .
\]

\subsection{All good partitions are accurate} 

Next we show that, if $d > \dcondupper$, with high probability any good
partition is correlated with the planted one.  Essentially, the previous
calculation for $G(n,m)$ corresponds to counting good partitions $\tau$ which are
uncorrelated with $\sigma$, i.e., which have $\olap(\sigma, \tau)=0$.
We will show that in order for a good partition to exist, its overlap
with $\sigma$ is strictly greater than $0$.

Given a balanced partition $\tau$, let $\ein$ and $\eout$ denote the number of
edges $(u,v)$ with $\tau_u = \tau_v$ and $\tau_u \ne \tau_v$ respectively.
As in the previous section, we say that $\tau$ is \emph{good}
if~\eqref{eq:in-close} holds, i.e., $|\ein-\mein|, |\eout-\meout| < n^{2/3}$
where $\mein$ and $\meout$ are given by~\eqref{eq:mein-def}. 
Note that the right-hand side of~\eqref{eq:good-accurate} is an increasing
function of $\beta$, and that it coincides with $\dcondupper$ when $\beta=0$.

\begin{theorem}
\label{thm:good-accurate}
Let $G$ be generated by the stochastic block model with parameters $\cin$ and
$\cout$, and let $\avgdeg$ and $\lambda$ be defined as in~\eqref{eq:avgdeg}
and~\eqref{eq:lambda}.  If $\avgdeg > \dcondupper$ then, with high probability,
any good partition has overlap at least $\beta > 0$ with the planted
partition $\sigma$, where $\beta$ is the smallest root of 
\begin{equation}
\label{eq:good-accurate}
\avgdeg = \frac{2q \bigl( h(\beta+\frac{1}{q}) + (1-\frac{1}{q}-\beta) \log (q-1) \bigr)}
{
(1+(q-1)\lambda) \log \frac{1+(q-1) \lambda}{1 + q\beta \lambda}
+ (q-1)(1-\lambda) \log \frac{(q-1)(1-\lambda)}{q - 1 - q\beta \lambda}
}
\end{equation}
where $h = -\bigl(\beta+\frac{1}{q}\bigr) \log \bigl(\beta+\frac{1}{q}\bigr) - \bigl(1-\frac{1}{q}-\beta\bigr) \log \bigl(1-\frac{1}{q}-\beta\bigr)$
is the entropy function.  Therefore, an exponential-time algorithm exists that w.h.p.\
achieves overlap at least $\beta$.
\end{theorem}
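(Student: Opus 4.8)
The plan is to run the same first-moment (union bound) argument as in the previous subsection, but now stratified by the overlap of the candidate partition $\tau$ with the planted partition $\sigma$. For a fixed overlap value $\beta \in [0, 1 - 1/q]$, I would estimate (i) the number of balanced partitions $\tau$ with $\olap(\sigma,\tau) = \beta$, and (ii) the probability that such a $\tau$ is good, i.e.\ that $\ein$ and $\eout$ are within $n^{2/3}$ of $\mein,\meout$. For (i), one counts the ways to place $n/q$ vertices of each $\tau$-class among the $\sigma$-classes so that the confusion matrix has the prescribed overlap; the dominant contribution comes from the "symmetric" confusion matrix with diagonal entries $(\beta + 1/q)\,n/q$ and off-diagonal entries $(1/q - \beta/(q-1))\cdot$(appropriate count), and its exponential rate is $q\bigl(h(\beta + \tfrac1q) + (1 - \tfrac1q - \beta)\log(q-1)\bigr)$, exactly the numerator of~\eqref{eq:good-accurate}. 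The subexponential corrections (number of confusion matrices, error bars $n^{2/3}$) contribute $e^{o(n)}$ and are harmless.

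For (ii), condition as before on $G$ having $m = \avgdeg n/2 \pm n^{2/3}$ edges and work in the model $G'(n,m)$ where edges are i.i.d.\ uniform pairs. Under $\P_n$, an edge is a within-$\sigma$-group edge with probability $\cin/(q\avgdeg)$ and a between-group edge with probability $(q-1)\cout/(q\avgdeg)$. Given that a $\tau$ with overlap $\beta$ is fixed, one computes the probability $r_{\mathrm{in}}(\beta)$ that a $\P_n$-random edge lands within a $\tau$-group: this is a bilinear function of the transition matrix $T$ and the confusion matrix, and for the symmetric confusion matrix it works out to $r_{\mathrm{in}}(\beta) = (1 + q\beta\lambda)/q$. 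Then the probability that $\tau$ is good is, up to $e^{o(n)}$, $\Pr[\Bin(m, r_{\mathrm{in}}(\beta)) = \mein]$, whose exponential rate is $-m\,D\bigl((\tfrac{\mein}{m}, \tfrac{\meout}{m}) \,\big\|\, (r_{\mathrm{in}}(\beta), 1 - r_{\mathrm{in}}(\beta))\bigr)$. Substituting $\mein/m = \cin/(q\avgdeg) = (1+(q-1)\lambda)/q$ and simplifying via~\eqref{eq:reparam} gives precisely minus the denominator of~\eqref{eq:good-accurate}, divided by $2q$, times $\avgdeg$.

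Combining, the expected number of good $\tau$ with overlap $\beta$ has exponential rate
\[
\Phi(\beta) = q\Bigl(h\bigl(\beta+\tfrac1q\bigr) + \bigl(1-\tfrac1q-\beta\bigr)\log(q-1)\Bigr)
- \frac{\avgdeg}{2q}\Bigl[(1+(q-1)\lambda)\log\tfrac{1+(q-1)\lambda}{1+q\beta\lambda} + (q-1)(1-\lambda)\log\tfrac{(q-1)(1-\lambda)}{q-1-q\beta\lambda}\Bigr],
\]
and~\eqref{eq:good-accurate} is exactly the equation $\Phi(\beta) = 0$. Since $\Phi(0) < 0$ is equivalent to $\avgdeg > \dcondupper$ (this is the $\beta = 0$ case, matching the previous subsection), and since the hypothesis $\avgdeg > \dcondupper$ forces $\Phi < 0$ on an interval $[0,\beta)$ up to the smallest positive root $\beta$, a union bound over the $O(n^2)$ discretized values of overlap and edge counts shows that w.h.p.\ no good partition with overlap $< \beta$ exists. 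Exhaustive search over all balanced partitions then returns a good one (the planted partition is good w.h.p., so the search succeeds), which must have overlap $\ge \beta$.

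The main obstacle is the combinatorial bookkeeping in step (i) and (ii): verifying that the dominant confusion matrix is indeed the symmetric one (a concavity/symmetrization argument on the rate function over $\Delta_{q^2}$ with balanced marginals), and checking that $\Phi$ is well-behaved enough — in particular that it is negative on $[0,\beta)$ and that $\beta > 0$ — so that the smallest root genuinely separates "few good partitions" from "possibly many." Everything else (concentration of group sizes, the $G'(n,m)$ reduction, absorbing $e^{o(n)}$ errors) is routine and identical to the warm-up computation.
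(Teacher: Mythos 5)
Your plan follows the paper's overall structure — fix a candidate $\tau$, pass to an i.i.d.\ edge model, compute the probability that a $T$-random edge lands within a $\tau$-group, and union-bound over $\tau$ stratified by overlap — but it contains a concrete computational error at the crucial step, and that error hides the one genuinely nontrivial lemma in the paper's proof.

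You claim that for the ``symmetric'' confusion matrix $\alpha$ (diagonal $1/q+\beta$, off-diagonal $1/q-\beta/(q-1)$), the within-$\tau$-group probability is $r_{\mathrm{in}}(\beta)=(1+q\beta\lambda)/q$. This is false. The correct identity, for any doubly stochastic $\alpha$, is
\[
\theta(\alpha)=\frac{1}{q}\Tr\bigl(\trans{\alpha}\,T\,\alpha\bigr)=\frac{1+(|\alpha|^2-1)\lambda}{q},
\]
and for the symmetric matrix one computes $|\alpha|^2=1+\frac{q^2\beta^2}{q-1}$, not $1+q\beta$. So the symmetric matrix actually gives $\theta=\bigl(1+\frac{q^2\beta^2}{q-1}\lambda\bigr)/q$. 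The value $(1+q\beta\lambda)/q$ you wrote corresponds instead to the \emph{worst-case} Frobenius norm $|\alpha|^2=1+q\beta$, which is the content of the paper's Lemma~\ref{lem:overlap}: by Birkhoff's theorem, writing $\alpha$ as a convex combination of permutation matrices gives $|\alpha|^2\le\max_\pi\Tr\pi^{-1}\alpha=1+q\olap(\sigma,\tau)$, with equality in general only at $\alpha=\J/q$ and $\alpha$ a permutation.

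This matters because the union-bound argument requires controlling $H(\alpha)+R(\alpha)$ (entropy rate plus log-probability rate) \emph{uniformly over all confusion matrices with a given overlap}, and the entropy-maximizing $\alpha$ (your symmetric one) is \emph{not} the $|\alpha|^2$-maximizing one. The paper sidesteps identifying a single dominant $\alpha$ altogether: it bounds $H(\alpha)$ by the entropy of the symmetric matrix, and separately bounds the probability via $|\alpha|^2\le 1+q\beta$, multiplying two bounds that are not tight at the same point but are both valid for every $\alpha$. Your write-up implicitly assumes the symmetric matrix simultaneously maximizes both, which is not justified and, as the Frobenius-norm computation shows, is not true. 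You arrived at the stated formula~\eqref{eq:good-accurate} only by accident, via a wrong value of $|\alpha|^2$. To repair the proof you need Lemma~\ref{lem:overlap} (or some equivalent) to decouple the two bounds; the rest of your outline then goes through as the paper's does.
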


\begin{proof}
We start by conditioning on the high-probability event that $G$ has $m$ edges,
where $|m-\mm| < n^{2/3}$ and $\mm = \avgdeg n/2$.  Call the resulting model
$G_\SBM(n,m)$ (with the matrix of parameters $M$ implicit).  It consists of the
distribution over all simple graphs with $m$ edges, with probability
proportional to $\P(G \mid \sigma)$.

In analogy with the model $G'(n,m)$ defined above, we consider another version
of the block model where the $m$ edges are chosen independently as follows.  
For each edge, we first choose an ordered pair of groups $r, s$ with
probability proportional to $M_{rs}$, i.e., with probability
$\connectivity_{rs}/q$ where $\connectivity = M/(q \avgdeg)$ is the doubly
stochastic matrix defined in~\eqref{eq:connectivity}.  We then choose the
endpoints $u$ and $v$ uniformly from $\sigma^{-1}(r)$ and $\sigma^{-1}(s)$
(with replacement if $r=s$).  
Call this model $G'_\SBM(n,m)$.  In the sparse case $\avgdeg = O(1/n)$, the
resulting graph is simple with probability $\Theta(1)$, in which event it is
generated by $G_\SBM(n,m)$.  Thus any event that holds with high probability in
$G'_\SBM(n,m)$ holds with high probability in $G_\SBM(n,m)$ as well.

Now fix a balanced partition $\tau$. Let $\theta$ denote the probability that an
edge $(u,v)$ chosen in this way is within-group with respect to $\tau$. Define the $q \times q$ matrix $\overlap$ by
\[
  \alpha_{st} = \frac qn |\sigma^{-1}(s) \cap \tau^{-1}(t)|;
\]
in other words, $\overlap_{st}$ is the probability that $\tau_u=t$ if $u$ is
chosen uniformly from those with $\sigma_u=s$. 
Up to $O(1/n)$ terms, the events that $\tau_u=t$ and $\tau_v=t$ are independent.  Thus in the limit $n \to \infty$,
\begin{align*}
\theta := \Pr[\tau_u=\tau_v] 
&= \sum_{r,s,t} \Pr[\sigma_u=r \wedge \sigma_v=s \wedge \tau_u=\tau_v=t] \\
&= \frac{1}{q} \sum_{r,s,t} \connectivity_{rs} \overlap_{rt} \overlap_{st} \\
&= \frac{1}{q} \Tr \trans{\overlap} \connectivity \overlap \, ,
\end{align*}
where $\trans{}$ denotes the matrix transpose. Since $\connectivity = \lambda \eye + (1-\lambda) \frac{\J}{q}$
and $\J \overlap = \overlap \J = \J$, this gives
\[
\theta = \frac{1 + (|\overlap|^2-1) \lambda}{q} \, ,
\]
where $|\overlap|$ denotes the Frobenius norm, 
\[
  |\overlap|^2 = \Tr \trans{\overlap} \overlap = \sum_{r,s} \overlap_{rs}^2 \, .
\]
When $\tau$ and $\sigma$ are uncorrelated and $\overlap = \J/q$, we have $\theta=1/q$ as in the previous section.  When $\sigma = \tau$ and $\overlap = \eye$, we have $\theta = \cin/(q\avgdeg) = (1+(q-1)\lambda)/q$.

For $\tau$ to be good, we need $|\ein - \mein| < n^{2/3}$.  Since $|m-\mm| < n^{2/3}$ as well, up to subexponential terms the probability that $\tau$ is good is 
\[
\Pr[\Bin(\mm,\theta) = \mein] = {\mm \choose \mein} \theta^\mein (1-\theta)^{\meout} \, . 
\]
The rate at which this occurs is again a Kullback-Leibler divergence, between binomial distributions with success probabilities $\theta$ and $\mein/\mm = \cin/(q \avgdeg)$.  Following our previous calculations gives 
\begin{align}
&\lim_{n \to \infty} \frac{1}{n} \log \Pr[\Bin(\mm,\theta) = \mein] 
\label{eq:tau-good} \\
&= -\frac{\avgdeg}{2} \left(
\frac{\cin}{q \avgdeg} \log \frac{\cin}{\theta q\avgdeg} + \left( 1 - \frac{\cin}{q \avgdeg} \right) \log \frac{1-\cin/q\avgdeg}{1-\theta} 
\right) 
\nonumber \\
&= -\frac{\avgdeg}{2q} \left[
(1+(q-1)\lambda) \log \frac{1+(q-1) \lambda}{\theta q}
+ (q-1)(1-\lambda) \log \frac{(q-1)(1-\lambda)}{q(1-\theta)} 
\right] 
\nonumber \\
&= -\frac{\avgdeg}{2q} \left[
(1+(q-1)\lambda) \log \frac{1+(q-1) \lambda}{1 + (|\overlap|^2-1) \lambda}
+ (q-1)(1-\lambda) \log \frac{(q-1)(1-\lambda)}{q - 1 - (|\overlap|^2-1) \lambda}
\right] 
\, . \nonumber 
\end{align}

We pause to prove a lemma which relates the Frobenius norm to the overlap.  This bound is far from tight except in the extreme cases $\overlap = \J/q$ and $\overlap = \eye$, but it lets us derive an explicit lower bound on the overlap of a good partition. 
\begin{lemma}
\label{lem:overlap} 
$|\overlap|^2 \le 1+ q\olap(\sigma,\tau)$.
\end{lemma}

\begin{proof}
Since $\overlap$ is doubly stochastic, Birkhoff's theorem tells us it can be expressed as a convex combination of permutation matrices, 
\[
\overlap = \sum_\pi a_\pi \pi 
\quad \text{where} \quad
\sum_\pi a_\pi = 1 \, . 
\]
Thus 
\begin{align*}
	|\overlap|^2 
        = \Tr \trans{\overlap} \overlap 
	= \Tr \left( \sum_\pi a_\pi \pi^{-1} \right) \overlap 
	= \sum_\pi a_\pi \Tr \pi^{-1} \overlap 
	\le \max_\pi \Tr \pi^{-1} \overlap 
	= 1+q\olap(\sigma,\tau), \,
\end{align*}
where the last step follows from the fact that, for balanced partitions $\sigma$ and $\tau$, the overlap is a maximum, over all permutations $\pi$:
\begin{align*}
  \olap(\sigma,\tau) &= \frac{1}{n}\max_\pi \sum_{i=1}^q \left(|\sigma^{-1}(i) \cap \tau^{-1}(\pi(i))| - \frac 1n |\sigma^{-1}(i)| |\tau^{-1} (\pi(i))| \right) \\
	&= \frac{1}{q} \max_\pi \Tr \pi^{-1} \overlap - \frac{1}{q} \, , 
\end{align*}
completing the proof.
\end{proof}

The function in~\eqref{eq:tau-good} is an increasing function of $\lambda$, since as $\lambda$ increases the distributions $\Bin[\mm,q]$ and $\Bin[\mm,\cin/(q\avgdeg)]$ become closer in Kullback-Leibler distance.  Thus if $\tau$ has overlap $\beta$,  Lemma~\ref{lem:overlap} implies 
\begin{align}
\lim_{n \to \infty} &\frac{1}{n} \Pr[\mbox{$\tau$ is good}] 
\nonumber \\
&\le -\frac{\avgdeg}{2q} \left[
(1+(q-1)\lambda) \log \frac{1+(q-1) \lambda}{1 + q\beta \lambda}
+ (q-1)(1-\lambda) \log \frac{(q-1)(1-\lambda)}{q - 1 - q\beta \lambda}
\right] \, . \label{eq:tau-good-2}
\end{align}

For fixed $\sigma$, the number of balanced partitions $\tau$ with overlap matrix $\overlap$ is the number of ways to partition each group $\sigma^{-1}(r)$ so that there are $\overlap_{rs} n/q$ vertices in $\sigma^{-1}(r) \cap \tau^{-1}(s)$:
\[
\prod_{r=1}^q {n/q \choose \{ \overlap_{rs} n / q \mid 1 \le s \le q \} } 
= \prod_{r=1}^q \frac{(n/q)!}{\prod_s (\overlap_{r,s} n/q)!} \le \e^{n H(\overlap)} \, , 
\]
where $H(\overlap)$ is the average entropy of the rows of $\overlap_{rs}/q$, 
\begin{equation}
H(\overlap) = - \frac{1}{q} \sum_{r,s} \overlap_{rs} \log \overlap_{rs} \, . 
\label{eq:H-defn}
\end{equation}
By the union bound, the probability that there are any good partitions with overlap matrix $\overlap$ is exponentially small whenever the sum of $H(\overlap)$ and the right-hand side of~\eqref{eq:tau-good-2} is negative.  For a fixed overlap $\beta$, maximized by the permutation $\pi$, the entropy $H(\overlap)$ is maximized when 
\[
\overlap_{rs} = \begin{cases}
\frac{1}{q}+\beta & \mbox{if $s=\pi(r)$} \\
\frac{1}{q}-\frac{\beta}{q-1} & \mbox{if $s \ne \pi(r)$} \, , 
\end{cases}
\]
so we have 
\begin{equation}
\label{eq:entropy-bound}
H(\overlap) \le h\left(\frac{1}{q}+\beta\right) + \left(1-\frac{1}{q}-\beta\right) \log (q-1) \, . 
\end{equation}
Combining the bounds~\eqref{eq:tau-good-2} and~\eqref{eq:entropy-bound}, and requiring that their sum is at least zero, completes the proof.
\end{proof}


\subsection{Detection below the Kesten-Stigum bound}

In \S\ref{sec:intro-contribution} we commented on the asymptotic behavior of $\dcondupper$ in various regimes.  In Table~\ref{tab:1} we give, for various values of $q$, the point $\lambda^*$ at which $\dcondupper = 1/\lambda^2$; then $\dcondupper < 1/\lambda^2$ for $\lambda < \lambda^*$.  As stated above, in the limit $q \to \infty$ we have $\dcondupper = 2/\lambda$, so $\lambda^*$ tends to $1/2$.

\begin{table}
\begin{center}
$
\begin{array}{|c|c|c|c|c|c|c|c|c|c|c|c|}
\hline
q & 5 & 6 & 7 & 8 & 9 & 10 & 11 & 20 & 100 & 1000 & 10^4 \\
\lambda^* 
& -0.239
& -0.166
& -0.112
& -0.070
& -0.036
& -0.08 
& 0.014 
& 0.127
& 0.286
& 0.372
& 0.410
\\
\hline
\end{array}
$
\end{center}
\bigskip
\caption{For $\lambda < \lambda^*$ we have $\dcondupper < 1/\lambda^2$, so that community detection is information-theoretically possible below the Kesten-Stigum bound.  For $q \ge 5$, this holds in the sufficiently disassortative case, including planted graph coloring where $\lambda = -1/(q-1)$.  For $q \ge 11$, it occurs throughout the disassortative range $\lambda < 0$, and in some assortative cases.}
\label{tab:1}
\end{table}

\section{Lower bound for symmetric SBMs: Proof of lower bound in Theorem~\ref{thm:main}}
\label{sec:second}

In this section we use the general bound of Theorem~\ref{thm:non-distinguish}
to prove the part of Theorem~\ref{thm:main} involving $\dcondlower$.  In
particular, we study the quantity $Q$---defined in Definition~\ref{def:Q}---in
the case of symmetric stochastic block models.  Note that $Q$ is defined as the
maximum of a certain function over the set of doubly
stochastic matrices.  This kind of maximization problem was studied extensively
by~\cite{achlioptas-naor} on the way to proving their lower
bound on the $q$-colorability threshold, allowing us to relate this problem to
theirs.

First, note that $Q(\pi, (M - d\J)/\sqrt{2d})$ simplifies considerably in the symmetric
case, when $\pi_i = \frac 1q$ for all $i$ and $M$ is determined by only two parameters.
In this case, $\Delta_{q^2}(\pi)$ is (up to scaling) the set of doubly stochastic matrices,
while
\[
M - d\J = \lambda d \begin{pmatrix} q-1 & & -1 \\ & \ddots & \\ -1 & & q-1 \end{pmatrix}.
\]
Going back to Definition~\ref{def:Q}, we see that $Q(\pi, (M - d\J)/\sqrt{2d}) < 1$ if and only if
$\Phi(\overlap) < 0$ for all doubly stochastic $\overlap$, where
\begin{align}
  \Phi(\overlap) &= H(\overlap) - \log q + \frac{\avgdeg \lambda^2}{2}\left(|\overlap|^2 - 1\right),
  \label{eq:phi} 
\end{align}
$|\overlap|$ denoting the Frobenius norm and $H(\cdot)$ the average row entropy of $\overlap/q$
as in \eqref{eq:H-defn}. 
By Theorem~\ref{thm:non-distinguish}, if $\Phi(\overlap) < 0$ for all doubly stochastic $\overlap$
then (i) $\P_n$ and $\Q_n$ are contiguous, and (ii) $\P_n$ is non-detectable.

\subsection{Maximizing $\Phi$}

\cite{achlioptas-naor}, in the process of proving a lower bound on the
$q$-coloring threshold for \ER\ graphs, develop substantial machinery for
optimizing $\Phi$-like functions over the polytope of doubly
stochastic matrices.
Specifically, they relax the problem to maximizing over all row-stochastic
matrices, and show that the maximizer is then a mixture of uniform rows and
rows where all but one of the entries are identical.  Although their bound is
quite general, we quote here their results for the entropy.  (Note that their
definition of $H(\overlap)$ and ours differ by a factor of $q$.)

\begin{theorem}\label{thm:achlioptas-naor}
\cite[Theorem 9]{achlioptas-naor} Let $\overlap$ be doubly stochastic with $|\overlap|^2 = \rho$.  Then
\begin{equation}
H(\overlap) \le \max_{ m\in\left[0,\frac{q(q-\rho)}{q-1}\right] } 
\left\{ 
\frac{m}{q} \log q 
+ \left( 1-\frac{m}{q} \right) f\!\left(\frac{q\rho-m}{q(q-m)}\right)\right\} \, ,
\end{equation}
where
\begin{align*}
	f(r) = g\!\left(\frac{1 + \sqrt{(q-1)(qr - 1)}}{q}\right) + (q-1) \,g\!\left(\frac{1 - \frac{1 + \sqrt{(q-1)(qr - 1)}}{q}}{q-1}\right) 
\end{align*}
and $g(x) = -x \log x$. 
\end{theorem}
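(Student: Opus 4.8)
The plan is to follow the strategy of Achlioptas and Naor: relax to row-stochastic matrices, decouple the rows, and then resolve a single scalar optimization via a concave-envelope argument. First I would observe that $H(\overlap)$ depends only on the entries of $\overlap$, and every doubly stochastic $\overlap$ with $|\overlap|^2 = \rho$ is in particular row-stochastic with the same Frobenius norm; hence it suffices to bound the maximum of $H$ over row-stochastic matrices with $|\overlap|^2 = \rho$. Writing $\overlap_1, \dots, \overlap_q \in \Delta_q$ for the rows, $H(\overlap) = \frac{1}{q}\sum_{r=1}^q E(\overlap_r)$ where $E(p) = -\sum_s p_s \log p_s$ is the Shannon entropy, and the norm constraint reads $\sum_{r=1}^q |\overlap_r|^2 = \rho$ with each $|\overlap_r|^2 \in [1/q, 1]$. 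So, dropping the column-sum constraints, the problem becomes: maximize $\sum_{r=1}^q E(\overlap_r)$ over $(\overlap_1, \dots, \overlap_q) \in \Delta_q^q$ subject only to $\sum_r |\overlap_r|^2 = \rho$.

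Second, I would determine the one-row profile $\phi(t) \defas \max\{E(p) : p \in \Delta_q,\ |p|^2 = t\}$ for $t \in [1/q, 1]$ and show $\phi(t) = f(t)$, with $f$ as in the statement (the inequality $E(p) \le f(|p|^2)$ is all the theorem actually needs). A Lagrange-multiplier computation at an optimal $p$ gives $-\log p_s - 1 = \mu + 2\nu p_s$ for every $s$ with $p_s > 0$; if $\nu \ge 0$ the map $x \mapsto -\log x - 1 - 2\nu x$ is strictly decreasing, forcing $p$ uniform and $t = 1/q$, while if $\nu < 0$ this map is unimodal, so $p$ takes at most two distinct values. A direct comparison of the entropies of the two-valued candidates with a fixed $\ell^2$-norm (the cleanest way is a third-order expansion of $E$ about the uniform point, which shows the $(1,q-1)$ split with the singleton carrying the larger value wins) singles out the profile with one coordinate equal to $x$ and the remaining $q-1$ equal to $(1-x)/(q-1)$; solving $x^2 + (1-x)^2/(q-1) = t$ for the larger root yields $x = \frac{1 + \sqrt{(q-1)(qt-1)}}{q}$, whence $\phi(t) = g(x) + (q-1)\, g\!\bigl(\tfrac{1-x}{q-1}\bigr) = f(t)$.

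Third comes the concavity step. Setting $t_r \defas |\overlap_r|^2$, it remains to bound $\sum_{r=1}^q f(t_r)$ subject to $t_r \in [1/q,1]$ and $\sum_r t_r = \rho$. Letting $\hat f$ be the concave envelope of $f$ on $[1/q,1]$, we get $\sum_r f(t_r) \le \sum_r \hat f(t_r) \le q\, \hat f(\rho/q)$ by Jensen. A shape analysis of $f$ — it is convex near the left endpoint $t = 1/q$, where $f(1/q) = \log q$ is the global maximum of $f$, and concave thereafter — shows that $\hat f(\rho/q)$ is attained by a convex combination of the uniform point $t = 1/q$ with weight $m/q$ and a single interior point $r$ with weight $1 - m/q$, where $\frac{m}{q}\cdot\frac{1}{q} + \bigl(1 - \frac{m}{q}\bigr) r = \frac{\rho}{q}$, i.e. $r = \frac{q\rho - m}{q(q-m)}$; the requirement $r \in [1/q,1]$, together with $\rho \ge 1$ for doubly stochastic $\overlap$, is precisely $m \in \bigl[0, \frac{q(q-\rho)}{q-1}\bigr]$. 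Therefore $H(\overlap) \le \hat f(\rho/q) = \frac{m}{q}\log q + \bigl(1 - \frac{m}{q}\bigr) f\!\bigl(\tfrac{q\rho - m}{q(q-m)}\bigr)$, and taking the supremum over the admissible range of $m$ gives the claimed bound.

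The main obstacle is the second step and, within it, the shape analysis of $\phi$: one must upgrade the local third-order comparison to a global statement that among two-valued distributions with a prescribed $\ell^2$-norm the entropy is maximized by the $(1,q-1)$ split with the singleton carrying the \emph{larger} value, and one must pin down exactly where $f$ loses concavity, so that its concave envelope is the chord from the uniform point to a point of tangency (and not some other mixture of extreme points of the constraint region). The remaining ingredients — the relaxation to row-stochastic matrices, the decoupling of the rows, and the final Jensen/envelope estimate — are routine once these structural facts about $f$ are available.
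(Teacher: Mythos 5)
The paper does not actually prove this theorem: it is quoted verbatim (with the citation inside the theorem statement) from Achlioptas and Naor, and the only proof-level content the paper supplies is the one-sentence gloss that ``they relax the problem to maximizing over all row-stochastic matrices, and show that the maximizer is then a mixture of uniform rows and rows where all but one of the entries are identical.'' Your plan is exactly this gloss, fleshed out, and the skeleton is sound: the relaxation to row-stochastic matrices is a valid upper bound; the rows then decouple; the Lagrange condition $-\log p_s - 1 = \mu + 2\nu p_s$ forcing at most two distinct positive entries per row is the standard step; and the Jensen/concave-envelope argument, together with the observation that $r = \frac{q\rho-m}{q(q-m)} \in [1/q,1]$ translates (using $\rho \ge 1$) into $m \in [0,\frac{q(q-\rho)}{q-1}]$, does produce a bound of the stated form. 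The two obstacles you name at the end are the genuine mathematical content of the result, and they should not be understated: (i) you need the \emph{global} statement that among all admissible two-valued profiles $(k,q-k)$ with a prescribed $\ell^2$-norm, the $(1,q-1)$ split with the singleton taking the larger root has maximal entropy — the third-order expansion near the uniform point only establishes this locally at $t$ near $1/q$ (your calculation that $k=1$ maximizes $\sum_s \epsilon_s^3$ is correct, but a global comparison across all $k$ and all $t\in[1/q,1]$ is still required), and (ii) the envelope step silently requires the convex-then-concave shape of $f$, since without it the concave envelope $\hat f(\rho/q)$ need not be achieved by a chord anchored at $(1/q,\log q)$, and then $\hat f(\rho/q)$ could exceed the $\max_m$ in the statement and the chain of inequalities would not close. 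Both of these are established in Achlioptas--Naor by explicit analysis of $f$; as a blind plan yours is faithful to their route but leaves those two steps as assertions.
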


With this result in hand and using $f(1/q) = q\,g(1/q) = \log q$, we know that for all $\overlap$ with $|\overlap|^2 = \rho$, 
\begin{align*}
\Phi(\overlap) 
\le 
\max_{m \in \bigl[0,q(q-\rho)/(q-1)\bigr]} \left(1 - \frac{m}{q} \right)  \left( f\!\left(\frac{q\rho-m}{q(q-m)}\right) - f(1/q) \right)
+ \frac{\avgdeg \lambda^2}{2}(\rho - 1).
\end{align*}
Achlioptas and Naor determined the value of $\avgdeg \lambda^2/2$ for which the right-hand side is less than or equal to zero for all $m \in [0,q(q-\rho)/(q-1)]$ and all $\rho \in [1,q]$.  
\begin{lemma}\label{lem:achlioptas-naor}
\cite[Proof of Theorem 7]{achlioptas-naor} 
When $\delta < (q-1) \log (q-1)$,
\begin{align*}
	\frac{\delta(\rho - 1)}{(q-1)^2} \le \left(1 - \frac{m}{q}\right)\left( f(1/q) - f\!\left(\frac{q\rho - m}{q(q-m)}\right)\right)
\end{align*}
for all $m \in [0,q(q-\rho)/(q-1)]$ and all $\rho \in [1,q]$.  
\end{lemma}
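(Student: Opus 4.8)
The plan is to collapse the two-parameter inequality to a single-variable one and then import the analysis of Achlioptas and Naor essentially verbatim. First, if $\rho = 1$ both sides vanish, so assume $\rho > 1$. I would introduce $s = 1 - m/q$; as $m$ runs over $[0, q(q-\rho)/(q-1)]$ the quantity $s$ runs over $[(\rho-1)/(q-1), 1]$, and a direct computation gives $\tfrac{q\rho - m}{q(q-m)} = \tfrac{\rho - 1 + s}{qs} =: r$ with $qr - 1 = (\rho-1)/s$. Substituting $\rho - 1 = s(qr - 1)$ and $1 - m/q = s$, the claimed bound becomes $\tfrac{\delta s(qr-1)}{(q-1)^2} \le s\,(f(1/q) - f(r))$; dividing by $s > 0$ it is equivalent to
\[
f(1/q) - f(r) \ \ge\ \frac{\delta(qr-1)}{(q-1)^2} \, ,
\]
and as $(\rho, s)$ range, $r$ sweeps out exactly $(1/q, 1]$, so this single inequality in $r$ is all that remains.

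Next I would unwind $f$. With $x = \tfrac{1 + \sqrt{(q-1)(qr-1)}}{q}$ one has $qr - 1 = (qx-1)^2/(q-1)$ and $f(1/q) = \log q$; substituting $x = \tfrac{1 + (q-1)y}{q}$ with $y \in (0,1]$ (so $qx - 1 = (q-1)y$ and $\tfrac{1-x}{q-1} = \tfrac{1-y}{q}$), the $\log q$ terms cancel because $(1 + (q-1)y) + (q-1)(1-y) = q$, and the inequality simplifies to
\[
D(y) \ \ge\ \frac{q\delta}{q-1}\,y^2 \quad (y \in [0,1]) \, , \qquad D(y) := (1+(q-1)y)\log(1+(q-1)y) + (q-1)(1-y)\log(1-y) \, .
\]
Since $\delta < (q-1)\log(q-1)$ forces $\tfrac{q\delta}{q-1} < q\log(q-1)$, it suffices to prove the cleaner bound $D(y) \ge q\log(q-1)\,y^2$ for all $y \in [0,1]$.

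I would then study $R(y) := D(y)/y^2$ on $(0,1]$. The endpoints are immediate: $D(0) = D'(0) = 0$ and $D''(0) = q(q-1)$ give $R(0^+) = q(q-1)/2 \ge q\log(q-1)$ (using $q - 1 \ge 2\log(q-1)$), while $R(1) = D(1) = q\log q > q\log(q-1)$. For the interior, setting $E(y) := yD'(y) - 2D(y)$ one has $R'(y) = E(y)/y^3$, $E(0) = E'(0) = 0$, and $E''(y) = yD'''(y)$, where $D'''$ is negative on $(0, \tfrac{q-2}{2(q-1)})$ and positive afterward; hence $E'$, and then $E$ (which vanishes at $0$), and then $R'$, changes sign exactly once on $(0,1)$, from $-$ to $+$, so $R$ has a single interior critical point $y^\ast$, a minimum, at which $R(y^\ast) = D'(y^\ast)/(2y^\ast)$. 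The remaining task, $R(y^\ast) \ge q\log(q-1)$, is exactly what Achlioptas and Naor carry out in the proof of their Theorem~7 (one unwinds the stationarity relation and bounds $y^\ast$), and that is the step I would simply cite.

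The reductions in the first two paragraphs are routine algebra; the one genuine obstacle is the interior estimate in the last paragraph. It is essentially sharp — the gap between $\min R$ and $q\log(q-1)$ is only $O(1)$, not growing with $q$ — so a self-contained proof would have to reproduce the delicate computation in Achlioptas and Naor's Theorem~7, while everything else reduces cleanly to elementary facts about $D$.
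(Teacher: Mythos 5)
Your reductions are correct, and it is worth emphasizing that the paper itself gives \emph{no} proof of this lemma --- it simply cites the proof of Theorem~7 in Achlioptas--Naor. So you are not taking a different route from the paper; you are filling in a route where the paper has none, by reconstructing the algebra that Achlioptas and Naor use internally. The change of variables $s=1-m/q$, $r=(q\rho-m)/(q(q-m))$ (so $qr-1=(\rho-1)/s$) correctly collapses the two-parameter claim to
\[
f(1/q)-f(r)\ \ge\ \frac{\delta(qr-1)}{(q-1)^2},\qquad r\in[1/q,1],
\]
and the further substitution $x=\tfrac{1+(q-1)y}{q}$ verifies cleanly to $f(1/q)-f(r)=D(y)/q$ with $qr-1=(q-1)y^2$, reducing the task to $D(y)\ge\tfrac{q\delta}{q-1}y^2$, hence (using $\delta<(q-1)\log(q-1)$) to the clean bound $D(y)\ge q\log(q-1)\,y^2$ on $[0,1]$. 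Your calculus on $R(y)=D(y)/y^2$ is also right: $R(0^+)=q(q-1)/2\ge q\log(q-1)$ (since $x-2\log x>0$ for $x\ge 1$), $R(1)=q\log q>q\log(q-1)$, and the sign analysis of $E(y)=yD'(y)-2D(y)$ via $E''=yD'''$ shows a unique interior minimum for $q\ge 3$ (the case $q=2$ is vacuous since $q\log(q-1)=0$).

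The one thing to be honest about is that you still end by citing Achlioptas--Naor for the crux: the value of $R$ at the interior critical point. That is not a flaw --- the paper does exactly this, wholesale, with less detail --- but it means the genuinely delicate estimate is still outsourced. As you note, the inequality is essentially tight (at $\lambda=-1/(q-1)$ it reproduces the AN lower bound $d\ge 2(q-1)\log(q-1)$ exactly), so there is no soft way around that final computation. Your contribution over the paper's bare citation is the explicit reduction to the one-variable inequality $D(y)\ge q\log(q-1)y^2$, which makes the dependence on AN precise and local rather than global.
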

Our lower bound is an immediate corollary of this lemma. Substituting $\delta = d \lambda^2 (q-1)^2 / 2$ and solving for $d$ gives
\begin{align}
	\dcondlower = \frac{2\log(q-1)}{q-1}\frac{1}{\lambda^2} \, .
\end{align}

As we commented in~\S\ref{sec:intro-contribution}, this corresponds to the
lower bound on the $q$-colorability threshold of $G(n,\avgdeg'/n)$ where
$\avgdeg' = 2\delta = \avgdeg \lambda^2 (q-1)^2$, scaling the eigenvalue on
each edge to $\lambda$ from its value $-1/(q-1)$ for $q$-coloring.  This fits
with the Kesten-Stigum threshold as well, since the amount of information
(appropriately defined) transmitted along each edge is proportional to
$\lambda^2$ (\cite{janson2004robust}).

\section{The second moment argument: Proof of Proposition~\ref{prop:second-moment}}
\label{sec:secondmoment}
In this section, we will prove Proposition~\ref{prop:second-moment}, thereby
showing the link between the condition $Q(\pi, (M - d\J)/\sqrt{2d}) < 1$ and
the boundedness of certain second moments.
Our first lemma expresses the second moment in question in terms of (centered and
normalized) multinomial random variables.  In order to state the lemma, we make
the following notation.  Given two assignments $\sigma, \tau \in [q]^n$, let
$N_{ij} \defas N_{ij}(\sigma, \tau) \defas |\{v : \sigma_v = i, \tau_v = j\}|$,
and $X_{ij} \defas X_{ij}(\sigma, \tau) \defas n^{-1/2}\left(N_{ij}- n
\pi_i\pi_j\right)$.
Recall that $\Omega_n$ is the event that the label frequencies are approximately their expected values, and
let $Y_n$ denote the restricted density $\mathbbm{1}_{\Omega_n} \frac{d \P_n}{d \Q_n}$.
 With a slight overloading of notation, we write $\sigma \in \Omega_n$ if for all $i \in [q]$, 
 $|\{u : \sigma_u = i\}| = n \pi_i \pm a_n$.
Set $A \defas M - d \J$.

\begin{lemma}\label{lem:second-moment-simpl}
We have:
\begin{align*}
  \E_{\Q_n} Y_n^2
  &= (1 + O(n^{-1}))
 \sum_{\sigma,\tau \in \Omega_n} \P_n(\sigma) \P_n(\tau)
 \exp\left(\frac 1{2d} \sum_{ijk\ell} X_{ij} X_{k\ell} A_{ik} A_{j\ell} + \nu_1 + \nu_2 + \xi_n\right),
\end{align*}
where
\begin{align*}
  \nu_1 &= -\frac{1}{2d} \sum_{ij} A_{ii} A_{jj} \pi_i \pi_j, \\
  \nu_2 &= -\frac{1}{2d^2} \sum_{ijk\ell} A_{ik}^2 A_{j\ell}^2 \pi_i \pi_j \pi_k \pi_\ell, \mbox{ and} \\
  \xi_n &= O(n^{-1/2}) \sum_{ij} |X_{ij}| + O(n^{-1}) \left(\sum_{ij} |X_{ij}|\right)^2.
\end{align*}
\end{lemma}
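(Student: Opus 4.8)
The plan is to compute $\E_{\Q_n} Y_n^2$ directly from the densities and then perform a sequence of elementary but careful Taylor expansions, tracking the error terms. First I would write $Y_n^2 = \mathbbm{1}_{\Omega_n} (\d\P_n/\d\Q_n)^2$ and expand $\E_{\Q_n} Y_n^2 = \sum_{\sigma,\tau \in \Omega_n} \E_{\Q_n}[(\d\P_n^\sigma/\d\Q_n)(\d\P_n^\tau/\d\Q_n)]$, where $\P_n^\sigma$ denotes the block-model density with the labelling frozen to $\sigma$. Since $\Q_n$ is $\calG(n,d/n)$, the ratio $\d\P_n^\sigma/\d\Q_n$ factorizes over (present and absent) edges, so the inner expectation over $\Q_n$ factorizes over pairs $(u,v)$. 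For a single pair with $\sigma_u=i, \tau_u=j$ on one endpoint and $\sigma_v=k, \tau_v=\ell$ on the other, the per-edge factor is $\frac dn \cdot \frac{M_{ik}}{d}\frac{M_{j\ell}}{d} + (1-\frac dn)\frac{(1-M_{ik}/n)(1-M_{j\ell}/n)}{(1-d/n)^2}$, which I would expand to order $n^{-2}$ as $1 + \frac{1}{nd}A_{ik}A_{j\ell} + \frac{1}{2n^2d^2}(A_{ik}A_{j\ell})^2 - \frac{1}{n^2 d}A_{ik}A_{j\ell} + O(n^{-3})$ using $A = M - d\J$ and the identity $\sum_j M_{ij}\pi_j = d$ (so rows of $A$ are $\pi$-orthogonal). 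Taking the product over all $\binom n2$ pairs and then $\log$, the leading term becomes $\frac{1}{nd}\sum_{\text{pairs}} A_{ik}A_{j\ell}$, which after summing over pairs and expressing everything in terms of the counts $N_{ij}(\sigma,\tau)$ turns into the quadratic form $\frac{1}{2d}\sum_{ijk\ell} N_{ij}N_{k\ell} A_{ik}A_{j\ell}/n$ plus corrections.

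Second I would substitute $N_{ij} = n\pi_i\pi_j + \sqrt n\, X_{ij}$ into this quadratic form. The $n^2$-order term $\frac{n}{2d}\sum \pi_i\pi_j\pi_k\pi_\ell A_{ik}A_{j\ell}$ vanishes because $\sum_k \pi_k A_{ik} = 0$; the cross term $\frac{\sqrt n}{d}\sum \pi_i\pi_j X_{k\ell} A_{ik}A_{j\ell}$ vanishes for the same reason; what survives at order $1$ is exactly $\frac{1}{2d}\sum_{ijk\ell} X_{ij}X_{k\ell} A_{ik}A_{j\ell}$. The $O(n^{-2})$ per-edge corrections, summed over $\binom n2$ pairs with $N_{ij} \approx n\pi_i\pi_j$, produce the constants $\nu_1$ (from the $-\frac{1}{n^2 d}A_{ik}A_{j\ell}$ term, which after summing over pairs and using $\sum_{k}\pi_k A_{ik}=0$ collapses to $-\frac{1}{2d}\sum_{ij}A_{ii}A_{jj}\pi_i\pi_j$) and $\nu_2$ (from the $\frac{1}{2n^2 d^2}(A_{ik}A_{j\ell})^2$ term, giving $-\frac{1}{2d^2}\sum_{ijk\ell}A_{ik}^2 A_{j\ell}^2\pi_i\pi_j\pi_k\pi_\ell$ — the sign and the factor come from combining the $\binom n2 \approx n^2/2$ count with the $\log(1+x)\approx x$ expansion and the cancellation of the $N_{ij}^2$ versus $n^2\pi_i^2\pi_j^2$ leading parts). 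Everything else — the difference between using $N_{ij}$ and its mean in the $O(n^{-2})$ terms, the higher-order terms in $\log(1+x)$, and the error from replacing $\prod(1+x_e)$ by $\exp(\sum x_e)$ — gets absorbed into $\xi_n$, whose form $O(n^{-1/2})\sum_{ij}|X_{ij}| + O(n^{-1})(\sum_{ij}|X_{ij}|)^2$ I would obtain by noting every such leftover is a sum over pairs of terms that are quadratic in the $X$'s divided by $n$, or linear in $X$ divided by $\sqrt n$.

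Finally, recognizing $\P_n(\sigma) = \prod_v \pi_{\sigma_v}$ on $\Omega_n$ up to the edge contributions already accounted for — actually it is cleanest to write $\d\P_n^\sigma/\d\Q_n$ as a pure edge-product so that $\P_n(\sigma)\P_n(\tau)$ re-emerges as the $\prod_v \pi_{\sigma_v}\pi_{\tau_v}$ prefactor and the $(1+O(n^{-1}))$ soaks up the difference between the true vertex-marginal normalization and this product on $\Omega_n$ — gives the claimed identity. I expect the main obstacle to be bookkeeping: making sure that the two distinct $O(n^{-2})$ per-edge corrections are expanded to exactly the right order, that the cancellations from $\pi$-orthogonality of the rows of $A$ are applied consistently (these are what kill the otherwise-divergent $n^2$ and $n^{3/2}$ terms), and that the residual is genuinely of the stated $\xi_n$ form uniformly over $\sigma,\tau \in \Omega_n$ — in particular that no term of size $\omega(1)$ in $n$ but not of the $\sum|X_{ij}|$ shape has been overlooked. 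None of the individual estimates is deep, but the cancellation structure is delicate enough that I would organize the computation by first isolating the exact exponent as a polynomial in $n^{-1/2}$ with coefficients that are explicit polynomials in the $N_{ij}$, and only then substitute $N_{ij} = n\pi_i\pi_j + \sqrt n X_{ij}$ and collect by powers of $n$.
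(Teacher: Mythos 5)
Your overall architecture matches the paper's: both write $\E_{\Q_n} Y_n^2$ as a double sum over $\sigma, \tau$ of the product over pairs $\{u,v\}$ of $\E_{\Q_n}[W_{uv}(G,\sigma)W_{uv}(G,\tau)]$, Taylor-expand the per-pair factor, recenter $N_{ij} = n\pi_i\pi_j + \sqrt n X_{ij}$, and collect by powers of $n$ using $\sum_k \pi_k A_{ik} = 0$. However, your derivation of $\nu_1$ is wrong, and this is not a slip of sign but a missing idea.

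You attribute $\nu_1$ to a linear-in-$A$ per-edge correction at order $n^{-2}$. But that term contributes nothing of order $1$: the per-pair expansion is
\[
\E_{\Q_n}[W_{uv}W_{uv}] = 1 + \frac{A_{ik}A_{j\ell}}{nd} + \frac{A_{ik}A_{j\ell}}{n^2} + O(n^{-3}),
\]
(not the expression you wrote), and summing the $n^{-2}$ linear term against $N_{ijk\ell}\approx \tfrac12 N_{ij}N_{k\ell}$ with $N_{ij}\approx n\pi_i\pi_j$ produces $\tfrac{1}{2}\bigl(\sum_{ik}\pi_i A_{ik}\pi_k\bigr)^2 = 0$ by $\pi$-orthogonality of $A$; the cross terms similarly vanish, and what survives is part of $\xi_n$. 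The true source of $\nu_1$ is combinatorial, not analytic: the number of unordered pairs $\{u,v\}$ with $(\sigma_u,\tau_u)=(i,j)$ and $(\sigma_v,\tau_v)=(k,\ell)$ equals $\tfrac12 N_{ij}N_{k\ell}$ only when $(i,j)\neq(k,\ell)$; on the diagonal it is $\binom{N_{ij}}{2} = \tfrac12 N_{ij}^2 - \tfrac12 N_{ij}$. That extra $-\tfrac12 N_{ij}$ hits the leading per-pair weight $t_{ijij}\approx A_{ii}A_{jj}/(dn)$, and with $N_{ij}\approx n\pi_i\pi_j$ it yields exactly $-\tfrac{1}{2d}\sum_{ij}A_{ii}A_{jj}\pi_i\pi_j = \nu_1$. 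Your plan never accounts for this "no self-pair" discrepancy, so if executed as written you would obtain $\nu_1 = 0$ and the lemma would come out wrong. Relatedly, the expansion you wrote conflates the pre-log and post-log forms: the $(A_{ik}A_{j\ell})^2$ term only appears after applying $\log(1+x) = x - x^2/2 + O(x^3)$ (it is the square of the $n^{-1}$ term, and carries a minus sign), not in the raw $n^{-2}$ expansion of the per-pair expectation. That confusion is what led you to look for $\nu_1$ in the wrong place.
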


\begin{proof}
 For a graph $G$ and assignment $\sigma$, define 
 \[
  W_{uv}(G, \sigma)
  = \begin{cases}
     \frac{M_{\sigma_u,\sigma_v}}{d} &\text{if $(u, v) \in E(G)$} \\
     \frac{1 - \frac{M_{\sigma_u,\sigma_v}}{n}}{1 - \frac dn} &\text{if $(u, v) \not \in E(G)$.}
    \end{cases}
 \]
 Then we may write out
 \begin{align*}
  Y_n &= \sum_{\sigma \in \Omega_n} \frac{\P_n(G, \sigma)}{\Q_n(G)} \\
  &= \sum_{\sigma \in \Omega_n} \P_n(\sigma)
  \prod_{u, v} W_{uv}(G, \sigma).
 \end{align*}
 Squaring both sides and taking expectations,
 \begin{align}
  \E_{\Q_n} Y_n^2
  &= \E_{\Q_n} \sum_{\sigma,\tau \in \Omega_n} \P_n(\sigma) \P_n(\tau) \prod_{u,v} W_{uv}(G, \sigma) W_{uv}(G, \tau) \notag \\
  &= \sum_{\sigma,\tau \in \Omega_n} \P_n(\sigma) \P_n(\tau) \prod_{u,v} \E_{\Q_n} [W_{uv}(G, \sigma) W_{uv}(G, \tau)],
  \label{eq:second-moment-1}
 \end{align}
 where the last equality holds because under $\Q_n$, and for any fixed $\sigma$,
 the variables $W_{uv}(G,\sigma)$ are independent as $u$ and $v$ vary.

Let us compute the inner expectation in~\eqref{eq:second-moment-1}.
Recall that under $\Q_n$, $(u, v) \in E(G)$ with probability $\frac dn$.
Writing (for brevity) $s$ for $M_{\sigma_u \sigma_v}$ and $t$ for $M_{\tau_u \tau_v}$, we have
\begin{align*}
 \E_{\Q_n} W_{uv}(G, \sigma) W_{uv}(G, \tau)
 &= \frac{st}{d^2} \cdot \frac dn + \frac{(1 - \frac sn)(1 - \frac tn)}{(1 - \frac dn)^2} (1 - \frac dn) \\
 &= \frac{st}{nd} + \left(1 - \frac sn\right)\left(1 - \frac tn\right)\left(1 + \frac dn + \frac{d^2}{n^2} + O(n^{-3})\right) \\
 &= 1 + \frac{(s - d)(t - d)}{nd} + \frac{(s-d)(t-d)}{n^2} + O(n^{-3})
\end{align*}
Setting $r = (s-d)(t-d)$, and using the fact that
$1 + x = \exp(x - x^2/2 + O(x^3))$, we have
\[
 \E_{\Q_n} W_{uv}(G, \sigma) W_{uv}(G, \tau)
 = \exp\left(
 \frac{r}{dn} + \frac{r}{n^2} - \frac{r^2}{2d^2 n^2} + O(n^{-3})
 \right).
\]
Now, if $(\sigma_u, \tau_u, \sigma_v, \tau_v) = (i, j, k, \ell)$
then $(s-d)(t-d) = (M_{ik} - d)(M_{j\ell} - d) = A_{ik} A_{j\ell}$. Hence,
\begin{equation}\label{eq:second-moment-2}
 \E_{\Q_n} W_{uv}(G, \sigma) W_{uv}(G, \tau)
 = \exp\left(
 \frac{A_{ik} A_{j\ell}}{dn} + \frac{A_{ik} A_{j\ell} }{n^2} - \frac{(A_{ik} A_{j\ell})^2}{2d^2 n^2} + O(n^{-3})
 \right).
\end{equation}
Let $N_{ijk\ell} = |\{\{u, v\}: \sigma_u = i, \tau_u = j, \sigma_v = k, \tau_v = \ell\}|$.
Plugging~\eqref{eq:second-moment-2} into~\eqref{eq:second-moment-1}, we have
\begin{align}
  \E_{\Q_n} Y_n^2
  &= (1 + O(n^{-1})) \sum_{\sigma,\tau \in \Omega_n} \P_n(\sigma) \P_n(\tau) \exp\left(
    \sum_{ijk\ell=1}^s N_{ijk\ell} \left(
 \frac{A_{ik} A_{j\ell}}{dn} + \frac{A_{ik} A_{j\ell} }{n^2} - \frac{(A_{ik} A_{j\ell})^2}{2d^2 n^2}
 \right)
  \right) \label{eq:second-moment-3}
\end{align}
where the $(1 + O(n^{-1}))$ term arises because $\sum_{ijk\ell} N_{ijk\ell} \le n^2$.
Applying Lemma~\ref{lem:Nijkl-Xijkl} (below) now finishes the proof.
\end{proof}

The last step in the proof of Lemma~\ref{lem:second-moment-simpl} requires us to replace
$N_{ijk\ell}$ by its normalized version, $X_{ij}$, and then rearrange the sums in~\eqref{eq:second-moment-3}.
We will do this step in slightly more generality, where we allow $N_{ijk\ell}$ to be defined
on a subset of the vertices. For the purposes of this section it suffices to consider $S = [n]$,
but the general form will be useful when we prove Theorem~\ref{thm:non-distinguish}.

\begin{lemma}\label{lem:Nijkl-Xijkl}
Let $S \subseteq [n]$ such that $\abs{S} = n - o(n)$. Further, let
\begin{align*}
N_{ijk\ell} &\defas N_{ijk\ell}(\sigma,\tau) \defas \abs{\{\{u, v\} : u,v \in S, \sigma_u = i, \tau_u = j, \sigma_v = k, \tau_v = \ell\}}, \\
N_{ij} &\defas N_{ij}(\sigma,\tau) \defas \abs{\{u : u \in S, \sigma_u = i, \tau_u = j\}} \mbox{ and,}\\
X_{ij} &\defas X_{ij}(\sigma, \tau) \defas n^{-1/2}\left(N_{ij} - n \pi_i \pi_j\right) \\
t_{ijk\ell} &\defas \frac{A_{ik}A_{j\ell}}{dn} + \frac{A_{ik} A_{j\ell}}{n^2} - \frac{(A_{ik} A_{j\ell})^2}{2 d^2 n^2}
\end{align*}
Then, we have:
\begin{align*}
    \sum_{ijk\ell} N_{ijk\ell} t_{ijk\ell} = 
\frac 1{2d} \sum_{ijk\ell} X_{ij} X_{k\ell} A_{ik} A_{j\ell} + \nu_1 + \nu_2 + \xi_n,
\end{align*}
where
\begin{align*}
  \nu_1 &= -\frac{1}{2d} \sum_{ij} A_{ii} A_{jj} \pi_i \pi_j, \\
  \nu_2 &= -\frac{1}{4d^2} \sum_{ijk\ell} A_{ik}^2 A_{j\ell}^2 \pi_i \pi_j \pi_k \pi_\ell, \mbox{ and} \\
  \xi_n &= O(n^{-1/2}) \sum_{ij} |X_{ij}| + O(n^{-1}) \left(\sum_{ij} |X_{ij}|\right)^2 + O(n^{-1}).
\end{align*}
\end{lemma}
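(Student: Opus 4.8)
The plan is to treat this as a finite combinatorial–algebraic identity: substitute $N_{ij} = n\pi_i\pi_j + \sqrt n\,X_{ij}$, rewrite the pair counts $N_{ijk\ell}$ in terms of the products $N_{ij}N_{k\ell}$, expand, and check that the potentially dangerous $n^2$‑ and $n^{3/2}$‑order terms all cancel so that only the advertised quadratic form, the constants $\nu_1,\nu_2$, and an $\xi_n$‑sized remainder survive. No limiting or probabilistic argument is needed here; everything is deterministic given $\sigma,\tau$.

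The first step is the bookkeeping identity relating $\sum_{ijk\ell}N_{ijk\ell}A_{ik}A_{j\ell}$ to $\sum_{ijk\ell}N_{ij}N_{k\ell}A_{ik}A_{j\ell}$. Since $N_{ijk\ell}$ counts pairs of distinct vertices $\{u,v\}$ while $N_{ij}N_{k\ell}$ counts ordered vertex pairs including the diagonal $u=v$, one gets, using that $A$ is symmetric and $\binom{N_{ij}}{2} = \tfrac12(N_{ij}^2 - N_{ij})$,
\[
\sum_{ijk\ell}N_{ijk\ell}A_{ik}A_{j\ell} = \tfrac12\sum_{ijk\ell}N_{ij}N_{k\ell}A_{ik}A_{j\ell} - \tfrac12\sum_{ij}N_{ij}A_{ii}A_{jj},
\]
and exactly the same identity with $(A_{ik}A_{j\ell})^2$, or with $1$, in place of $A_{ik}A_{j\ell}$. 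This reduces the whole computation to expanding $\sum_{ijk\ell}N_{ij}N_{k\ell}(\cdots)$ via $N_{ij}N_{k\ell} = n^2\pi_i\pi_j\pi_k\pi_\ell + n^{3/2}(\pi_i\pi_j X_{k\ell}+\pi_k\pi_\ell X_{ij}) + nX_{ij}X_{k\ell}$, together with the easy linear expansions of the diagonal terms $\sum_{ij}N_{ij}(\cdots)$.

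The crucial observation is that the equal-degree hypothesis $\sum_j M_{ij}\pi_j = d$, combined with the symmetry of $M$, gives $\sum_i\pi_i A_{ik} = 0$ for every $k$ (hence $\sum_{ik}\pi_i\pi_k A_{ik}=0$ as well). Therefore, in $\tfrac1{dn}\cdot\tfrac12\sum N_{ij}N_{k\ell}A_{ik}A_{j\ell}$ the $n^2$‑order term $\tfrac{n}{2d}\big(\sum_{ik}\pi_i\pi_k A_{ik}\big)^2$ vanishes, and both $n^{3/2}$‑order terms $\tfrac{\sqrt n}{2d}\sum_{k\ell}X_{k\ell}\big(\sum_i\pi_i A_{ik}\big)\big(\sum_j\pi_j A_{j\ell}\big)$ vanish; what remains is precisely $\tfrac1{2d}\sum_{ijk\ell}X_{ij}X_{k\ell}A_{ik}A_{j\ell}$. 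The diagonal correction $-\tfrac1{2dn}\sum_{ij}N_{ij}A_{ii}A_{jj}$ produces $\nu_1 = -\tfrac1{2d}\sum_{ij}\pi_i\pi_j A_{ii}A_{jj}$ plus an $O(n^{-1/2})\sum_{ij}|X_{ij}|$ fluctuation. The piece of $t_{ijk\ell}$ equal to $\tfrac{A_{ik}A_{j\ell}}{n^2}$ enjoys the same $n^2$‑ and $n^{3/2}$‑cancellations, so it contributes only $O(n^{-1})\big(\sum_{ij}|X_{ij}|\big)^2 + O(n^{-1})$; and the piece $-\tfrac{(A_{ik}A_{j\ell})^2}{2d^2 n^2}$ yields $\nu_2 = -\tfrac1{4d^2}\sum_{ijk\ell}A_{ik}^2 A_{j\ell}^2\pi_i\pi_j\pi_k\pi_\ell$ at leading order, its $n^{3/2}$ cross‑term $-\tfrac1{2d^2\sqrt n}\sum_{k\ell}X_{k\ell}\big(\sum_i\pi_i A_{ik}^2\big)\big(\sum_j\pi_j A_{j\ell}^2\big)$ (which does \emph{not} vanish, but is $O(n^{-1/2})\sum_{ij}|X_{ij}|$) and its quadratic and diagonal remainders all being absorbed into $\xi_n$. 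Summing the three pieces gives the claim, with the condition $|S| = n - o(n)$ entering only through $\sum_{ij}N_{ij} = |S|$, which perturbs nothing at the relevant orders since the vanishing sums above depend on $\pi$ and $A$ alone.

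I expect the only real friction — more a matter of care than a genuine obstacle — to be keeping the ordered/unordered/diagonal conventions for $N_{ijk\ell}$ straight so that the factor $\tfrac12$ and the precise coefficients of $\nu_1$ and $\nu_2$ emerge correctly; once the identity $\sum_i\pi_i A_{ik}=0$ is recorded, every term that is dropped is visibly of size $O(n^{-1/2})\sum_{ij}|X_{ij}|$, $O(n^{-1})\big(\sum_{ij}|X_{ij}|\big)^2$, or $O(n^{-1})$, which is exactly the definition of $\xi_n$.
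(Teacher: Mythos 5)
Your argument is correct and matches the paper's proof of Lemma~\ref{lem:Nijkl-Xijkl} essentially step for step: the reduction $N_{ijk\ell}=\tfrac12 N_{ij}N_{k\ell}$ with diagonal correction $\binom{N_{ij}}{2}$, the substitution $N_{ij}=n\pi_i\pi_j+\sqrt n\,X_{ij}$, and the use of $\sum_i\pi_iA_{ik}=0$ (from the equal-expected-degree hypothesis and symmetry of $M$) to cancel the $n^2$- and $n^{3/2}$-order contributions, with $\nu_1$ arising from the diagonal $\sum_{ij}N_{ij}t_{ijij}$ term, $\nu_2$ from the $-(A_{ik}A_{j\ell})^2/(2d^2n^2)$ piece, and the rest absorbed into $\xi_n$ exactly as you describe. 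The only phrase worth tightening is ``including the diagonal $u=v$'': that diagonal appears only in the blocks with $(i,j)=(k,\ell)$, which is precisely where the $-\tfrac12 N_{ij}$ correction comes from.
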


\begin{proof}
We see that $N_{ijk\ell} = \frac{1}{2} N_{ij} N_{k\ell}$ unless $i = k$ and $j = \ell$, 
in which case $N_{ijk\ell} = \binom{N_{ij}}{2} = \frac 12 N_{ij} N_{k\ell} - \frac 12 N_{ij}$.
So, we have
\begin{equation}
    \sum_{ijk\ell} N_{ijk\ell} t_{ijk\ell}
  = \frac 12 \sum_{ijk\ell} N_{ij} N_{k\ell} t_{ijk\ell} - \frac 12 \sum_{ij} N_{ij} t_{ijij}
\label{eq:second-moment-4}
\end{equation}

Recall that $\sum_i \pi_i M_{ik} = d$ for any
fixed $k$ and $\sum_k \pi_k M_{ik} = d$ for any fixed $i$. It follows that
$\sum_i \pi_i A_{ij} = \sum_j \pi_j A_{ij} = 0$.
Hence,
\[
 \sum_i \pi_i t_{ijk\ell} = - \sum_i \pi_i \frac{(A_{ik} A_{j\ell})^2}{2 d^2 n^2}.
\]
Writing $N_{ij} = \sqrt n X_{ij} + n \pi_i \pi_j$, we have
\begin{align*}
 \sum_{ijk\ell} N_{ij} N_{k\ell} t_{ijk\ell}
 &= n \sum_{ijk\ell} X_{ij} X_{k\ell} t_{ijk\ell}
 - \sum_{ijk\ell} \frac{(A_{ik} A_{j\ell})^2}{2 d^2 n^2} \left(n^{3/2} X_{ij} \pi_k \pi_\ell + n^{3/2} X_{k\ell} \pi_i \pi_j + n^2 \pi_i \pi_j \pi_k \pi_\ell\right) \\
 &= n \sum_{ijk\ell} X_{ij} X_{k\ell} t_{ijk\ell}
 - \sum_{ijk\ell} \frac{(A_{ik} A_{j\ell})^2}{2 d^2} \pi_i \pi_j \pi_k \pi_\ell
 + O(n^{-1/2}) \sum_{ij} |X_{ij}|,
\end{align*}
Next, note that
$t_{ijk\ell} = \frac{1}{dn} A_{ik} A_{j\ell} + O(n^{-2})$, and so
\begin{multline*}
 \sum_{ijk\ell} N_{ij} N_{k\ell} t_{ijk\ell}
 = \frac{1}{d} \sum_{ijk\ell} X_{ij} X_{k\ell} A_{ik} A_{j\ell}
 - \frac{1}{2 d^2} \sum_{ijk\ell} (A_{ik} A_{j\ell})^2 \pi_i \pi_j \pi_k \pi_\ell \\
 + O(n^{-1/2}) \sum_{ij} |X_{ij}|
 + O(n^{-1}) \left(\sum_{ij} |X_{ij}|\right)^2;
\end{multline*}
we recognize the second term as $2 \nu_2$, and the last two terms as being part of $\xi_n$.
This takes care of first term in~\eqref{eq:second-moment-4}; for the
second term,
\[
 \sum_{ij} N_{ij} t_{ijij}
 = \sqrt n \sum_{ij} X_{ij} t_{ijij} + n \sum_{ij} \pi_i \pi_j t_{ijij}
 = O(n^{-1/2}) \sum_{ij} |X_{ij}| + \frac{1}{d} \sum_{ij} A_{ii} A_{jj} \pi_i \pi_j + O(n^{-1});
\]
here, the second term is $2 \nu_1$ and the others are part of $\xi_n$.
\end{proof}

The following lemma gives a simpler form for $\nu_1$ and $\nu_2$ appearing above.
In particular, this will allow us to relate $\nu_1$ and $\nu_2$ to the eigenvalues of $T$.
We define $B \defas \frac{1}{d} \diag(\pi) A = T - \pi \otimes \trans{\1}$, where $^\top$ denotes the transpose and $\otimes$ denotes the Kronecker product.
\begin{lemma}\label{lem:nu}
Let $\nu_1$ and $\nu_2$ be as in Lemma~\ref{lem:Nijkl-Xijkl}. Then, we have:
 \begin{align*}
  \nu_1 &= -\frac d2 \tr(B)^2 \\
  \nu_2 &= -\frac {d^2}{4} \tr(B^2)^2.
 \end{align*}
\end{lemma}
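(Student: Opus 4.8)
The plan is to simply unfold the two trace expressions on the right-hand side using the definition $B = \frac1d\diag(\pi)A$, i.e.\ $B_{ij} = \frac1d \pi_i A_{ij}$, and match the result term-by-term against the formulas for $\nu_1$ and $\nu_2$ from Lemma~\ref{lem:Nijkl-Xijkl}. This is a purely algebraic manipulation; the one structural fact we need to invoke is that $A = M - d\J$ is symmetric, which is immediate since $M$ is symmetric and $\J$ is symmetric.

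For $\nu_1$: from $B_{ii} = \frac1d \pi_i A_{ii}$ we get $\tr(B) = \frac1d \sum_i \pi_i A_{ii}$, hence
\[
\tr(B)^2 = \frac1{d^2} \sum_{i,j} \pi_i \pi_j A_{ii} A_{jj},
\]
and multiplying by $-d/2$ yields exactly $-\frac1{2d}\sum_{ij} A_{ii} A_{jj} \pi_i \pi_j = \nu_1$.

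For $\nu_2$: writing $\tr(B^2) = \sum_{i,k} B_{ik} B_{ki} = \frac1{d^2}\sum_{i,k}\pi_i \pi_k A_{ik} A_{ki}$ and using $A_{ki} = A_{ik}$ gives $\tr(B^2) = \frac1{d^2}\sum_{i,k}\pi_i\pi_k A_{ik}^2$. Squaring this sum over an independent pair of index names $(i,k)$ and $(j,\ell)$ produces
\[
\tr(B^2)^2 = \frac1{d^4}\sum_{i,j,k,\ell}\pi_i\pi_j\pi_k\pi_\ell\, A_{ik}^2 A_{j\ell}^2,
\]
and multiplying by $-d^2/4$ recovers $-\frac1{4d^2}\sum_{ijk\ell} A_{ik}^2 A_{j\ell}^2 \pi_i\pi_j\pi_k\pi_\ell = \nu_2$.

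There is no real obstacle here: the only point requiring attention is keeping track of which sums are over coincident versus independent index pairs when squaring $\tr(B^2)$, and remembering to use the symmetry of $A$ so that the bare products $A_{ik}A_{ki}$ collapse to squares $A_{ik}^2$ (without symmetry one would only get $\sum \pi_i\pi_k A_{ik}A_{ki}$, which is not quite the stated form). The identity $B = T - \pi\otimes\trans{\1}$ noted before the lemma is not needed for this computation, though it is the reason the eigenvalues of $B$ are $0,\lambda_2,\dots,\lambda_q$, which is how $\nu_1$ and $\nu_2$ ultimately enter the product over $\psi(d\lambda_i\lambda_j)$ in Proposition~\ref{prop:second-moment}.
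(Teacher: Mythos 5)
Your proposal is correct and follows essentially the same route as the paper: both proofs unfold the traces via $B_{ij} = \frac1d\pi_i A_{ij}$, use symmetry of $A$ to collapse $A_{ik}A_{ki}$ into $A_{ik}^2$, and match against the formulas for $\nu_1,\nu_2$ from Lemma~\ref{lem:Nijkl-Xijkl}. The only cosmetic difference is that you expand the trace expressions forward while the paper rewrites $\nu_1,\nu_2$ in terms of $B$ and recognizes the traces; this is the same identity read in opposite directions.
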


\begin{proof}
  Note that $A_{ii} \pi_i = d B_{ii}$. Hence,
  \[
   \nu_1 = -\frac 1{2d} \sum_{ij} A_{ii} A_{jj} \pi_i \pi_j = -\frac d2 \sum_{ij} B_{ii} B_{jj}
   = -\frac d2 \tr(B)^2.
  \]
  Similarly, since $A_{ik} \pi_i = B_{ik}$ and $A_{ik} \pi_k = A_{ki} \pi_k = B_{ki}$,
  \[
   \nu_2 = -\frac{d^2}{4} \sum_{ijk\ell} B_{ik} B_{ki} B_{j\ell} B_{\ell j}
   = -\frac{d^2}{4} \tr\big((B^{\otimes 2})^2\big)
   = -\frac{d^2}{4} \tr(B^2)^2.
  \]
\end{proof}

The following lemma shows that $\xi_n$ in Lemma~\ref{lem:Nijkl-Xijkl} is very small in an appropriate sense.
\begin{lemma}\label{lem:xi}
Let $\xi_n$ be as in Lemma~\ref{lem:Nijkl-Xijkl}. If $a_n = o(n^{1/2})$ then $\E \exp(a_n \xi_n) \to 1$.
\end{lemma}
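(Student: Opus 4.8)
The expectation is over $\sigma,\tau$ drawn independently, each with i.i.d.\ coordinates of law $\pi$; this is the natural reading, since in Lemma~\ref{lem:Nijkl-Xijkl} the assignments are deterministic, and it is exactly the law that arises when one peels the factor $e^{\xi_n}$ off the second-moment sum of Proposition~\ref{prop:second-moment} by H\"older's inequality. Throughout I take $S = [n]$, the case needed here; the general case is identical provided $|[n]\setminus S| = O(1)$ (as it will be in Section~\ref{sec:proof-mainthm}), since then $X_{ij}$ still has mean $O(n^{-1/2})$. Under this law each $N_{ij} = \sum_{v=1}^n \indicator{\sigma_v = i,\ \tau_v = j}$ is $\Bin(n,\pi_i\pi_j)$, so $X_{ij}$ is centered with $\Var(X_{ij}) \le 1/4$, and Hoeffding's inequality yields the \emph{uniform-in-$n$} sub-Gaussian tail $\Pr(|X_{ij}| \ge s) \le 2 e^{-2 s^2}$. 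Write $Z_n := \sum_{ij} |X_{ij}|$; then $\E Z_n \le q^2/2$ and $Z_n$ is bounded in probability.

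First I would reduce everything to $Z_n$. Put $\eps_n := a_n/\sqrt n \to 0$. From the formula for $\xi_n$ in Lemma~\ref{lem:Nijkl-Xijkl}, using $n^{-1/2}\le 1$, there is a constant $C$ (depending only on $q$ and $M$) with
\[
  |a_n \xi_n| \;\le\; C\, a_n\!\left(n^{-1/2} Z_n + n^{-1} Z_n^2 + n^{-1}\right) \;\le\; C\eps_n(1 + Z_n + Z_n^2) \;\le\; C\eps_n(1 + 2 Z_n^2),
\]
so $\exp(-C\eps_n(1+2Z_n^2)) \le \exp(a_n\xi_n) \le \exp(C\eps_n(1+2Z_n^2))$. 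It therefore suffices to show that each of $\E\exp(\pm C\eps_n(1+2Z_n^2))$ tends to $1$, after which the claim follows by a squeeze.

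The crux is a uniform exponential-moment bound for $Z_n^2$. By Cauchy--Schwarz $Z_n^2 \le q^2\sum_{ij} X_{ij}^2$, hence by H\"older over the $q^2$ index pairs
\[
  \E\exp\!\big(\delta Z_n^2\big) \;\le\; \prod_{i,j}\Big(\E\exp\!\big(\delta q^4 X_{ij}^2\big)\Big)^{1/q^2},
\]
and the uniform sub-Gaussian tail makes each factor finite and bounded uniformly in $n$ whenever $\delta q^4 < 2$. Fix a small $\delta$ with $\delta q^4 < 1$, so that $\sup_n\E\exp(2\delta Z_n^2) =: K < \infty$. For $n$ large enough that $2C\eps_n \le \delta$ we get $\exp(C\eps_n(1+2Z_n^2)) \le e^{C\eps_n}\exp(\delta Z_n^2)$, so the family $\{\exp(C\eps_n(1+2Z_n^2))\}_n$ is bounded in $L^2$ and thus uniformly integrable; since $\eps_n(1+2Z_n^2)\to 0$ in probability, it converges to $1$ in probability, and uniform integrability upgrades this to $\E\exp(C\eps_n(1+2Z_n^2)) \to 1$. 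The matching lower bound $\E\exp(-C\eps_n(1+2Z_n^2)) \to 1$ is immediate from bounded convergence (the integrand lies in $(0,1]$ and tends to $1$ in probability). Squeezing completes the proof.

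The one substantive input is the uniform-in-$n$ exponential moment of $Z_n$, which is nothing more than Hoeffding's inequality together with H\"older; everything else --- the reduction to $Z_n$, the uniform-integrability upgrade, and the squeeze --- is routine. I do not expect a real obstacle here; the only point requiring a moment's care is that the expectation is taken under the \emph{unconditioned} product law (conditioning on $\Omega_n$ could shift the means of the $X_{ij}$), which is precisely the law in which the lemma gets used.
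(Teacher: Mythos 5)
Your proof is correct and takes essentially the same route as the paper's: both rest on Hoeffding's sub-Gaussian tail for the $X_{ij}$ (each being a centered, normalized sum of indicator variables), combine this with $a_n\xi_n \to 0$ in probability, and conclude by verifying uniform integrability of $\exp(a_n\xi_n)$. The only difference is in how UI is certified: the paper bounds the tail $\Pr(\xi_n \ge t) \le 2e^{-cn\min\{t,t^2\}}$ directly and splits off the large part of the integral, whereas you bound $|a_n\xi_n| \le C\eps_n(1+2Z_n^2)$, establish a uniform-in-$n$ exponential moment of $Z_n^2$ via Cauchy--Schwarz and H\"older over the $q^2$ index pairs, and deduce $L^2$-boundedness; your H\"older step is arguably a cleaner way of expressing what the paper phrases a bit loosely as stochastic domination by a function of the single variable $q^2|X_{11}|$.
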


\begin{proof}
 By the central limit theorem, each $X_{ij}$ has a limit in distribution as $n \to \infty$;
 hence $a_n \xi_n \to 0$ in probability. It is therefore enough to show that the sequence
 $\exp(a_n \xi_n)$ is uniformly integrable, but this follows from Hoeffding's inequality: since $X_{ij}$ is
 a centered, renormalized sum of independent indicator variables, Hoeffding's inequality
 implies that
 \[
   \Pr(|X_{ij}| \ge t) \le 2 e^{-t^2/2}.
 \]
 Let $X = X_{11}$; the definition of $\xi_n$ ensures that there is a constant $C$ such that
 $\xi_n$ is stochastically dominated by $C(Y + Y^2 + n^{-1})$, where $Y = n^{-1/2} q^2 |X|$.
 Hence,
 \[
   \Pr(\xi_n \ge C (t + t^2 + n^{-1})) \le \Pr(Y \ge t) \le 2 e^{-\frac{n t^2}{2 q^2}}.
 \]
 Since $q$ is a constant, this may be rearranged to state that
 \begin{equation}\label{eq:nu-bound}
   \Pr(\xi_n \ge t) \le 2 e^{-c n \min\{t, t^2\}}
 \end{equation}
 for some constant $c$ and all $t \ge 0$.
 Finally, for any $M \ge 0$
 \begin{align*}
   \E [e^{a_n \xi_n} 1_{\{e^{a_n \xi_n} \ge M\}}]
   &= \Pr(e^{a_n \xi_n} \ge M) + \int_M^\infty \Pr(e^{a_n \xi_n} \ge t)\, dt \\
   &= \Pr\left(\xi_n \ge \frac{\log M}{a_n}\right) + \int_M^\infty \Pr\left(\xi_n\ge \frac{\log t}{a_n}\right)\, dt
 \end{align*}
 If $a_n = o(n^{1/2})$ then~\eqref{eq:nu-bound} implies that both terms above converge to zero (uniformly in $n$) as
 $M \to \infty$.
\end{proof}

We now state the following three results before we prove the main result of this section.
The following proposition characterizes when the exponential of a quadratic form of a sequence of multinomial random variables is uniformly integrable.
Its proof can be found in Section~\ref{sec:UI-multinomials}.
 \begin{proposition}\label{prop:ui}
  Define $X_{ij}$ as in Lemma~\ref{lem:Nijkl-Xijkl}. Then
  \[
  \exp\left(\frac{1}{2d} \sum X_{ij} X_{k\ell} A_{ik} A_{j\ell}\right)
  \]
  is uniformly integrable if $Q(\pi, A/\sqrt{2d}) < 1$, and
  fails to be uniformly integrable if $Q(\pi, A/\sqrt{2d}) > 1$.
 \end{proposition}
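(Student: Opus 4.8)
The plan is to reduce the proposition to a large-deviations estimate for the multinomial vector $N=(N_{ij})$. Under the distribution implicit in Lemma~\ref{lem:Nijkl-Xijkl} and its application --- the two label sequences $\sigma,\tau$ are independent with i.i.d.\ $\pi$-distributed coordinates and are conditioned on the event $\Omega_n$, so that the row and column marginals of $N$ are pinned to within $a_n=o(n)$ of $n\pi$ --- $N$ is distributed as $\Multinom(n,p)$ conditioned on $\Omega_n$, where $p\defas\pi\otimes\pi$; since $\Pr(\Omega_n)\to1$, this conditioning changes every probability below by a bounded factor. Writing $\alpha\defas N/n$, so that $X_{ij}=\sqrt n\,(\alpha_{ij}-p_{ij})$, and setting $B(v)\defas\tfrac1{2d}\,\trans{v}(A\otimes A)\,v$ for $v\in\R^{q^2}$, the variable in question is $Z_n=\exp\big(n\,B(\alpha-p)\big)$, and everything comes down to comparing the exponent $B(\alpha-p)$ with the multinomial rate function $D(\alpha,p)$; recall $Q(\pi,A/\sqrt{2d})=\sup_{\alpha\in\Delta_{q^2}(\pi)}B(\alpha-p)/D(\alpha,p)$.

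For the ``if'' direction I would first reduce to a first-moment bound: since $L^{1+\eps}$-boundedness implies uniform integrability and $Z_n^{1+\eps}$ is exactly the variable obtained from $Z_n$ by replacing $A$ with $\sqrt{1+\eps}\,A$ --- for which $Q(\pi,\sqrt{1+\eps}\,A/\sqrt{2d})=(1+\eps)\,Q(\pi,A/\sqrt{2d})$ --- it is enough to show that $Q(\pi,A/\sqrt{2d})<1$ forces $\sup_n\E Z_n<\infty$, and then to apply this with $\eps$ small enough that $(1+\eps)Q<1$. Two structural facts about $B$ drive the estimate. First, because $A\pi=0$ and $\trans{\pi}A=0$, the form $B$ is unchanged when one adds to its argument any matrix of the shape $u\trans{\pi}+\pi\trans{w}$; hence $B(v)$ depends only on the projection of $v$ onto the subspace $\calT$ of $q\times q$ matrices all of whose row sums and column sums vanish. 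Second, the Hessian of $D(\cdot,p)$ at $p$ is $\diag(1/p_{ij})$; with respect to the associated inner product $\calT$ is orthogonal to the ``marginal'' directions, and $Q(\pi,A/\sqrt{2d})<1$ yields both the quantitative bound $B(v)<\tfrac12\,\trans{v}\diag(1/p)\,v$ for every nonzero $v\in\calT$ and the global bound $B(\alpha-p)\le Q\,D(\alpha,p)$ throughout $\Delta_{q^2}(\pi)$.

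I would then write $\E Z_n=\sum_\alpha\Pr(N=n\alpha)\,e^{nB(\alpha-p)}$ and split at $\|\alpha-p\|=\delta$ for a small constant $\delta$. On the tail $\|\alpha-p\|>\delta$, the method-of-types bound $\Pr(N=n\alpha)\le e^{-nD(\alpha,p)}$ together with a compactness argument --- using that the marginals of $\alpha$ lie within $o(1)$ of $\pi$, so that $B(\alpha-p)\le(Q+o(1))\,D(\alpha,p)$ uniformly while $D(\alpha,p)$ is bounded below --- shows the tail contributes $\mathrm{poly}(n)\,e^{-\Omega(n)}\to0$. On the bulk $\|\alpha-p\|\le\delta$, the sharp local limit estimate $\Pr(N=n\alpha)\le C\,n^{-(q^2-1)/2}\,e^{-nD(\alpha,p)}$, the expansion $D(\alpha,p)=\tfrac12\,\trans{(\alpha-p)}\diag(1/p)(\alpha-p)+O(\|\alpha-p\|^3)$, and the two facts above give $B(\alpha-p)-D(\alpha,p)\le-c\,\|\alpha-p\|^2$ for some $c>0$ once $\delta$ is small, so the bulk sum is at most $C\,n^{-(q^2-1)/2}\sum_{\|\alpha-p\|\le\delta}e^{-cn\|\alpha-p\|^2}$, a scaled Riemann sum for a convergent Gaussian integral over a $(q^2-1)$-dimensional lattice of spacing $n^{-1}$, hence $O(1)$. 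For the ``only if'' direction I would instead exhibit a single bad configuration: if $Q(\pi,A/\sqrt{2d})>1$, choose $\alpha^\ast\in\Delta_{q^2}(\pi)$ with $B(\alpha^\ast-p)>D(\alpha^\ast,p)$; since $\alpha^\ast$ has marginals $\pi$ the event $E_n=\{\|N/n-\alpha^\ast\|\le a_n/n\}$ lies in $\Omega_n$ for large $n$, the matching lower bound in the local limit theorem gives $\Pr(E_n)\ge e^{-nD(\alpha^\ast,p)(1+o(1))}$, and $Z_n\ge e^{nB(\alpha^\ast-p)(1+o(1))}$ on $E_n$, so $\E Z_n\ge e^{n(B(\alpha^\ast-p)-D(\alpha^\ast,p))(1+o(1))}\to\infty$ and $\{Z_n\}$ fails to be even $L^1$-bounded.

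The step I expect to be the main obstacle is reconciling the two state spaces: $N$ naturally lives on $\Delta_{q^2}$ (with marginals only approximately $\pi$), whereas $Q$ is a supremum over $\Delta_{q^2}(\pi)$. The conditioning on $\Omega_n$ confines $\alpha$ to have marginals within $o(1)$ of $\pi$, and a continuity/compactness argument then reduces the tail bound to the value of the ratio $B(\alpha-p)/D(\alpha,p)$ on $\Delta_{q^2}(\pi)$ itself; the residual $o(1)$ slack in the marginals is harmless precisely because $B$ is invariant under the marginal directions ($A\pi=\trans{\pi}A=0$). Conceptually this matches the Pythagorean identity $D(\alpha,\pi\otimes\pi)=D(\alpha,\alpha^{(1)}\otimes\alpha^{(2)})+D(\alpha^{(1)},\pi)+D(\alpha^{(2)},\pi)$, with $\alpha^{(1)},\alpha^{(2)}$ the marginals of $\alpha$, which says that among joint laws with a given dependence structure $D(\cdot,p)$ is smallest when the marginals equal $\pi$. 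A secondary technical point is that the bulk estimate has to come out $O(1)$, not merely polynomial in $n$, which is why one needs the $n^{-(q^2-1)/2}$ local-limit prefactor and not just $\Pr(N=n\alpha)\le e^{-nD(\alpha,p)}$.
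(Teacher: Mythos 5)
Your proposal follows the same overall strategy as the paper's proof of Proposition~\ref{prop:UI-multinomial}: convert uniform integrability to an $L^{1+\eps}$ bound by rescaling $A\mapsto\sqrt{1+\eps}A$, write the first moment as a lattice sum $\sum_\alpha\Pr(N=n\alpha)\,e^{nB(\alpha-p)}$, estimate the multinomial mass by Stirling, and compare the exponent $B(\alpha-p)$ to the rate function $D(\alpha,p)$ via the hypothesis $Q<1$; the converse exhibits a witness $\alpha^\ast$ and uses the matching local-limit lower bound. Where you differ is in packaging and in one substantive technical point. The paper bundles the whole estimate into a single global inequality $D(\alpha,p)\ge(1+\eps)\,(\alpha-p)^T A(\alpha-p)+\eps|\alpha-p|^2$ valid on $\Delta_{q^2}(\pi)$ and then invokes Lemma~\ref{lem:UI-multinomial}, whereas you split bulk versus tail and handle each with the appropriate tool (local-limit prefactor and Taylor expansion in the bulk, compactness and method-of-types in the tail); this is more work but is more transparent, and the two are equivalent modulo bookkeeping of the $n^{-(q^2-1)/2}$ Stirling prefactor, which the paper's $\asymp$ notation silently absorbs. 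The substantive point you handle more carefully is the passage from $\Delta_{q^2}(\pi)$ to $\Omega_n$: the paper asserts that any $\alpha\in\Omega_n$ has a nearby $\tilde\alpha\in\Delta_{q^2}(\pi)$ with $|\alpha-\tilde\alpha|^2=o(n^{-1})$, but since $a_n=\omega(\sqrt n)$ one only gets $|\alpha-\tilde\alpha|=O(a_n/n)=o(1)$, and the exponent $nD(\alpha,p)$ can then drift by $O(a_n)\to\infty$; the transfer of the key inequality as stated therefore needs an extra argument. Your explicit observation that $A\pi=\trans{\pi}A=0$ makes $B$ invariant under perturbations of the form $u\trans{\pi}+\pi\trans{w}$ --- together with the $\diag(1/p)$-orthogonality of $\calT$ and the marginal directions, and the Pythagorean decomposition of $D(\cdot,p)$ --- is precisely what makes the $o(1)$ marginal slack harmless, and it fills in this step cleanly. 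So the argument is sound, a bit longer than the paper's, and actually repairs a small imprecision in it.
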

 
Using H\"older's inequality, it is fairly straightforward to introduce the $\xi_n$ term:

\begin{lemma}\label{lem:UI-final}
  Define $X_{ij}$ as in Lemma~\ref{lem:Nijkl-Xijkl}. Then
  \[
  \exp\left(\frac{1}{2d} \sum X_{ij} X_{k\ell} A_{ik} A_{j\ell} + \xi_n\right)
  \]
  is uniformly integrable if $Q(\pi, A/\sqrt{2d}) < 1$, and
  fails to be uniformly integrable if $Q(\pi, A/\sqrt{2d}) > 1$.
\end{lemma}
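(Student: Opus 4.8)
The plan is to write the quantity as a product $Z_nW_n$ with $Z_n := \exp\!\left(\frac{1}{2d}\sum_{ijk\ell}X_{ij}X_{k\ell}A_{ik}A_{j\ell}\right)$ and $W_n := e^{\xi_n}$, control the two factors separately, and recombine them with H\"older's inequality. The inputs will be: Proposition~\ref{prop:ui} for $Z_n$; the estimate from the proof of Lemma~\ref{lem:xi}, which in fact controls $|\xi_n|$ and so applies to $-\xi_n$ as well as to $\xi_n$, giving $\E e^{c\xi_n}\to 1$ and $\E e^{-c\xi_n}\to 1$ for every fixed constant $c>0$, so that $W_n$ and $W_n^{-1}$ are bounded in $L^p$ for every finite $p$; and the elementary scaling identity $Q(\pi, cB)=c^2\,Q(\pi,B)$, immediate from Definition~\ref{def:Q} since $(cB)\otimes(cB)=c^2(B\otimes B)$. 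The latter says that replacing $A$ by $\sqrt t\,A$ turns $Z_n$ into $Z_n^{\,t}$ and turns $Q(\pi,A/\sqrt{2d})$ into $t\,Q(\pi,A/\sqrt{2d})$, so Proposition~\ref{prop:ui} applied to $\sqrt t\,A$ tells us exactly when powers of $Z_n$ are uniformly integrable.

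For the direction $Q:=Q(\pi,A/\sqrt{2d})<1\Rightarrow$ uniform integrability, I would pick $\epsilon>0$ with $(1+\epsilon)Q<1$. Then $Z_n^{1+\epsilon}$ is precisely the object of Proposition~\ref{prop:ui} with $A$ replaced by $\sqrt{1+\epsilon}\,A$, whose associated value is $Q(\pi,\sqrt{1+\epsilon}\,A/\sqrt{2d})=(1+\epsilon)Q<1$; hence $Z_n^{1+\epsilon}$ is uniformly integrable, and in particular $\sup_n\E Z_n^{1+\epsilon}<\infty$. Now choose a small $\delta>0$ and H\"older conjugates $(r,r')$ with $(1+\delta)r\le 1+\epsilon$; then
\[
\E\left[(Z_nW_n)^{1+\delta}\right]\;\le\;\left(\E Z_n^{(1+\delta)r}\right)^{1/r}\left(\E W_n^{(1+\delta)r'}\right)^{1/r'}
\]
is bounded uniformly in $n$ --- the first factor by the previous line, the second because $(1+\delta)r'$ is a fixed constant and $\E W_n^{\,p}\to 1$. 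Boundedness in $L^{1+\delta}$ for some $\delta>0$ implies uniform integrability, which closes this direction; this is the easy ``H\"older'' half.

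For the direction $Q>1\Rightarrow$ failure of uniform integrability I would argue directly, piggybacking on the construction inside the proof of Proposition~\ref{prop:ui}. That proof witnesses the failure of uniform integrability of $Z_n$ through a sequence of events $F_n$ --- shrinking neighbourhoods of the rescaled maximizer $\alpha^*\in\Delta_{q^2}(\pi)$ of the ratio defining $Q$ --- with $\Pr(F_n)\to 0$ but $\liminf_n\E[Z_n\mathbbm{1}_{F_n}]>0$. Since $\Delta_{q^2}(\pi)$ is bounded, on $F_n$ one has $N_{ij}=n\pi_i\pi_j+O(n)$, hence $\sum_{ij}|X_{ij}|=O(\sqrt n)$, hence $|\xi_n|\le c_0$ for a constant $c_0$ and all large $n$. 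Consequently $Z_nW_n=Z_ne^{\xi_n}\ge e^{-c_0}Z_n$ on $F_n$, so $\liminf_n\E[Z_nW_n\mathbbm{1}_{F_n}]>0$ while $\Pr(F_n)\to 0$, which is exactly non-uniform-integrability of $Z_nW_n$.

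I expect this second direction to be the only delicate point. A naive symmetric argument --- ``$Z_nW_n$ uniformly integrable $\Rightarrow Z_n$ uniformly integrable because $W_n^{-1}$ has all moments'' --- fails, since on the rare event $\{\xi_n\ll 0\}$ the factor $W_n^{-1}=e^{-\xi_n}$ is large while $Z_nW_n$ is actually \emph{smaller} than $Z_n$ there, so no H\"older splitting recovers control of $Z_n$ on that event (and $Z_n$ can be as large as $e^{\Theta(n)}$ there, precisely when $Q>1$). The resolution is the asymmetric observation above: the events that witness failure of uniform integrability of $Z_n$ are confined to the region $\sum_{ij}|X_{ij}|=O(\sqrt n)$, where $e^{\xi_n}$ is pinned between two positive constants, so the correction is harmless. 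The only routine items to verify are that the $O$-constants (in the definition of $\xi_n$, in the tail bound from Lemma~\ref{lem:xi}, and in the radius of $F_n$) are uniform in $n$, which they are since $q$, $A$ and $d$ are fixed.
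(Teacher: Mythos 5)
Your proof is correct, and the first direction ($Q<1 \Rightarrow$ uniform integrability) is essentially the paper's own H\"older decomposition, differing only in how the conjugate exponents are chosen (the paper uses the $n$-dependent conjugate pair $a_n=n^{1/3}$, $b_n=a_n/(a_n-1)$, while you fix constant exponents). The second direction is where you genuinely diverge: the paper dispatches it in one line by invoking a ``reverse H\"older'' variant and omitting the details, whereas you construct an explicit witness by reusing the bad events from the proof of Proposition~\ref{prop:ui} and checking that $\xi_n$ is bounded there. That is a legitimate way to fill a gap the paper leaves open, and the scaling identity $Q(\pi,cB)=c^2Q(\pi,B)$ that you use to translate ``$\epsilon$ of slack in the exponent'' into ``$\epsilon$ of slack in $Q$'' is exactly the observation the paper also relies on when it writes $Q(\pi,\sqrt{1+\epsilon}A/\sqrt{2d})<1$.

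There is, however, a simplification that both you and the paper miss, and which makes your closing caveat about the ``naive symmetric argument'' unnecessary. From Lemma~\ref{lem:Nijkl-Xijkl}, $\xi_n = O(n^{-1/2})\sum_{ij}|X_{ij}| + O(n^{-1})\bigl(\sum_{ij}|X_{ij}|\bigr)^2 + O(n^{-1})$ with implied constants depending only on $q$, $A$, $d$, $\pi$. Since $N_{ij}\in\{0,\dots,n\}$ and $0\le n\pi_i\pi_j\le n$, one has the deterministic bound $|X_{ij}|\le\sqrt n$, hence $\sum_{ij}|X_{ij}|\le q^2\sqrt n$ surely; substituting gives $|\xi_n|\le C(q,A,d,\pi)$ for \emph{all} outcomes and all $n$. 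So $e^{\xi_n}$ is sandwiched between two positive constants, and both directions of the lemma follow from Proposition~\ref{prop:ui} with no H\"older or reverse-H\"older argument at all: $Z_n W_n$ is uniformly integrable iff $Z_n$ is, since each is bounded above by a constant multiple of the other. In particular your worry that ``no H\"older splitting recovers control of $Z_n$ on $\{\xi_n\ll 0\}$'' does not arise, because $\xi_n\ll 0$ is impossible; the symmetric argument $Z_n = (Z_n W_n)W_n^{-1}\le e^{C}Z_n W_n$ would also have closed the $Q>1$ direction. (The paper's Lemma~\ref{lem:xi} is genuinely needed elsewhere --- when the exponent $a_n$ multiplying $\xi_n$ grows --- but here the coefficient is fixed, and uniform boundedness suffices.) None of this affects the correctness of what you wrote; it just shows that the lemma is softer than either you or the authors treat it.
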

\begin{proof}
Supposing that $Q(\pi, A/\sqrt{2d}) < 1$, we find some $\epsilon > 0$ such that
$Q(\pi, \sqrt{1+\epsilon} A / \sqrt{2d}) < 1$.
Set $a_n = n^{1/3}$ and $b_n = \frac{a_n}{a_n - 1}$ to be the H\"older conjugate of $a_n$. Setting
\begin{align}
W := \vec(X) \in \R^{q^2},
\label{eqn:defn-W}
\end{align}
H\"older's inequality and Lemma~\ref{lem:xi} give
\begin{align*}
  &\E_{\sigma,\tau} \exp\left((1+\frac{\epsilon}{2})\left(\frac 1{2d} \sum_{ijk\ell} X_{ij} X_{k\ell} A_{ik} A_{j\ell} + \xi_n\right)\right)\\
 &\le
 \left(\E_{\sigma,\tau} \exp\left(\frac{(1+\frac{\epsilon}{2})b_n}{2d} W^T (A^{\otimes 2}) W\right)\right)^{1/b_n}
 \left(\E \exp((1+\frac{\epsilon}{2})a_n \xi_n)\right)^{1/a_n} \\
 &\le
 \left(\E_{\sigma,\tau} \exp\left(\frac{(1+\frac{\epsilon}{2})b_n}{2d} W^T (A^{\otimes 2}) W\right)\right)^{1/b_n}.
\end{align*}
To check uniform integrability, we apply Proposition~\ref{prop:ui}. For sufficiently large $n$,
we have $b_n \le \frac{1 + \epsilon}{1+\frac{\epsilon}{2}}$ and
\[
 \exp\left(\frac {(1+\frac{\epsilon}{2})b_n}{2d} W^T A^{\otimes 2} W\right)
 \le \max\left\{1, \exp\left(\frac {(1 + \epsilon)}{2d} W^T A^{\otimes 2} W\right)\right\}.
\]
We see from the fact that $Q(\pi, \sqrt{1+\epsilon} A / \sqrt{2d}) < 1$ and
Proposition~\ref{prop:ui} that the right hand side above has a finite expectation.

To summarize, we have shown that if $Z = \exp(\frac{1}{2d} \sum X_{ij} X_{k\ell} A_{ik} A_{j\ell} + \xi_n)$ then
$\E Z^{(1+\epsilon/2)} < \infty$ for some $\epsilon > 0$. It follows that $Z$ is uniformly integrable, as claimed.

To show that $Q(\pi, A/\sqrt{2d}) > 1$ implies non-uniform integrability, requires an almost identical argument,
but using the reverse H\"older inequality instead of the usual H\"older inequality. We omit the details.
\end{proof}

The following lemma calculates the expected value of the exponential of a quadratic form of a Gaussian random vector.
\begin{lemma}\label{lem:gaussian-quadratic-limit}
  Take $Z \sim \normal(0, \Sigma)$, where $\Sigma = \diag(\pi)^{\otimes 2} - \bigl(\pi \otimes \pi\bigr)^{\otimes 2}$, where
  $a \otimes b$ denotes the Kronecker product of $a$ and $b$, and $a^{\otimes 2}$ denotes the outer product of $a$ with itself.
  Recall that $\lambda_i$ denote the eigenvalues of $T$, with $1=\lambda_1 \geq \abs{\lambda_2} \geq \cdots \geq \abs{\lambda_q}$.
  If $d \lambda_2^2 < 1$ then
  \[
   \E \exp\left(\frac{1}{2d} Z^T A^{\otimes 2} Z\right) = \prod_{i,j=2}^q \frac{1}{\sqrt{1-d\lambda_i\lambda_j}}.
  \]
  Otherwise, $\E \exp\left(\frac{1}{2d} Z^T A^{\otimes 2} Z\right) = \infty$.
\end{lemma}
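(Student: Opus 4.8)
The plan is to reduce this to the classical formula for the moment generating function of a Gaussian quadratic form and then to read off the determinant and the finiteness condition from the eigenvalues of $T$. Recall the standard fact: for $Z \sim \normal(0,\Sigma)$ and a symmetric matrix $R$, write $Z = \Sigma^{1/2}\zeta$ with $\zeta$ standard Gaussian and diagonalize the symmetric matrix $S := \Sigma^{1/2} R \Sigma^{1/2}$; then $\E\exp(\tfrac12 Z^\top R Z) = \E\exp(\tfrac12 \zeta^\top S\zeta) = \prod_k (1-s_k)^{-1/2}$, where the $s_k$ are the eigenvalues of $S$, provided every $s_k < 1$ (this comes from the one-dimensional identity $\frac{1}{\sqrt{2\pi}}\int e^{(s-1)z^2/2}\,dz = (1-s)^{-1/2}$), and $\E\exp(\tfrac12 Z^\top R Z) = \infty$ as soon as some $s_k \ge 1$. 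When all $s_k < 1$, this equals $\det(I-S)^{-1/2} = \det(I - \Sigma R)^{-1/2}$ via $\det(I-XY) = \det(I-YX)$. First I would apply this with $R = \tfrac{1}{d} A^{\otimes 2}$.

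Next I would compute $\Sigma R$ explicitly. Using $\Sigma = \diag(\pi)^{\otimes 2} - (\pi\otimes\pi)^{\otimes 2}$ together with the Kronecker identities $(\diag(\pi)\otimes\diag(\pi))(A\otimes A) = (\diag(\pi)A)^{\otimes 2}$ and $(\pi\otimes\pi)^\top A^{\otimes 2} = (\pi^\top A)\otimes(\pi^\top A)$, and the fact (recorded in the proof of Lemma~\ref{lem:Nijkl-Xijkl}) that $\pi^\top A = 0$, one gets $\Sigma A^{\otimes 2} = (\diag(\pi)A)^{\otimes 2} = (dB)^{\otimes 2} = d^2 B^{\otimes 2}$, where $B = \tfrac1d \diag(\pi)A = T - \pi\otimes\trans{\1}$. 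Hence $\Sigma R = d\,B^{\otimes 2}$, and also $S = \Sigma^{1/2}R\Sigma^{1/2}$ has the same nonzero eigenvalues as $\Sigma R = dB^{\otimes 2}$ (since $\Sigma R$ and $S$ are $XY$ and $YX$ for $X = \Sigma^{1/2}$, $Y = \Sigma^{1/2}R$), its remaining eigenvalues being $0 < 1$.

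It then remains to identify the spectrum of $B^{\otimes 2}$. Since $1$ is a simple eigenvalue of $T$ (Perron--Frobenius) with right eigenvector $\pi$ and left eigenvector $\1$, normalized by $\trans{\1}\pi = 1$, the rank-one correction in $B = T - \pi\trans{\1}$ removes exactly this eigenvalue, so $B$ has eigenvalues $0,\lambda_2,\dots,\lambda_q$. Thus $B^{\otimes 2}$ has eigenvalues $\mu_i\mu_j$, $i,j\in[q]$, with $\mu_1 = 0$ and $\mu_i = \lambda_i$ for $i\ge 2$, and the eigenvalues of $S$ are $\{d\lambda_i\lambda_j : 2\le i,j\le q\}$ together with zeros. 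Because $|\lambda_i|\le|\lambda_2|$ for $i\ge 2$, the largest of these is $d\lambda_2^2$, attained at $i=j=2$, so the condition that every eigenvalue of $S$ be $<1$ is precisely $d\lambda_2^2 < 1$. Under this condition, $\det(I-\Sigma R)^{-1/2} = \prod_{i,j=1}^q (1-d\mu_i\mu_j)^{-1/2} = \prod_{i,j=2}^q (1 - d\lambda_i\lambda_j)^{-1/2}$, since every factor with $i=1$ or $j=1$ equals $1$; and if $d\lambda_2^2 \ge 1$ the expectation is $+\infty$. This is exactly the claimed dichotomy.

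I expect the only genuinely delicate point to be the bookkeeping around the singular covariance $\Sigma$: one must be careful that the Gaussian integral is governed by the symmetric matrix $S = \Sigma^{1/2}R\Sigma^{1/2}$, whose value and finiteness are in turn controlled by the nonzero eigenvalues of the easier-to-diagonalize but non-symmetric product $\Sigma R = dB^{\otimes 2}$, and that the boundary case $d\lambda_2^2 = 1$ is correctly assigned to the divergent side. The Kronecker-algebra of the second step is routine once $\pi^\top A = 0$ is in hand, and the eigenvalue count of $B$ follows from the simplicity of the top eigenvalue of $T$.
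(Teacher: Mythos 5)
Your proposal is correct and follows essentially the same route as the paper: reduce to the Gaussian quadratic-form MGF identity $\E\exp(\tfrac12 Z^\top R Z) = \prod_k (1-s_k)^{-1/2}$ (the paper simply cites Mathai--Provost where you derive it via $Z = \Sigma^{1/2}\zeta$); compute $\Sigma A^{\otimes 2}$ via the Kronecker mixed-product identity, killing the $(\pi\otimes\pi)^{\otimes 2}$ term using $\pi^\top A = 0$; and identify the nonzero eigenvalues of $dB^{\otimes 2}$ as $\{d\lambda_i\lambda_j : i,j\geq 2\}$ after observing that $B = T - \pi\1^\top$ replaces the Perron eigenvalue $1$ of $T$ by $0$. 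One small remark worth noting: the paper's own proof momentarily states that $\1$ is the right eigenvector and $\pi$ the left eigenvector of $T$, but since $T = \tfrac1d\diag(\pi)M$ one has $T\pi = \pi$ and $\1^\top T = \1^\top$ (so $\pi$ is right, $\1$ is left, exactly as you wrote); the paper's conclusion about $B$'s spectrum is nevertheless unaffected, since the rank-one subtraction $\pi\1^\top$ does annihilate the eigenvalue-$1$ eigenspace under the correct convention. Your explicit handling of the singular $\Sigma$ through $S = \Sigma^{1/2}R\Sigma^{1/2}$ and the observation that zero eigenvalues contribute trivial factors are both sound, as is the identification of $d\lambda_2^2$ as the largest eigenvalue (hence the sharp finiteness condition, with the boundary on the divergent side).
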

\begin{proof}
A standard computation (see, e.g.~\cite{MathaiProvost:92}) shows that if $\mu_1, \dots, \mu_s$ denote the
eigenvalues of $\Sigma \tilde A $ then $\E \exp(Z^T \tilde A Z / 2) = \prod_i \frac{1}{\sqrt{1-\mu_i}}$.
Now,
\[
\Sigma A^{\otimes 2} = \bigl(\diag(\pi)^{\otimes 2} - \bigl(\pi \otimes \pi\bigr)^{\otimes 2}\bigr) A^{\otimes 2} 
= (\diag(\pi) A)^{\otimes 2} - (\pi \trans{\pi} A)^{\otimes 2}.
\]
Recall, however, that $A \pi = 0$. Hence, we are interested in the eigenvalues of
$(\diag(\pi) A)^{\otimes 2} = (dB)^{\otimes 2}$. Since the top eigenvalue of $T$ is 1 (with $1$
as its right-eigenvector and $\pi$ as its left-eigenvector), we see that
if $\lambda_1,\cdots,\lambda_q$ are the eigenvalues of $T$ with $\lambda_1 = 1$, then
\[
 \{d \lambda_i \lambda_j: i, j = 2, \dots, q\}
\]
are the eigenvalues of $\frac 1d \Sigma (A \otimes A)$.
\end{proof}

\begin{proof}[Proof of Proposition~\ref{prop:second-moment}]
First of all, note that
\[
 \frac{d\hat \P_n(G, \sigma)}{d\Q_n} = \frac{Y_n}{\P_n(\Omega_n)} = (1 + o(1)) Y_n.
\]
Hence, it suffices to compute the limit of $\E_{\Q_n} Y_n^2$.

From Lemma~\ref{lem:second-moment-simpl}, we see that we need to calculate the limit of the quantity
\begin{align*}
  \E_{\sigma,\tau \in \Omega_n} \exp\left(\frac 1{2d} \sum_{ijk\ell} X_{ij} X_{k\ell} A_{ik} A_{j\ell} + \xi_n\right).
\end{align*}
Lemma~\ref{lem:UI-final} establishes that the above sequence is uniformly integrable.

Now, note that $(N_{ij})_{i,j=1}^q$ is distributed as a multinomial random vector with $n$ trials and probabilities
$\pi_i \pi_j$. In particular, $\frac{1}{n}\E N_{ij} = \pi_i \pi_j$, $\frac{1}{n} \Var(N_{ij}) = \pi_i \pi_j - (\pi_i \pi_j)^2$, and
$\frac{1}{n} \Cov(N_{ij} N_{k\ell}) = -\pi_i \pi_j \pi_k \pi_\ell$ if $\{i,j\} \ne \{k,\ell\}$.
Since $X_{ij} = n^{-\frac{1}{2}}\left(N_{ij} - n \pi_i \pi_j\right)$, central limit theorem implies that $W \defas \vec(X) \in \R^{k^2}$ converges in distribution to a Gaussian random vector,
$Z$ with mean $0$ and covariance matrix $\diag(\pi)^{\otimes 2} - \pi^{\otimes 4}$.
Using Lemma~\ref{lem:gaussian-quadratic-limit} now gives us
\begin{equation}\label{eq:second-moment-with-nu}
 \E_{\Q_n} Y_n^2 \to \exp(\nu_1 + \nu_2) \prod_{i,j=2}^q \frac{1}{\sqrt{1-d\lambda_i \lambda_j}}.
\end{equation}
Going back to Lemma~\ref{lem:nu}, we have
\[
 \nu_1 = -\frac{d}{2} \tr(B)^2 = -\frac 12 \sum_{i,j=2}^q d\lambda_i \lambda_j
\]
and
\[
 \nu_2 = -\frac{d^2}{4} \tr(B^2)^2 = -\frac 14 \sum_{i,j=2}^q (d \lambda_i \lambda_j)^2.
\]
Hence, the right hand side of~\eqref{eq:second-moment-with-nu} is equal to
\[
 \prod_{i,j} \psi(d\lambda_i\lambda_j),
\]
as claimed.
\end{proof}

\section{Proof of Theorem~\ref{thm:non-distinguish}}
\label{sec:proof-mainthm}
\subsection{Non-distinguishability}
\label{sec:nondistinguish}
In this section, we use Proposition~\ref{prop:second-moment} to the contiguity claim in
Theorem~\ref{thm:non-distinguish}. Our main tool is the conditional second
moment method, which was originally developed by \cite{RW:92} in their study of Hamiltonian cycles in $d$-regular
graphs. \cite{Janson:95} was the first to apply this method for proving
contiguity.  We use a formulation from~\cite[Theorem 4.1]{Wormald}:

\begin{theorem}\label{thm:conditional-second-mom}
 Consider two sequences $\P_n, \Q_n$ of probability distributions on a sequence $\Omega_n$ of probability spaces.
 Suppose that there exist random variables $\{X_{m,n}: m \ge 3\}$, where $X_{m,n}$ is defined on $\Omega_n$,
 such that for every $m$,
 \begin{align}
  X_{m,n} &\toD \Pois(\mu_m) \text{ under $\Q_n$ as $n \to \infty$; and} \label{eq:cycles-Qn} \\
  X_{m,n} &\toD \Pois(\mu_m(1 + \delta_m)) \text{ under $\P_n$ as $n \to \infty$.}\label{eq:cycles-Pn}
 \end{align}
 Suppose also that for any $m^*$, the collection $X_{3,n}, \dots, X_{m^*,n}$ are asymptotically independent
 as $n \to \infty$ under both $\P_n$ and $\Q_n$, in the sense that every joint moment of $X_{3,n}, \dots, X_{m^*,n}$
 converges to the same joint moment of the appropriate independent Poisson variables.
 If
 \begin{equation}\label{eq:second-moment-cond}
  \E_{\Q_n} \left(\frac{\P_n}{\Q_n}\right)^2 \le (1 + o(1)) \exp\left(\sum_{m \ge 3} \mu_m \delta_m^2\right)
  < \infty
 \end{equation}
 then $\P_n$ and $\Q_n$ are contiguous.
\end{theorem}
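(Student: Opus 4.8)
I would prove this general ``small subgraph conditioning'' statement by the Robinson--Wormald method. Write $f_n = \P_n/\Q_n$ for the likelihood ratio (Radon--Nikodym derivative). By Le Cam's first lemma, contiguity of $\P_n$ and $\Q_n$ follows from two facts: (i) the family $\{f_n\}$ is uniformly integrable under $\Q_n$, which yields $\P_n(A_n)\to 0$ whenever $\Q_n(A_n)\to 0$; and (ii) $f_n$ converges in distribution under $\Q_n$ to a random variable $W$ with $\E W = 1$ and $W>0$ almost surely, the positivity giving the reverse implication $\Q_n(A_n)\to 0$ whenever $\P_n(A_n)\to 0$ (via $\Q_n(A_n)\le c^{-1}\P_n(A_n)+\Q_n(f_n<c)$). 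Uniform integrability in (i) will in fact drop out of the stronger bound $\sup_n\E_{\Q_n}f_n^2<\infty$, obtained en route, so the real work is (ii) together with $\E_{\Q_n}f_n^2\to\E W^2$.

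Set $\delta := \sum_{m\ge 3}\mu_m\delta_m^2$, which is finite by hypothesis; throughout we assume $1+\delta_m>0$, as holds in every application of the method. The candidate limit is $W := \prod_{m\ge 3}e^{-\mu_m\delta_m}(1+\delta_m)^{X_m}$ with the $X_m\sim\Pois(\mu_m)$ independent. Using the Poisson identity $\E(1+\delta_m)^{kX_m}=\exp(\mu_m((1+\delta_m)^k-1))$, the partial products $W^{(M)}:=\prod_{3\le m\le M}e^{-\mu_m\delta_m}(1+\delta_m)^{X_m}$ satisfy $\E W^{(M)}=1$ and $\E(W^{(M)})^2=\exp(\sum_{3\le m\le M}\mu_m\delta_m^2)\le e^{\delta}$, so $(W^{(M)})_M$ is an $L^2$-bounded martingale; it converges a.s.\ and in $L^2$ to $W$, and a three-series argument applied to $\sum_m(X_m\log(1+\delta_m)-\mu_m\delta_m)$ shows $W>0$ a.s.

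Next I would introduce the data-side truncation $W_n^{(M)}:=\prod_{3\le m\le M}e^{-\mu_m\delta_m}(1+\delta_m)^{X_{m,n}}$ and compute the three quantities $\E_{\Q_n}(W_n^{(M)})^2$, $\E_{\Q_n}[f_n W_n^{(M)}]$, and $\E_{\Q_n}f_n^2$. Asymptotic independence and $X_{m,n}\toD\Pois(\mu_m)$ under $\Q_n$ give $\E_{\Q_n}(W_n^{(M)})^2\to\E(W^{(M)})^2=e^{\delta_M}$, where $\delta_M:=\sum_{3\le m\le M}\mu_m\delta_m^2$. For the mixed term, use $\E_{\Q_n}[f_n h(X_{3,n},\dots,X_{M,n})]=\E_{\P_n}h(X_{3,n},\dots,X_{M,n})$, pass to the limit via $X_{m,n}\toD\Pois(\mu_m(1+\delta_m))$ under $\P_n$ and asymptotic independence, and use $\E(1+\delta_m)^{Y_m}=\exp(\mu_m(1+\delta_m)\delta_m)$ for $Y_m\sim\Pois(\mu_m(1+\delta_m))$ to get $\E_{\Q_n}[f_n W_n^{(M)}]\to e^{\delta_M}$ as well. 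Then $0\le\E_{\Q_n}(f_n-W_n^{(M)})^2=\E_{\Q_n}f_n^2-2\E_{\Q_n}[f_nW_n^{(M)}]+\E_{\Q_n}(W_n^{(M)})^2$ forces $\liminf_n\E_{\Q_n}f_n^2\ge e^{\delta_M}$ for every $M$, hence $\liminf_n\E_{\Q_n}f_n^2\ge e^{\delta}$; combined with the hypothesis $\limsup_n\E_{\Q_n}f_n^2\le e^{\delta}$ this gives $\E_{\Q_n}f_n^2\to e^{\delta}$, so $\limsup_n\E_{\Q_n}(f_n-W_n^{(M)})^2=e^{\delta}-e^{\delta_M}\to 0$ as $M\to\infty$. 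A diagonal argument with the triangle inequality for the bounded-Lipschitz metric, combining this $L^2$-approximation with $W_n^{(M)}\toD W^{(M)}$ and $W^{(M)}\to W$, yields $f_n\toD W$ under $\Q_n$; and $\sup_n\E_{\Q_n}f_n^2<\infty$ gives uniform integrability. Le Cam's lemma then gives contiguity.

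The step I expect to be the main obstacle is making the moment limits $\E_{\Q_n}(W_n^{(M)})^2\to\E(W^{(M)})^2$ and $\E_{\Q_n}[f_nW_n^{(M)}]\to e^{\delta_M}$ rigorous: the functionals $(1+\delta_m)^{X_{m,n}}$ are unbounded in $X_{m,n}$, so convergence in distribution of the $X_{m,n}$ alone does not suffice --- one needs a uniform-integrability input for these exponential functionals. This is exactly what the hypothesis buys, since convergence of \emph{all} joint moments of $(X_{3,n},\dots,X_{m^*,n})$ to those of independent Poissons bounds $\E_{\Q_n}(1+\delta_m)^{kX_{m,n}}$ uniformly in $n$ for each $k$ and lets one approximate $t\mapsto(1+\delta_m)^t$ by polynomials with controlled tails; one must also verify that the tilting identity $\E_{\Q_n}[f_n h(X_{\cdot,n})]=\E_{\P_n}h(X_{\cdot,n})$ remains valid given that $\P_n,\Q_n$ live on the spaces $\Omega_n$, and that the interchange of the limits $n\to\infty$ and $M\to\infty$ is legitimate. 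Everything else is bookkeeping with Poisson generating functions and the $L^2$ triangle inequality.
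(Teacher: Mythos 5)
The paper does not actually prove this statement: it quotes it verbatim as \cite[Theorem 4.1]{Wormald}, which itself rests on the Robinson--Wormald / Janson small subgraph conditioning method. Your proposal is a correct reconstruction of exactly that argument, with the standard ingredients in the standard order: the $L^2$-bounded martingale $W^{(M)}$ with $\E W^{(M)}=1$ and $\E(W^{(M)})^2=\exp(\sum_{3\le m\le M}\mu_m\delta_m^2)$, the tilting identity to compute the cross term, the squeeze $\liminf\E_{\Q_n}f_n^2\ge e^{\delta_M}$ against the hypothesis $\limsup\E_{\Q_n}f_n^2\le e^{\delta}$, the diagonal $L^2$/weak-convergence argument giving $f_n\toD W$ under $\Q_n$, and Le Cam's lemma applied to $W>0$ a.s.\ for one direction and uniform integrability for the other. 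You also correctly supply the implicit hypothesis $\delta_m>-1$ (needed for $W$ to be well-defined and strictly positive), which the paper's phrasing omits but which holds in the application since $\delta_m=\tr(T^m)-1$ and $T$ has a simple top eigenvalue $1$.

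Your flag on the moment-convergence step is the right one to raise and is the genuine technical crux of the cited theorem: convergence of all joint factorial moments of nonnegative integer-valued $X_{m,n}$ to those of independent Poissons does give convergence in distribution, but does not by itself yield convergence of the \emph{exponential} functionals $\E_{\Q_n}(1+\delta_m)^{kX_{m,n}}$ without some additional uniform-tail control. Janson and Wormald handle this by working with conditional expectations $\E_{\Q_n}[f_n\mid X_{3,n},\dots,X_{M,n}]$ evaluated at fixed integer arguments (a bounded-range object), rather than by directly passing to the limit in $\E_{\Q_n}[f_nW_n^{(M)}]$ and $\E_{\Q_n}(W_n^{(M)})^2$; this sidesteps the UI issue you identify. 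Your sketch, as written, asserts the needed UI ``is exactly what the hypothesis buys,'' which is the one place where the argument would need to be tightened if carried out in full --- either by the conditional-expectation route or by a truncation-and-monotone-convergence argument exploiting $X_{m,n}\ge 0$. Apart from that, the proposal matches the cited proof in approach and in all its computations.
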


We will apply Theorem~\ref{thm:conditional-second-mom} with $\P_n$ replaced by
$\hat \P_n = (\P_n \mid \Omega_n)$; i.e., the
block model conditioned on having almost the expected label frequencies. We will
take $X_{m,n}$ to be the number of $m$-cycles in the graph $G$ (which is drawn
either from $\hat \P_n$ or from $\Q_n$).
In order to apply Theorem~\ref{thm:conditional-second-mom}, we need to know
that the number of $m$-cycles has a limiting Poisson distribution (and we need
to know the parameters).  For $\Q_n$, this is classical; for $\P_n$ it was
proved by \cite{BoJaRi:07} (and it follows for $\hat \P_n$ since $\hat \P_n$
is obtained from $\P_n$ by conditioning on an event that holds with
probability converging to 1).

\begin{proposition}\label{prop:cycle-count}
Let $X_m$ be the number of $m$-cycles in $G$. Then
\begin{align*}
X_m &\toD \Pois\left(\frac 1{2m} d^m\right) \text{ under $\Q_n$, and}\\
X_m &\toD \Pois\left(\frac 1{2m} d^m \tr(T^m)\right) \text{ under $\P_n$.}
\end{align*}
Moreover, for any fixed $m^*$ the variables $\{X_3, \dots, X_{m^*}\}$ are asymptotically
independent under both $\P_n$ and $\Q_n$, in the sense of Theorem~\ref{thm:conditional-second-mom}.
\end{proposition}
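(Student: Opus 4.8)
The plan is to prove Proposition~\ref{prop:cycle-count} by the method of joint factorial moments, which yields the marginal Poisson limits and the asymptotic independence required in Theorem~\ref{thm:conditional-second-mom} simultaneously. Writing $(x)_r = x(x-1)\cdots(x-r+1)$, it suffices to show that for every $m^* \ge 3$ and all nonnegative integers $r_3, \dots, r_{m^*}$,
\[
\E\Big[\prod_{m=3}^{m^*}(X_m)_{r_m}\Big] \;\longrightarrow\; \prod_{m=3}^{m^*}\mu_m^{\,r_m},
\]
where $\mu_m = \tfrac{1}{2m}d^m$ under $\Q_n$ and $\mu_m = \tfrac{1}{2m}d^m\tr(T^m)$ under $\P_n$; this is the standard criterion for joint convergence to independent Poissons, since the $r$-th factorial moment of $\Pois(\mu)$ is $\mu^r$ and that product law is determined by its moments. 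For $\Q_n = \calG(n,d/n)$ the above is the classical Erd\H{o}s--R\'enyi computation; for $\P_n$ it is due to~\cite{BoJaRi:07}, and I indicate the short modification below. Passing to $\hat\P_n = (\P_n \mid \Omega_n)$ is then routine: $\P_n(\Omega_n)\to 1$, $(X_m)_{r_m} \le n^{m r_m}$, and the same argument (run with larger exponents) shows that $\E_{\P_n}[\prod_m(X_m)_{r_m}^2]$ stays bounded, so Cauchy--Schwarz gives $\E_{\hat\P_n}[\prod_m(X_m)_{r_m}] = \E_{\P_n}[\prod_m(X_m)_{r_m}] + o(1)$ and all limits are inherited.

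For the moment computation, note that $\E[\prod_m(X_m)_{r_m}]$ is the expected number of ordered tuples of distinct cycles of $G$ containing exactly $r_m$ cycles of length $m$ for each $m \le m^*$; equivalently it equals $\sum \Pr[\text{all cycles in the tuple lie in }G]$, the sum ranging over tuples of cyclic sequences of distinct vertices. Split this sum according to whether the cycles in the tuple are pairwise vertex-disjoint. In the disjoint case the number of tuples is $(1+o(1))\prod_m (n^m/2m)^{r_m}$, and the events $\{C \subseteq G\}$ factor over the cycles $C$: under $\Q_n$ each $m$-cycle appears with probability $(d/n)^m$ independently; under $\P_n$ one first conditions on $\sigma$, making the edge indicators independent, so $\Pr[C \subseteq G \mid \sigma] = \prod_{i=1}^m M_{\sigma_{v_i}\sigma_{v_{i+1}}}/n$ for $C = (v_1,\dots,v_m)$, and averaging over the i.i.d.\ labels and using $\pi_a M_{ab} = d\,T_{ab}$ (see~\eqref{eq:T-def}),
\[
\E_\sigma \prod_{i=1}^m M_{\sigma_{v_i}\sigma_{v_{i+1}}}
= \sum_{a_1,\dots,a_m}\prod_{i=1}^m \pi_{a_i}M_{a_i a_{i+1}}
= d^m \sum_{a_1,\dots,a_m}\prod_{i=1}^m T_{a_i a_{i+1}}
= d^m\tr(T^m),
\]
indices read cyclically; for vertex-disjoint cycles these per-cycle probabilities multiply because they depend on disjoint blocks of the labels. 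Hence the disjoint part of the sum converges to $\prod_m(\tfrac{1}{2m}d^m\tr(T^m))^{r_m} = \prod_m \mu_m^{r_m}$ (with $\tr(T^m)\equiv 1$ in the $\Q_n$ case), which already yields $\E X_m \to \mu_m$ and the correct product for higher moments.

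It remains to show that tuples in which some two cycles share a vertex contribute $o(1)$, and this combinatorial estimate is the only real work. Let $H$ be the union of the cycles of such a tuple, and group the cycles into the connected components of $H$. A component consisting of a single cycle has equally many vertices and edges, while a component containing two or more of the tuple's cycles is connected and contains two distinct cycles; these have distinct edge sets, hence are two independent elements of the $\mathrm{GF}(2)$ cycle space, which forces the cyclomatic number of the component to be at least $2$, i.e.\ it has at least one more edge than vertices. So any tuple that is not pairwise vertex-disjoint yields an $H$ with, say, $V$ vertices and $E \ge V+1$ edges. There are $O(n^V)$ embeddings of a fixed such $H$, and $\Pr[H \subseteq G] = O(n^{-E})$ since every edge is present with probability $O(1/n)$ (exactly $d/n$ under $\Q_n$; at most $(\max_{ij}M_{ij})/n$ under $\P_n$ after conditioning on $\sigma$ and bounding the worst case). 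Hence each shape contributes $O(n^{V-E}) = O(1/n)$, and there are only finitely many shapes for fixed $m^*$ and $(r_m)$, so the non-disjoint part is $O(1/n)$. Combined with the previous paragraph, this proves the joint factorial-moment limits, and hence the asserted independent Poisson limits under $\Q_n$, $\P_n$, and $\hat\P_n$. The main obstacle is precisely this overlap estimate; conceptually, though, it is the familiar fact that non-tree-like small subgraphs are too rare in $\calG(n,d/n)$ --- and, by the same $O(1/n)$ edge bound, in the block model --- to affect the limit.
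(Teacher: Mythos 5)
Your proof is correct and is, in effect, the standard joint-factorial-moments argument for Poisson convergence of small-cycle counts. The paper itself does not supply a proof of this proposition: the surrounding text simply remarks that the $\Q_n$ case is classical and cites Bollob\'as--Janson--Riordan (the paper's reference \texttt{BoJaRi:07}) for the $\P_n$ case, with a one-line remark about passing to $\hat\P_n$. Your argument substitutes a self-contained proof, and the block-model-specific content is exactly where it should be: averaging $\Pr[C\subseteq G\mid\sigma]=\prod_i M_{\sigma_{v_i}\sigma_{v_{i+1}}}/n$ over i.i.d.\ labels and using $\pi_a M_{ab}=dT_{ab}$ to get $d^m\tr(T^m)$, plus the observation that for vertex-disjoint cycles the label blocks are independent so the per-cycle contributions factor. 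The overlap estimate is also sound: two distinct cycles contained in a single connected component of $H$ are two distinct nonzero elements of that component's $\mathrm{GF}(2)$ cycle space, hence independent (over $\mathrm{GF}(2)$ any two distinct nonzero vectors are independent), so the cyclomatic number is at least $2$ and $E\ge V+1$, giving the $O(1/n)$ bound per shape with only finitely many shapes.

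Two small remarks. First, the proposition as stated concerns $\P_n$ and $\Q_n$ only, so the $\hat\P_n$ step is not strictly required to prove it; but since Theorem~\ref{thm:conditional-second-mom} is ultimately applied with $\hat\P_n$ in place of $\P_n$, and since that theorem asks for \emph{moment} convergence rather than mere distributional convergence, the Cauchy--Schwarz argument you give (bounded second factorial moments plus $\P_n(\Omega_n^c)\to 0$) is exactly the right way to justify the paper's parenthetical. Second, you implicitly use moment-determinacy of products of Poissons to upgrade factorial-moment convergence to joint convergence; this is standard and fine to leave implicit.
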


Hence, we may apply Theorem~\ref{thm:conditional-second-mom} with $\mu_m = \frac{1}{2m} d^m$ and
$\delta_m = \tr(T^m) - 1$. Recalling that $1 = \lambda_1 \ge \cdots \ge \lambda_q$ are the eigenvalues of $T$, we have
$\delta_m = \sum_{i \ge 2} \lambda_i^m$. Hence,
\begin{align*}
 \sum_{m=3}^\infty \mu_m \delta_m^2
 &= \frac 12 \sum_{m=3}^\infty \frac {d^m}{m} \sum_{i,j=2}^q \lambda_i^m \lambda_j^m \\
 &= \frac 12 \sum_{i,j=2}^q \sum_{m=3}^\infty \frac {(d\lambda_i \lambda_j)^m}{m} \\
 &= \sum_{i,j=2}^q \log \psi(d\lambda_i \lambda_j),
\end{align*}
where $\psi(x) = (1-x)^{-1/2} e^{-x/2-x^2/4}$. In particular, condition~\eqref{eq:second-moment-cond}
follows immediately from Proposition~\ref{prop:second-moment}, which in turn proves
that $\hat \P_n$ and $\Q_n$ are contiguous. Since
$\P_n(\Omega_n) \to 1$, $\P_n$ and $\hat \P_n$ are 
contiguous also. This proves the first statement of Theorem~\ref{thm:non-distinguish}:
if $Q(\pi, (M - d \J)/\sqrt{2d}) < 1$ then $\P_n$ and $\Q_n$ are contiguous.


%


\subsection{Non-detectability}
Finally, in this section we prove that if $Q(\pi, A/\sqrt{2d}) < 1$ (where $A = M - d\J$) then
$\P_n$ is non-detectable; this will complete the proof of Theorem~\ref{thm:non-distinguish}.
The following proposition is the main technical result we need.
It shows that if $Q(\pi, A/\sqrt{2d}) < 1$ then for any two fixed configurations on a finite set of nodes,
the total variation distance between the distribution on graphs conditioned on these two configurations respectively goes to zero.
\begin{proposition}\label{prop:TVdistance}
Suppose $Q\left(\pi,A/\sqrt{2d}\right) < 1$. Then, for any fixed $r > 0$, and for any two configurations $\left(a_1,a_2,\cdots, a_r\right)$ and
$\left(b_1,b_2,\cdots, b_r\right)$, we have:
\begin{align*}
  TV\left(\prob{G \middle\vert \sigma_u=a_u \mbox{ for } u\in[r]},
\prob{G \middle\vert \sigma_u=b_u \mbox{ for } u\in[r]}\right) = o(1),
\end{align*}
where $TV(\mathbb{P}_1,\mathbb{P}_2)$ denotes the total variation distance between the two distributions $\mathbb{P}_1$ and $\mathbb{P}_2$.
\end{proposition}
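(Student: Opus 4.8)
\subsection*{Proof proposal}

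The plan is to bound the total variation distance by a second moment, re-running the machinery of Section~\ref{sec:secondmoment} with the summations over $\sigma,\tau$ restricted by the prescribed boundary labels. Write $\P_n^a$ for $\P_n$ conditioned on $\{\sigma_u = a_u : u \in [r]\}$ and $\P_n^b$ for the analogous conditioning with $b$; both remain concentrated on $\Omega_n$ (forcing $r = O(1)$ labels perturbs each class size by $O(1) \ll a_n$), so setting $Y_n^a = \mathbbm{1}_{\Omega_n}\, \d\P_n^a/\d\Q_n$ and likewise $Y_n^b$, the contribution of $\Omega_n^c$ is $o(1)$ and Cauchy--Schwarz gives
\[
  4\,TV(\P_n^a,\P_n^b)^2 \le \E_{\Q_n}\bigl(Y_n^a - Y_n^b\bigr)^2 + o(1)
  = \E_{\Q_n}(Y_n^a)^2 - 2\,\E_{\Q_n} Y_n^a Y_n^b + \E_{\Q_n}(Y_n^b)^2 + o(1).
\]
It therefore suffices to show that each of these three (cross-)moments converges to the \emph{same} finite limit.

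To evaluate them I would expand $Y_n^a = \sum_{\sigma \in \Omega_n,\, \sigma_{[r]} = a} \P_n^a(\sigma) \prod_{u,v} W_{uv}(G,\sigma)$ exactly as in the proof of Lemma~\ref{lem:second-moment-simpl} and take $\Q_n$-expectations, so that e.g.\ $\E_{\Q_n} Y_n^a Y_n^b$ becomes $\E_{\sigma,\tau}\bigl[\mathbbm{1}_{\Omega_n}\prod_{u,v}\E_{\Q_n}[W_{uv}(G,\sigma)W_{uv}(G,\tau)]\bigr]$ with $\sigma \sim \P_n^a$ and $\tau \sim \P_n^b$ independent (the normalising constants $\prod_{u\le r}\pi_{a_u}$ are already built into the fact that $\P_n^a$ is a probability measure, so no stray factors survive). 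The product over pairs $\{u,v\}$ with both $u,v \in S := [n]\setminus[r]$ is handled verbatim by Lemma~\ref{lem:Nijkl-Xijkl} (which is stated for exactly such an $S$, with $|S| = n - o(n)$), giving $\tfrac1{2d}\sum_{ijk\ell} X_{ij}X_{k\ell}A_{ik}A_{j\ell} + \nu_1 + \nu_2 + \xi_n$, where now $X_{ij}$ is the centred, renormalised multinomial count over the $n-r$ i.i.d.\ vertices of $S$ --- in particular \emph{independent} of the forced labels, and still satisfying the central limit theorem with the covariance of Lemma~\ref{lem:gaussian-quadratic-limit}. What is left is to control the contribution of the $O(rn)$ pairs meeting $[r]$.

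The key point --- and the step I expect to do the real work --- is that these boundary pairs contribute only an extra $\xi_n$-type error, \emph{because} $A\pi = 0$ (equivalently, because every vertex has the same expected degree). Indeed, for fixed $u \in [r]$ the terms indexed by pairs $\{u,v\}$ sum to $\tfrac{1}{dn}\sum_{k\ell} N_{k\ell}\,A_{\sigma_u k}A_{\tau_u \ell} + O(n^{-1})$, and substituting $N_{k\ell} = n\pi_k\pi_\ell + \sqrt n\, X_{k\ell} + O(1)$ the deterministic part equals $\tfrac1d\bigl(\sum_k \pi_k A_{\sigma_u k}\bigr)\bigl(\sum_\ell \pi_\ell A_{\tau_u \ell}\bigr) = 0$, leaving only $O(n^{-1/2})\sum_{k\ell}|X_{k\ell}|$; summing over the $r = O(1)$ boundary vertices, and noting the $O(1)$ pairs inside $[r]$ contribute $O(n^{-1})$, the total boundary contribution is of exactly the form already absorbed into $\xi_n$, so Lemma~\ref{lem:xi} still applies. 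Consequently $\E_{\Q_n}(Y_n^a)^2$, $\E_{\Q_n} Y_n^a Y_n^b$ and $\E_{\Q_n}(Y_n^b)^2$ are each $(1+o(1))\,\E_{\sigma|_S,\tau|_S}\bigl[\mathbbm{1}_{\Omega_n}\exp\bigl(\tfrac1{2d}\sum X_{ij}X_{k\ell}A_{ik}A_{j\ell} + \nu_1+\nu_2+\xi_n'\bigr)\bigr]$ for the \emph{same} multinomial vector $(X_{ij})$ over $S$; the uniform integrability of Lemma~\ref{lem:UI-final} holds since it uses only $Q(\pi,A/\sqrt{2d}) < 1$, so each converges to $\exp(\nu_1+\nu_2)\prod_{i,j=2}^q (1-d\lambda_i\lambda_j)^{-1/2} = \prod_{i,j=2}^q \psi(d\lambda_i\lambda_j)$, the common limit. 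Hence $\E_{\Q_n}(Y_n^a - Y_n^b)^2 \to 0$, which by the first display proves the proposition. The main obstacle is thus purely the bookkeeping of Section~\ref{sec:secondmoment}: once one observes that $A\pi = 0$ annihilates the leading-order boundary terms, every estimate there carries over essentially unchanged.
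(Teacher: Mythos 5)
Your proposal is correct and follows essentially the same strategy as the paper's own proof: bound TV by Cauchy--Schwarz against $\Q_n$, expand the resulting squared-difference of densities into (cross-)second moments, apply Lemma~\ref{lem:Nijkl-Xijkl} to the bulk pairs in $S = [n]\setminus[r]$, and use $A\pi = 0$ to show the boundary pairs contribute only a lower-order fluctuation in $\tilde X_{ij}$ that vanishes in the limit. The only cosmetic differences are that the paper keeps the boundary factor $\prod_{u\in[r],i,j}\exp(\frac{n s_{a_u b_u ij}}{d}\cdot\frac{\Xtil_{ij}}{\sqrt n})$ explicit and bounds it deterministically (using $|\Xtil_{ij}| \le \sqrt n$ and boundedness of $A$) to conclude uniform integrability, whereas you absorb it into $\xi_n$ and invoke Lemma~\ref{lem:xi}; and the paper shows each of the four cross-terms is $a,b$-independent up to $o(1)$ rather than framing it as $\E(Y_n^a - Y_n^b)^2 \to 0$. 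Both variants are valid and land on the same calculation.
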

\begin{proof}
We will first prove the statement of the proposition with $\P_n$ replaced by
$\hat \P_n = (\P_n \mid \Omega_n)$; i.e., the
block model conditioned on having almost the expected label frequencies.

We start by using the definition of total variation distance:
\begin{align*}
& \quad TV\left(\probcond{G \middle\vert \sigma_u=a_u \mbox{ for } u\in[r]},
\probcond{G \middle\vert \sigma_u=b_u \mbox{ for } u\in[r]}\right) \\
&= \sum_G \abs{\probcond{G \middle\vert \sigma_u=a_u \mbox{ for } u\in[r]} -
\probcond{G \middle\vert \sigma_u=b_u \mbox{ for } u\in[r]}} \\
&= \sum_G \abs{\probcond{G \middle\vert \sigma_u=a_u \mbox{ for } u\in[r]} -
\probcond{G \middle\vert \sigma_u=b_u \mbox{ for } u\in[r]}} \frac{\sqrt{\probER{G}}}{\sqrt{\probER{G}}}\\
&\stackrel{(a)}{\leq} \left(\sum_G \probER{G}\right)^{1/2} \left(\sum_G \frac{\left(\probcond{G \middle\vert \sigma_u=a_u \mbox{ for } u\in[r]} -
\probcond{G \middle\vert \sigma_u=b_u \mbox{ for } u\in[r]}\right)^2}{\probER{G}}\right)^{1/2}\\
&= \left(\sum_G \frac{\left(\sum_{\sigtil} \probcond{\sigtil}\left(\probcond{G \middle\vert a,\sigtil} -
\probcond{G \middle\vert b,\sigtil}\right)\right)^2}{\probER{G}}\right)^{1/2},
\end{align*}
where $(a)$ follows from Cauchy-Schwartz inequality and $\sigtil$ denotes an assignment on $[n]\setminus [r]$.
We can expand the numerator as follows:
\begin{align*}
&\left(\sum_{\sigtil} \probcond{\sigtil}\left(\probcond{G \middle\vert a,\sigtil} -
\probcond{G \middle\vert b,\sigtil}\right)\right)^2 \\
&= \sum_{\sigtil,\tautil} \probcond{\sigtil}\probcond{\tautil} \left(
\probcond{G \middle\vert a,\sigtil}\probcond{G \middle\vert a,\tautil} + \probcond{G \middle\vert b,\sigtil}\probcond{G \middle\vert b,\tautil} \right.\\
&\qquad\qquad\qquad\qquad \left.- \probcond{G \middle\vert a,\sigtil}\probcond{G \middle\vert b,\tautil} - \probcond{G \middle\vert b,\sigtil}\probcond{G \middle\vert a,\tautil}
\right).
\end{align*}
We will now show that the value of
\begin{align*}
\sum_{\sigtil,\tautil} \probcond{\sigtil}\probcond{\tautil} \sum_G \frac{\probcond{G \middle\vert a,\sigtil}\probcond{G \middle\vert b,\tautil}}{\probER{G}},
\end{align*}
is independent of $a$ and $b$ up to an $o(1)$ error term. This will prove our claim.
 Define
 \[
  W_{uv}(G, \sigma)
  \defas \begin{cases}
     \frac{M_{\sigma_u,\sigma_v}}{d} &\text{if $(u, v) \in E(G)$,} \\
     \frac{1 - \frac{M_{\sigma_u,\sigma_v}}{n}}{1 - \frac dn} &\text{if $(u, v) \not \in E(G)$,}
    \end{cases}
 \]
and let $s_{ijk\ell} = (M_{ik} - d)(M_{j\ell} - d)/n = A_{ik} A_{j\ell} / n$,
and $t_{ijk\ell} = \frac {s_{ijk\ell}} d + \frac {s_{ijk\ell}} n - \frac{s_{ijk\ell}^2}{2d^2}$.
We have:
\begin{align}
&\sum_{\sigtil,\tautil} \probcond{\sigtil}\probcond{\tautil} \sum_G \frac{\probcond{G \middle\vert a,\sigtil}\probcond{G \middle\vert b,\tautil}}{\probER{G}}\nonumber\\
&=\sum_{\sigtil,\tautil} \probcond{\sigtil}\probcond{\tautil}\prod_{u,v \in [n]} \E_{\Q_n} [W_{uv}(G, a,\sigtil) W_{uv}(G, b,\tautil)] \nonumber\\
&= \hat\E_{\sigtil,\tautil} \prod_{u,v \in [n]\setminus [r]} \left(1+
t_{\sigtil_u\tautil_u\sigtil_v\tautil_v}+\epsilon_n\right)
\prod_{\stackrel{u\in[r]}{v \in [n]\setminus [r]}} \left(1+
t_{a_u b_u\sigtil_v\tautil_v}+\epsilon_n\right)
\prod_{u,v\in[r]} \left(1+
t_{a_u b_u a_v b_v}+\epsilon_n\right)\nonumber\\
&= \hat\E_{\sigtil,\tautil} \prod_{i,j,k,\ell \in [q]} \left(1+
t_{ijk\ell}+\epsilon_n\right)^{\Ntil_{ijk\ell}} 
\prod_{\stackrel{u\in[r]}{i,j \in [q]}} \left(1+
t_{a_u b_u ij}+\epsilon_n\right)^{\Ntil_{ij}}
\prod_{u,v\in[r]} \left(1+
t_{a_u b_u a_v b_v}+\epsilon_n\right),\label{eqn:main}
\end{align}
where $\Ntil_{ijk\ell} = \left|\{\{u, v\}: \sigtil_u = i, \tautil_u = j, \sigtil_v = k, \tautil_v = \ell\}\right|$,
$\Ntil_{ij} = \left|\{v: \sigtil_v=i, \tautil_v=j\}\right|$, and $\epsilon_n = O(n^{-3})$.
We first note that the last term in \eqref{eqn:main} is essentially constant:
\begin{align*}
&\prod_{u,v\in[r]} \left(1+t_{a_u b_u a_v b_v}+\epsilon_n\right)
= \prod_{u,v\in[r]} \left(1+\bigoh{\frac{1}{n}}\right)
= \left(1+\bigoh{\frac{1}{n}}\right)^{r^2} = 1+\bigoh{\frac{1}{n}}.
\end{align*}

For the second term in \eqref{eqn:main}, since $\tilde N_{ij} < n$, we have
\begin{align}
\prod_{{i,j \in [q]}} \left(1+
t_{a_u b_u ij}+\epsilon_n\right)^{\Ntil_{ij}}
&=\prod_{i,j \in [q]} \left(1+
\frac{s_{a_u b_u ij}}{d}+\bigoh{\frac{1}{n^2}}\right)^{\Ntil_{ij}} \nonumber\\
&= (1+o(1)) \prod_{i,j \in [q]} \exp\left(\frac{s_{a_u b_u ij}}{d} \cdot \Ntil_{ij} \right) 
\end{align}
On the other hand,
\begin{align*}
\prod_{i,j \in [q]} \exp\left(\frac{ns_{a_u b_u ij}}{d} \cdot \pi_i \pi_j \right)
&= \prod_{i,j \in [q]} \exp\left(\frac{\pi_i \pi_j A_{a_u i}A_{b_u j}}{d}\right) \\
&= \exp\left( \frac{\left(\sum_{i\in [q]}\pi_i A_{a_u i}\right)\left(\sum_{j\in [q]}\pi_j A_{b_u j}\right)}{d}\right)
= 1,
\end{align*}
and so we may write the second term of~\eqref{eqn:main} as
\begin{align*}
\prod_{{i,j \in [q]}} \left(1+ t_{a_u b_u ij}+\epsilon_n\right)^{\Ntil_{ij}}
&= (1+o(1))\prod_{i,j \in [q]} \exp\left(\frac{ns_{a_u b_u ij}}{d} \left(\frac{\Ntil_{ij}}{n}-\pi_i \pi_j\right) \right) \\
&= (1+o(1))\prod_{i,j \in [q]} \exp\left(\frac{ns_{a_u b_u ij}}{d} \cdot \frac{\Xtil_{ij}}{\sqrt{n}}\right),
\end{align*}
where $\Xtil_{ij}\defas n^{-1/2}\left(\Ntil_{ij}-n \pi_i\pi_j\right)$.

Going back to~\eqref{eqn:main} and plugging in our estimates on the second and third terms,
\begin{align*}
&\sum_{\sigtil,\tautil} \probcond{\sigtil}\probcond{\tautil} \sum_G \frac{\probcond{G \middle\vert a,\sigtil}\probcond{G \middle\vert b,\tautil}}{\probER{G}} \\
&= (1+o(1)) \hat\E_{\sigtil,\tautil} \prod_{i,j,k,\ell \in [q]} \left(1+
t_{ijk\ell}+\epsilon_n\right)^{\Ntil_{ijk\ell}}
\prod_{\stackrel{u \in [r]}{i,j \in [q]}} \exp\left(\frac{ns_{a_u b_u ij}}{d} \cdot \frac{\Xtil_{ij}}{\sqrt{n}} \right) \\
&= (1+o(1)) \hat\E_{\sigtil,\tautil} \exp\left(
  \sum_{ijk\ell} \Ntil_{ijk\ell} t_{ijk\ell}
\right)
\prod_{\stackrel{u \in [r]}{i,j \in [q]}} \exp\left(\frac{ns_{a_u b_u ij}}{d} \cdot \frac{\Xtil_{ij}}{\sqrt{n}} \right) \\
&=
(1+o(1))\hat\E_{\sigtil,\tautil}
\exp\left(\frac 1{2d} \sum_{ijk\ell} \Xtil_{ij} \Xtil_{k\ell} A_{ik} A_{j\ell} + \nu_1 + \nu_2 + \xitil_n\right)
\prod_{\stackrel{u \in [r]}{i,j \in [q]}} \exp\left(\frac{ns_{a_u b_u ij}}{d} \cdot \frac{\Xtil_{ij}}{\sqrt{n}} \right),
\end{align*}
where the last equality follows from Lemma~\ref{lem:Nijkl-Xijkl}.
Note that $\exp\left(\frac 1{2d} \sum_{ijk\ell} \Xtil_{ij} \Xtil_{k\ell} A_{ik} A_{j\ell} + \xitil_n\right)$ is
independent of $a$ and $b$ and from Lemma~\ref{lem:UI-final}, we also know that it is uniformly integrable.
On the other hand, since $\abs{\Xtil_{ij}} \leq \sqrt{n}$, we see that $\exp\left(\sum_{u \in [r],i,j \in [q]} \frac{ns_{a_u b_u ij}}{d}
\cdot \frac{\Xtil_{ij}}{\sqrt{n}}\right)$
is uniformly bounded; hence, the entire displayed expression above is uniformly integrable.
Since $\Xtil_{ij} \rightarrow \Normal\left(0,\pi_i\pi_j - (\pi_i\pi_j)^2\right)$, the displayed equation above
converges to a finite quantity that is independent of $a$ and $b$.
This proves the statement of the proposition with $\P_n$ replaced by
$\hat \P_n = (\P_n \mid \Omega_n)$.
Noting that
\begin{align*}
TV\left(\prob{G \middle\vert \sigma_u=a_u \mbox{ for } u\in[r]},
\probcond{G \middle\vert \sigma_u=a_u \mbox{ for } u\in[r]}\right) = o(1), \; \forall \; a
\end{align*}
gives us the desired result.
\end{proof}

As an easy consequence of Proposition~\ref{prop:TVdistance}, the posterior distribution of a single label
is essentially unchanged if we know a bounded number of other labels:
\begin{lemma}\label{lem:condTV}
Suppose $Q\left(\pi,A/\sqrt{2d}\right) < 1$. Then, for any set $S$ such that $|S|$ is a constant, $u \notin S$, we have:
\begin{align*}
  \expec{TV\left(\prob{\sigma_u \middle\vert G, \sigma_S}, \pi \right) \middle\vert \sigma_S} = o(1).
\end{align*}
\end{lemma}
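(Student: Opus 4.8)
The plan is to unfold the posterior law of $\sigma_u$ by Bayes' rule and then reduce the claim directly to Proposition~\ref{prop:TVdistance}, applied to configurations on the finite set $S \cup \{u\}$. Write $r = \abs{S}+1$, and recall that the labels $(\sigma_v)_v$ are a priori i.i.d.\ with law $\pi$, so that $\sigma_u$ is independent of $\sigma_S$ and
\[
\prob{\sigma_u = a \mid G, \sigma_S} = \frac{\pi_a \, \prob{G \mid \sigma_u = a, \sigma_S}}{\prob{G \mid \sigma_S}}, \qquad \prob{G \mid \sigma_S} = \sum_{a'} \pi_{a'} \, \prob{G \mid \sigma_u = a', \sigma_S}.
\]
Substituting the second identity into the first and using $\sum_{a'}\pi_{a'} = 1$ to write $\prob{G \mid \sigma_u = a, \sigma_S} - \prob{G \mid \sigma_S}$ as a $\pi$-average of the differences $\prob{G \mid \sigma_u = a, \sigma_S} - \prob{G \mid \sigma_u = a', \sigma_S}$, one gets (using the paper's normalization $TV(\mu,\nu) = \sum_G \abs{\mu(G)-\nu(G)}$)
\begin{align*}
TV\!\left( \prob{\sigma_u \mid G, \sigma_S}, \pi \right)
&= \sum_a \pi_a \left| \frac{\prob{G \mid \sigma_u = a, \sigma_S}}{\prob{G \mid \sigma_S}} - 1 \right| \\
&\le \sum_{a, a'} \pi_a \pi_{a'} \, \frac{\abs{\prob{G \mid \sigma_u = a, \sigma_S} - \prob{G \mid \sigma_u = a', \sigma_S}}}{\prob{G \mid \sigma_S}}.
\end{align*}

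Next I would take the conditional expectation over $G$ given $\sigma_S$. The key point is that $\expec{\,\cdot\, \mid \sigma_S} = \sum_G \prob{G \mid \sigma_S}\,(\cdot)$, so the normalizing denominator cancels exactly:
\begin{align*}
\expec{TV\!\left( \prob{\sigma_u \mid G, \sigma_S}, \pi \right) \mid \sigma_S}
&\le \sum_{a, a'} \pi_a \pi_{a'} \sum_G \abs{\prob{G \mid \sigma_u = a, \sigma_S} - \prob{G \mid \sigma_u = a', \sigma_S}} \\
&= \sum_{a, a'} \pi_a \pi_{a'} \, TV\!\left( \prob{G \mid \sigma_u = a, \sigma_S}, \prob{G \mid \sigma_u = a', \sigma_S} \right).
\end{align*}
Each summand is the total variation distance between the laws of $G$ under two configurations on the $r$-element set $S \cup \{u\}$ (differing only at $u$), and $r$ is a constant; hence Proposition~\ref{prop:TVdistance} gives that each summand is $o(1)$. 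Since there are only $q^2$ pairs $(a,a')$, and only the bounded number $q^r$ of configurations on $S \cup \{u\}$ in total, the $o(1)$ may be taken uniformly, so that summing the bounds yields $\expec{TV(\cdots) \mid \sigma_S} = o(1)$; and since $\sigma_S$ itself ranges over the constantly many elements of $[q]^S$, this is uniform in $\sigma_S$, which is the statement of the lemma.

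I do not expect a real obstacle here: all the substance lives in Proposition~\ref{prop:TVdistance}, and the reduction above is elementary Bayesian bookkeeping. The two points that merit a careful word are (i) that one must keep the posterior in \emph{exact} form through the step of taking $\expec{\,\cdot\,\mid\sigma_S}$, since it is precisely this expectation that cancels the awkward denominator $\prob{G \mid \sigma_S}$ — bounding that ratio prematurely would leave an uncontrolled quantity — and (ii) that the $o(1)$ error in Proposition~\ref{prop:TVdistance} is uniform over the finite family of configuration pairs on $S\cup\{u\}$, so that a finite sum of such errors remains $o(1)$. One may also note in passing that $\prob{G \mid \sigma_S} > 0$ for every $G$ that actually contributes to the sums above (for any $G$ with $\prob{G\mid\sigma_S}=0$, all the terms $\prob{G \mid \sigma_u = a, \sigma_S}$ vanish too), so the posterior and the divisions are well defined.
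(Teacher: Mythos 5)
Your argument follows the same route as the paper's: unfold the posterior by Bayes' rule, note that taking $\expec{\cdot \mid \sigma_S}$ cancels the normalizing denominator $\prob{G\mid\sigma_S}$, and land on a finite $\pi$-weighted sum of total-variation distances between graph laws conditioned on configurations over $S\cup\{u\}$, which Proposition~\ref{prop:TVdistance} controls. The only difference is cosmetic: the paper stops at $\sum_i \pi(i)\,TV\bigl(\prob{G\mid \sigma_u=i,\sigma_S},\prob{G\mid\sigma_S}\bigr)$ and invokes the proposition directly, while you additionally spell out the convexity/triangle step that rewrites $\prob{G\mid\sigma_S}$ as a $\pi$-mixture over $\sigma_u=a'$ so the proposition applies verbatim — a small gain in explicitness, but the same proof.
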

\begin{proof}
\begin{align*}
  \expec{TV\left(\prob{\sigma_u \middle\vert G, \sigma_S}, \pi \right) \middle\vert \sigma_S}
  &= \sum_{\sigma_u} \prob{\sigma_u} \sum_G \abs{\frac{\prob{G \middle\vert \sigma_u, \sigma_S}}{\prob{G \middle\vert \sigma_S}} - 1}
	\prob{G \middle\vert \sigma_S} \\
  &= \sum_{i} \pi(i) TV\left(\prob{G \middle\vert \sigma_u=i, \sigma_S},\prob{G \middle\vert \sigma_S}\right) = o(1),
\end{align*}
where the last step follows from Proposition~\ref{prop:TVdistance}.
\end{proof}

Finally, we will show the non-detectability part of Theorem~\ref{thm:non-distinguish}.
By Markov's inequality, it is enough to
show that $\lim_{n \to \infty} \expec{\olap(\calA(G), \sigma)} = 0$.
We first bound $\expec{\olap(\calA(G), \sigma)}$ as follows:
\begin{align}
  \expec{\olap(\sigma,\calA(G))}
    &= \frac{1}{n}\expec{\max_\rho \sum_{i=1}^q \left(N_{i \rho(i)} (\sigma, \calA(G)) - \frac 1n N_i(\sigma) N_{\rho(i)}(\calA(G))\right)} \nonumber\\
    &\leq  \frac{1}{n}\sum_\rho \expec{\abs{\sum_{i=1}^q \left(N_{i \rho(i)} (\sigma, \calA(G)) - \frac 1n N_i(\sigma) N_{\rho(i)}(\calA(G))\right)}}.
\label{eqn:overlap-bound}
\end{align}
We will now show that each of the terms in the above summation goes to zero. Without loss of generality, let $\rho$ be the
identity map. Fix $i\in [q]$ and
consider the term $\E{\abs{\left(N_{i i} - \frac 1n N_i(\sigma) N_{i}(\calA(G))\right)}}$ (for brevity,
we suppress $\sigma, \calA(G)$ in $N_{i i} (\sigma, \calA(G))$).
Using Jensen's inequality, it is sufficient to bound
\begin{align}
\E{\left(N_{i i} - \frac 1n N_i(\sigma) N_{i}(\calA(G))\right)^2}
&= \expec{N_{ii}^2 - \frac 2n N_{ii}N_i(\sigma) N_{i}(\calA(G)) + \frac{1}{n^2} N_i^2(\sigma) N_{i}^2(\calA(G))}.
\label{eqn:Niibound}
\end{align}
We will now calculate each of the above three terms.
\begin{align}
\E N_{ii}^2 = \E \left(\sum_u \indicator{\sigma_u = i} \indicator{\calA(G)_u = i}\right)^2
&= \sum_{u,v} \E \indicator{\sigma_u = i} \indicator{\calA(G)_u = i} \indicator{\sigma_v = i} \indicator{\calA(G)_v = i} \nonumber\\
& = \sum_{u,v} \E \indicator{\sigma_u = i} \indicator{\calA(G)_u = i} \indicator{\sigma_v = i} \indicator{\calA(G)_v = i} \nonumber\\
& = \sum_{u,v} \expec{ \expec{\indicator{\sigma_u = i} \indicator{\calA(G)_u = i} \indicator{\sigma_v = i} \indicator{\calA(G)_v = i}\middle\vert G}} \nonumber\\
& = \sum_{u,v} \expec{ \expec{\indicator{\sigma_u = i} \indicator{\sigma_v = i} \middle\vert G} \indicator{\calA(G)_u = i} \indicator{\calA(G)_v = i}} \nonumber\\
&= \left(\pi(i)^2 \expec{ \indicator{\calA(G)_u = i} \indicator{\calA(G)_v = i}} + o(1) \right) n^2,
\label{eqn:Niibound1}
\end{align}
where the last step follows from Lemma~\ref{lem:condTV}.
Coming to the second term, we have:
\begin{align}
\E N_{ii} N_i(\sigma) N_{i}(\calA(G))
&= \E \left(\sum_u \indicator{\sigma_u = i} \indicator{\calA(G)_u = i}\right)
	\left(\sum_u \indicator{\sigma_u = i}\right)
	\left(\sum_u \indicator{\calA(G)_u = i}\right) \nonumber\\
&= \sum_{u,v,w} \expec{\expec{ \indicator{\sigma_u = i} \indicator{\calA(G)_u = i}
	\indicator{\sigma_v = i} \indicator{\calA(G)_w = i} \middle\vert G}} \nonumber\\
&= \sum_{u,v,w} \expec{\expec{ \indicator{\sigma_u = i} \indicator{\sigma_v = i} \middle\vert G}
	\indicator{\calA(G)_u = i} \indicator{\calA(G)_w = i}} \nonumber\\
&= \left(\pi(i)^2 \expec{ \indicator{\calA(G)_u = i} \indicator{\calA(G)_v = i}} + o(1) \right) n^3,
\label{eqn:Niibound2}
\end{align}
where the last step again follows from Lemma~\ref{lem:condTV}.
A similar argument shows that
\begin{align}
\E N_i^2(\sigma) N_{i}^2(\calA(G)) = \left(\pi(i)^2 \expec{ \indicator{\calA(G)_u = i} \indicator{\calA(G)_v = i}} + o(1) \right) n^4.
\label{eqn:Niibound3}
\end{align}
Plugging~\eqref{eqn:Niibound1},~\eqref{eqn:Niibound2} and~\eqref{eqn:Niibound3} in~\eqref{eqn:Niibound} shows that
\begin{align*}
\E{\left(N_{i i} - \frac 1n N_i(\sigma) N_{i}(\calA(G))\right)^2} = o(n^2). 
\end{align*}
This finishes the proof.


\bibliographystyle{alpha}
\bibliography{block-model,all,mark,more,zp}
\newpage
\appendix
\section{UI and multinomials}\label{sec:UI-multinomials}

Here, we restate and prove Proposition~\ref{prop:ui}.
Recall that $\Delta_q$ denotes the set $\{(\alpha_1, \dots, \alpha_q) : \alpha_i \ge 0 \text{ and }\sum_i \alpha_i = 1\}$, and
that $\Delta_{q^2}(\pi)$ denotes the set of $(\alpha_{11} \dots, \alpha_{qq})$ such that
\begin{align*}
  \alpha_{ij} & \ge 0 \text{ for all $i, j$,} \\
  \sum_{i=1}^q \alpha_{ij} &= \pi_j \text{ for all $j$, and} \\
  \sum_{j=1}^q \alpha_{ij} &= \pi_i \text{ for all $i$.}
\end{align*}

In what follows, we fix an $q^2 \times q^2$ matrix $A$ and some $\pi \in \Delta_q$.
We define $p \in \Delta_{q^2}(\pi)$ by $p_{ij} = \pi_i \pi_j$ (or alternatively, $p = \pi^{\otimes 2}$),
and we take $N \sim \Multinom(n, p)$ and $X = (N - np)/\sqrt{n}$. Finally, fix a sequence $a_n$
such that $\sqrt n \ll a_n \ll n$ and
define $\Omega_n$ to be the event that
\begin{align}
 \max_j \left|\sum_i N_{ij} - n \pi_j \right| &\le a_n \label{eq:N_ij-1} \\
 \max_i \left|\sum_j N_{ij} - n \pi_i \right| &\le a_n. \label{eq:N_ij-2}
\end{align}
Note that the condition $\sqrt n \ll a_n$
ensures that the probability of $\Omega_n$ converges to 1.

\begin{proposition}\label{prop:UI-multinomial}
Define 
\[\lambda = \sup_{\alpha \in \Delta_{q^2}(\pi)} \frac{(\alpha - p)^T A (\alpha - p)}{D(\alpha, p)}.\]
If $\lambda < 1$ then
\[
 \E [\mathbbm{1}_{\Omega_n} \exp(X^T A X)] \to \E \exp (Z^T A Z) < \infty,
\]
as $n \to \infty$, where $Z \sim \Normal(0, \diag(p) - p p^T)$.
On the other hand, if $\lambda > 1$ then
\[
 \E [\mathbbm{1}_{\Omega_n}\exp(X^T A X)] \to \infty
\]
as $n \to \infty$.
\end{proposition}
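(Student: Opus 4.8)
The plan is to deduce the statement from the multinomial central limit theorem together with a uniform integrability estimate obtained via Stirling's formula and Laplace's method. By the classical CLT for multinomials, $X \toD Z$ (unconditionally), and since $\Pr(\Omega_n) \to 1$ we have $\mathbbm{1}_{\Omega_n}\exp(X^T A X) \toD \exp(Z^T A Z)$; so it suffices to show that when $\lambda < 1$ this sequence is uniformly integrable and that $\E\exp(Z^T A Z) < \infty$, while when $\lambda > 1$ we produce enough mass near a maximizer to force the expectation to blow up.

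For the case $\lambda < 1$, I would first record two consequences of Stirling's approximation of the multinomial coefficient: a uniform local limit bound $\Pr(N = k) \le C_\eta\, n^{-(q^2-1)/2} e^{-n D(k/n, p)}$ valid whenever every coordinate of $k/n$ exceeds $\eta$, and the crude bound $\Pr(N = k) \le C\, n^{1/2} e^{-n D(k/n, p)}$ valid for all $k$. Since $X^T A X = n\,(\alpha - p)^T A(\alpha - p)$ for $\alpha = k/n$, the summand $\Pr(N=k)\exp(X^T A X)$ is controlled by $\exp(-n g(\alpha))$ up to the prefactor, where $g(\alpha) := D(\alpha,p) - (\alpha - p)^T A(\alpha - p)$. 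The heart of the argument is the inequality $g(\alpha) \ge (1 - \lambda') D(\alpha, p) \ge c\|\alpha - p\|^2$ for some $\lambda' < 1$, $c > 0$, and all $\alpha$ within a fixed distance $\delta$ of $\Delta_{q^2}(\pi)$: this follows from the definition of $\lambda$ (which bounds the ratio by $\lambda < 1$ on $\Delta_{q^2}(\pi)$, upgraded to $\lambda'$ by continuity and compactness of the Rayleigh-type quotient on a neighbourhood, the point $\alpha = p$ being handled by the Taylor expansion $D(\alpha,p) = \tfrac12(\alpha - p)^T \diag(1/p)(\alpha - p) + O(\|\alpha-p\|^3)$), together with Pinsker's inequality. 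On $\Omega_n$ one has $\dist(k/n, \Delta_{q^2}(\pi)) \le C a_n/n = o(1)$, so this applies to every contributing $k$. Splitting $\E[\mathbbm{1}_{\Omega_n}\exp(X^T A X)]$ into a near-boundary part (where $D$ is bounded below by a positive constant, making that part $e^{-\Omega(n)}\cdot\mathrm{poly}(n) \to 0$) and a bulk part (bounded by $C n^{-(q^2-1)/2}\sum_{k \in \Omega_n} e^{-c\|k - np\|^2/n}$), a lattice-point count finishes the bound: $\sum_{ij}N_{ij} = n$ freezes the all-ones direction, $\Omega_n$ confines the remaining $2q - 2$ marginal directions to size $O(a_n)$ where (because $a_n \gg \sqrt n$) the Gaussian sum saturates at $n^{q-1}$, and the $(q-1)^2$ doubly-centred directions contribute $n^{(q-1)^2/2}$, giving total exponent $-(q^2-1)/2 + (q-1) + (q-1)^2/2 = 0$, i.e.\ an $O(1)$ bound. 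Repeating the whole computation with $A$ replaced by $(1+\epsilon)A$ (legitimate since $(1+\epsilon)\lambda < 1$ for small $\epsilon$) bounds a $(1+\epsilon)$-th moment, which yields uniform integrability; and $\E\exp(Z^T A Z) < \infty$ then follows, either directly from $\lambda < 1$ (the relevant eigenvalues of the Gaussian quadratic form drop below $1/2$, using that $A$ annihilates the marginal directions $\pi\otimes\R^q + \R^q\otimes\pi$ as it does in the application) or a posteriori by Fatou from the uniform bound.

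For the converse $\lambda > 1$, pick $\alpha^\star \in \Delta_{q^2}(\pi)$ with $\delta := (\alpha^\star - p)^T A(\alpha^\star - p) - D(\alpha^\star, p) > 0$; since the ratio is finite, $D(\alpha^\star,p) < \infty$, so $\alpha^\star$ lies in the interior of the simplex (after a harmless perturbation if needed). Restricting the expectation to the $\gtrsim (\epsilon n)^{q^2-1}$ lattice points $k$ with $k/n \in B(\alpha^\star, \epsilon)$ — all of which lie in $\Omega_n$ because $\alpha^\star$ has exact marginals $\pi$ and $a_n \gg \sqrt n$ — and using the local limit lower bound $\Pr(N=k) \gtrsim n^{-(q^2-1)/2} e^{-n D(k/n, p)}$ together with continuity of $D$ and of the quadratic form on that ball, the contribution is at least $\epsilon^{q^2-1} n^{(q^2-1)/2} e^{n(\delta - C\epsilon)}$; choosing $\epsilon < \delta/(2C)$ sends this to infinity.

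I expect the main obstacle to be getting the lattice-point count and its interplay with $\Omega_n$ exactly right — in particular verifying that the polynomial prefactors cancel precisely and that the slack $a_n \gg \sqrt n$ in the marginal directions is harmless (it saturates the Gaussian sum rather than the crude cutoff) — and, secondarily, confirming finiteness of $\E\exp(Z^T A Z)$, which genuinely uses more than $\lambda < 1$ in isolation, namely the annihilation of the marginal directions by $A$ that holds in the intended application.
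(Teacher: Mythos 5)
Your proposal follows essentially the same strategy as the paper's proof: Stirling gives $\Pr(N = n\alpha) \approx \exp(-nD(\alpha,p))$ up to polynomial factors, a strengthened inequality of the form $D(\alpha,p) \ge (1+\epsilon)(\alpha-p)^T A(\alpha-p) + \epsilon|\alpha-p|^2$ (the paper's version) or equivalently $g(\alpha) \ge (1-\lambda')D \ge c|\alpha-p|^2$ (yours) is used to dominate the summand by a Gaussian in $|\alpha-p|$, a lattice count finishes the first moment bound, and a $(1+\epsilon)$-moment argument upgrades to uniform integrability; the divergence for $\lambda > 1$ is obtained by isolating lattice points near a maximizer. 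You keep better track of the $n^{-(q^2-1)/2}$ prefactor and do an explicit dimension count where the paper instead cites its Lemma~\ref{lem:UI-multinomial}, which is in the same spirit.

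Two remarks. First, you correctly flag a genuine subtlety that the paper passes over almost silently: the hypothesis $\lambda < 1$ only controls the quadratic form on the doubly-centred subspace $\{\alpha - p : \alpha \in \Delta_{q^2}(\pi)\}$, whereas on $\Omega_n$ the marginal components of $X$ are of order $a_n/\sqrt{n} \to \infty$. Without the property that $A$ annihilates the marginal directions (which holds in the application, since $\sum_i \pi_i A_{ij} = \sum_j \pi_j A_{ij} = 0$), neither $\E\exp(Z^T A Z) < \infty$ nor the extension of the key inequality to a neighbourhood of $\Delta_{q^2}(\pi)$ would follow from $\lambda < 1$ alone; the paper's claim that \eqref{eq:binom-ui-2} ``also holds for all $\alpha \in \Omega_n$ (with a change in the constant $C$)'' is asserted without detail. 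Second, your argument for the divergent case has a small bug: the lattice points $k$ with $k/n \in B(\alpha^\star, \epsilon)$ for a fixed $\epsilon$ do \emph{not} all lie in $\Omega_n$, because their marginals may be off by $\Theta(\epsilon n) \gg a_n$. This is easily repaired by shrinking the ball in the marginal directions to size $O(a_n)$, or more simply — as the paper does — by exhibiting a single lattice point $r^*$ with $r^*/n$ within $O(1/n)$ of $\alpha^\star$ (round $n\alpha^\star$ and adjust one entry), which automatically lies in $\Omega_n$ since $a_n \to \infty$.
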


\begin{lemma}\label{lem:UI-multinomial}
 For any $\epsilon > 0$, any $q = 2, 3, \dots$, and any $p \in \Delta_q$, there is a constant $C < \infty$
 such that for any $n$,
 \[
  n^{-q/2} \sum_{r_1 + \cdots + r_q = n} \exp\left(-n \epsilon\left|\frac rn - p\right|^2\right) \le C.
 \]
\end{lemma}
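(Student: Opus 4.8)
The plan is to prove the bound by the crude device of discarding the simplex constraint $r_1 + \cdots + r_q = n$ altogether (together with the nonnegativity of the $r_i$) and then comparing the resulting unconstrained lattice sum to a product of one-dimensional Gaussian integrals. This is affordable precisely because the prefactor is $n^{-q/2}$ rather than the $n^{-(q-1)/2}$ that a genuine Riemann-sum approximation on the $(q-1)$-dimensional simplex would call for: the extra factor of $\sqrt n$ is exactly the slack we spend on dropping the constraint. We may assume $n \ge 1$.

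First I would rewrite the summand. Since $|r/n - p|^2 = \sum_{i=1}^q (r_i/n - p_i)^2$, we have $n\epsilon\,|r/n - p|^2 = \frac{\epsilon}{n}\sum_{i=1}^q (r_i - np_i)^2$, so the summand factors over the coordinates. Enlarging the index set to all of $\Z^q$,
\[
\sum_{\substack{r_1 + \cdots + r_q = n \\ r_i \ge 0}} \exp\bigl(-n\epsilon\,|r/n - p|^2\bigr)
\;\le\; \sum_{r \in \Z^q} \prod_{i=1}^q \exp\Bigl(-\tfrac{\epsilon}{n}(r_i - np_i)^2\Bigr)
\;=\; \prod_{i=1}^q \Bigl(\sum_{k \in \Z} \exp\bigl(-\tfrac{\epsilon}{n}(k - np_i)^2\bigr)\Bigr).
\]
Next I would bound the one-dimensional theta sum $S(a) := \sum_{k\in\Z} \exp\bigl(-\tfrac{\epsilon}{n}(k-a)^2\bigr)$ uniformly in the real shift $a$. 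Since $t \mapsto \exp(-\tfrac{\epsilon}{n}(t-a)^2)$ is unimodal with maximum $1$, the elementary estimate $\sum_{k\in\Z} f(k) \le 2\max f + \int_\R f$ valid for every unimodal $f \ge 0$ gives
\[
S(a) \;\le\; 2 + \int_\R \exp\bigl(-\tfrac{\epsilon}{n}t^2\bigr)\,dt \;=\; 2 + \sqrt{\pi n/\epsilon}.
\]
For $n \ge 1$ this is at most $(2 + \sqrt{\pi/\epsilon})\sqrt n$, so the product above is at most $\bigl((2 + \sqrt{\pi/\epsilon})\sqrt n\bigr)^q = (2+\sqrt{\pi/\epsilon})^q\, n^{q/2}$. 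Dividing by $n^{q/2}$ yields the claim with $C = (2 + \sqrt{\pi/\epsilon})^q$, which depends only on $\epsilon$ and $q$.

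There is essentially no real obstacle here: the only step requiring a short justification is the unimodal sum-versus-integral comparison used to bound $S(a)$ --- split $\Z$ at the integer just below the peak, note that on each side the summand is monotone so each term is dominated by the integral of $f$ over an adjacent unit interval, and bound the (at most) two integers straddling the peak by $\max f$ each. The one point worth stating explicitly is conceptual rather than technical: the apparently wasteful $n^{-q/2}$ normalization is exactly what licenses dropping the simplex constraint --- in fact the left-hand side tends to $0$ --- but a uniform constant is all that is needed downstream, e.g.\ in the proof of Proposition~\ref{prop:UI-multinomial}.
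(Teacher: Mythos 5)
Your proof is correct, and the high-level reduction is the same as the paper's: drop the simplex constraint, factor the summand over coordinates, and reduce to a one-dimensional estimate of the form $n^{-1/2}\sum_{r}\exp(-\tfrac{\epsilon}{n}(r-a)^2)\le C$. Where you diverge is in how the one-dimensional bound is proved. The paper stratifies the integers into $\lceil\sqrt n\rceil$ arithmetic progressions of stride $\lceil\sqrt n\rceil$; a short calculation (their display~(\ref{eq:binom-ui-1})) shows that successive terms along each progression drop by a factor of at least $e^{-\epsilon}$, so each progression is dominated by a geometric series with ratio $e^{-\epsilon}$, and the $\lceil\sqrt n\rceil$ progressions contribute the missing factor of $\sqrt n$. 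You instead invoke the standard unimodal sum-versus-integral comparison $\sum_{k\in\Z}f(k)\le 2\max f+\int_\R f$ to get $S(a)\le 2+\sqrt{\pi n/\epsilon}$ directly, which yields an explicit constant $C=(2+\sqrt{\pi/\epsilon})^q$. Both arguments are elementary; yours avoids the stride bookkeeping and produces a clean closed-form constant, at the modest cost of invoking the Gaussian integral. One small difference worth noting: you enlarge the index set to all of $\Z^q$ rather than to the box $\{1,\dots,n\}^q$ as in the paper; this is in fact slightly safer, since the paper's box tacitly omits terms with some $r_i=0$ that could appear in the constrained sum, a harmless slip but one your version does not incur.
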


\begin{proof}
 We have
 \begin{align*}
  n^{-q/2} \sum_{r_1 + \cdots + r_q = n} \exp\left(-n \epsilon\left|\frac rn - p\right|^2\right)
  &\le n^{-q/2} \sum_{r_1, \dots, r_q =1}^n \exp\left(-n \epsilon\left|\frac rn - p\right|^2\right) \\
  &= \prod_{i=1}^q \left[ n^{-1/2} \sum_{r=1}^n \exp\left(-n \epsilon\left(\frac rn - p_i\right)^2\right) \right].
 \end{align*}
 The problem has now reduced to the case $q=1$; i.e., we need to show that
 \[
  n^{-1/2} \sum_{r=1}^n \exp(-n\epsilon (r/n - p)^2) < C(p, \epsilon).
 \]
 We do this by dividing the sum above into $\ell = \lceil \sqrt n\rceil$ different sums. Note that if $\frac rn \ge p$ then
 \begin{equation}\label{eq:binom-ui-1}
  \left(\frac{r + \ell}{n} - p\right)^2 = \left(\frac rn - p\right)^2 + \frac{\ell^2}{n^2} + \frac{2\ell}{n}\left(\frac rn - p\right)
  \ge \left(\frac rn - p\right)^2 + \frac 1n.
 \end{equation}
 Hence, $r \ge n p$ implies
 \[
  \exp\left(-n\epsilon\left( \frac {r + \ell}{n} - p\right)^2 \right)
  \le e^{-\epsilon} \exp\left(-n\epsilon\left( \frac {r}{n} - p\right)^2 \right).
 \]
 Stratifying the original sum into strides of length $\ell$,
 \begin{align*}
  n^{-1/2} \sum_{r = \lceil pn \rceil}^n \exp(-n\epsilon (r/n - p)^2)
  &\le n^{-1/2} \sum_{r=\lceil pn \rceil}^{\lceil pn \rceil + \ell-1} \sum_{m=0}^\infty \exp(-n\epsilon ((r + m\ell)/n - p)^2).
 \end{align*}
 Now,~\eqref{eq:binom-ui-1} implies that the inner sum may be bounded by a geometric series with initial value less than 1, and ratio $e^{-\epsilon}$. Hence,
 \[
  n^{-1/2} \sum_{r = \lceil pn \rceil}^n \exp(-n\epsilon (r/n - p)^2) \le n^{-1/2} \ell \frac{1}{1-e^{-\epsilon}},
 \]
 which is bounded. A similar argument for the case $r \le pn$ completes the proof.
\end{proof}

\begin{proof}[Proof of Proposition~\ref{prop:UI-multinomial}]
 First, recall that for any $\alpha = (\alpha_{11}, \dots, \alpha_{qq}) \in \Delta_{q^2}$, we have
 $\Pr(N = \alpha n) \asymp \exp(-nD(\alpha, p))$; this just follows from Stirling's approximation.
 Next, note that $D(\alpha, p)$ is zero only
 for $\alpha = p$, and that $D(\alpha, p)$ is strongly concave in $\alpha$. Therefore, $\lambda < 1$ implies
 that there is some $\epsilon > 0$ such that
 \[
  D(\alpha, p) \ge (1+\epsilon) (\alpha - p)^T A (\alpha - p) + \epsilon |\alpha - p|^2
 \]
 for all $\alpha \in \Delta_{q^2}(p)$. Hence, any $\alpha \in \Delta_{q^2}(p)$ satisfies
 \begin{equation}\label{eq:binom-ui-2}
  \Pr(N = \alpha n) \exp(n (1 + \epsilon)(\alpha - p)^T A (\alpha-p)) \le C \exp(-n\epsilon |\alpha - p|^2).
 \end{equation}
 Recalling the definition of $\Omega_n$, we write (with a slight abuse of notation)
 $\alpha \in \Omega_n$ if $|\max_i \sum_j \alpha_{ij} - p_i| \le n^{-1} a_n$ and similarly with $i$ and $j$ reversed.
 Note that for every $\alpha \in \Omega_n$, there is some $\tilde \alpha \in \Delta_{q^2}(\pi)$ with
 $|\alpha - \tilde \alpha|^2 = o(n^{-1})$; in particular,~\eqref{eq:binom-ui-2}
 also holds for all $\alpha \in \Omega_n$ (with a change in the constant $C$).
 Then
 \begin{align*}
  \E [\mathbbm{1}_{\Omega_n} \exp((1 + \epsilon) X^T A X)]
  &= \sum_{\alpha \in \Omega_n} \Pr(N = n \alpha) \exp\left(n (1+\epsilon) (\alpha - p)^T A (\alpha - p)\right) \\
  &\le \sum_{\alpha \in \Omega_n} \exp\left(-n \epsilon |\alpha - p|^2\right) \\
  &\le C < \infty,
 \end{align*}
 for some constant $C$ independent of $n$, where the last line follows from Lemma~\ref{lem:UI-multinomial}.
 In particular, $\exp(X^T A X)$ has $1+\epsilon$ uniformly bounded moments, and so it is uniformly integrable
 as $n \to \infty$. Since $X \toD \normal(0, \diag(p) - p p^T)$, it follows that
 $\E \exp(X^T A X) \to \E \exp(X^T A X)$.

 In the other direction, if $\lambda > 1$ then there is some $\alpha \in \Delta_{q^2}(p)$, $\alpha \ne p$ and some
 $\epsilon > 0$ such that
 $D(\alpha, p) \le (\alpha - p)^T A (\alpha - p) - 2\epsilon$. By the continuity of $D(\alpha, p)$ and
 $(\alpha - p)^T A (\alpha - p)$, we see that for sufficiently large $n$, there exists $r \in n\Delta_{q^2}(p)$ such that
 \[
  D(r/n, p) \le (r/n - p)^T A (r/n - p) - \epsilon.
 \]
 For any $n$, let $r^* = r^*(n)$ be such an $r$. Then
 \begin{align*}
  \E \exp(X^T A X)
  &\ge \Pr(N = r^*(n)) \exp\left(n (r^*/n - p)^T A (r^*/n - p)\right) \\
  &\asymp \exp\left(n \left( (r^*/n - p)^T A (r^*/n - p) - D(r^*/n, p)\right)\right) \\
  &\ge \exp\left(n \epsilon\right) \to \infty.
 \end{align*}
\end{proof}


\end{document}